\allowdisplaybreaks \numberwithin{equation}{section}
\numberwithin{equation}{section}
\newtheorem{theorem}{Theorem}[section]
\newtheorem{proposition}[theorem]{Proposition}
\newtheorem{lemma}[theorem]{Lemma}
\theoremstyle{definition}
\newtheorem{definition}[theorem]{Definition}
\theoremstyle{remark}
\newtheorem{remark}[theorem]{Remark}
\begin{document}

\title
{Desingularization of Vortex Rings in 3 dimensional Euler Flows: with swirl}

 \author{Daomin Cao, Jie Wan,  Weicheng Zhan}

\address{Institute of Applied Mathematics, Chinese Academy of Sciences, Beijing 100190, and University of Chinese Academy of Sciences, Beijing 100049,  P.R. China}
\email{dmcao@amt.ac.cn}
\address{Institute of Applied Mathematics, Chinese Academy of Sciences, Beijing 100190, and University of Chinese Academy of Sciences, Beijing 100049,  P.R. China}
\email{wanjie15@mails.ucas.edu.cn}
\address{Institute of Applied Mathematics, Chinese Academy of Sciences, Beijing 100190, and University of Chinese Academy of Sciences, Beijing 100049,  P.R. China}
\email{zhanweicheng16@mails.ucas.ac.cn}


\begin{abstract}
We study desingularization of steady vortex rings in three-dimensional axisymmetric incompressible Euler fluids with swirl. Using the variational method, we construct a two-parameter family of steady vortex rings, which constitute a desingularization of the classical circular vortex filament, in several kinds of domains. The precise localization of the asymptotic singular vortex filament is shown to depend on the circulation and the velocity at far fields of the vortex ring and the geometry of the domains. We also discuss other qualitative and asymptotic properties of these vortices.
\end{abstract}

\maketitle

\section{Introduction}

In this paper, we study the three-dimensional axisymmetric Euler flows with swirl. The motion of general incompressible steady Euler fluid in $\mathbb{R}^3$ is as follows

\begin{equation}\label{1-1}
~(\mathbf{v}\cdot\nabla)\mathbf{v}=-\nabla P,
\end{equation}
\begin{equation}\label{1-2}
 \nabla\cdot\mathbf{v}=0,
\end{equation}
where $\mathbf{v}=[v_1,v_2,v_3]$ is the velocity field and $P$ is the scalar pressure. Let $\{\mathbf{e}_r, \mathbf{e}_\theta, \mathbf{e}_z\}$ be the usual cylindrical coordinate frame. Then if the velocity field $\mathbf{v}$ is axisymmetric, i.e., $\mathbf{v}$ does not depend on the $\theta$ coordinate, it can be expressed in the following way
\begin{equation*}
  \mathbf{v}=v^r(r,z)\mathbf{e}_r+v^\theta(r,z)\mathbf{e}_\theta+v^z(r,z)\mathbf{e}_z.
\end{equation*}
 The component $v^\theta$ in the $\mathbf{e}_\theta$ direction is called the swirl velocity. Let $\pmb{\omega}:=\nabla\times\mathbf{v}$ be the corresponding vorticity field. We shall refer to the axisymmetric flows  as ``vortex rings with swirl" if there is a toroidal region inside of which the $\theta$-component of velocity $v^\theta$ and of vorticity $\omega^\theta$ are nonzero, and outside of which the flow is irrotational.

The study of vortex rings with swirl, which arises from aerodynamics and hydrodynamics, can be traced back to Hicks in 1899 (see \cite{H}). Compared with vortex rings without swirl\,($v^\theta\equiv 0$)\,which have been studied extensively and deeply, the dynamics of vortex rings with swirl seems to have received relatively less attention. However, most vortical structures in transitional and turbulent flows seem to have a swirl component (refer to \cite{Virk94}). Vortex rings with swirl are of interest for a number of reasons. The interactions of the flow velocities $v^r$ and $v^z$ in the axial planes with the swirl velocity $v^\theta$ usually lead to some interesting phenomena. It is often viewed as an important model for understanding turbulence physics and controlling turbulence phenomena (see \cite{Na14}). This flow may also enhance our understanding of axisymmetric vortex breakdown (see, e.g.,  \cite{BS, Lu2001, WR}). Moreover, in contrast with the non-swirl\,($v^\theta\equiv 0$)\,Euler flows, flows with swirl may have nonzero helicity, which directly related to the topological invariance of the corresponding vorticity field (see \cite{Ban16, M}).

The only explicit solution for vortices with swirl was first found by Hicks \cite{H} and rediscovered by Moffatt \cite{M}. In \cite{M2}, Moffatt adopted magnetic relaxation methods to obtain a wide family of vortex rings with swirl. In the study of the existence of vortex rings with swirl, the problem is often translated into the following nonlinear elliptic equation (called the Bragg-Hawthorne equation \cite{BH})
\begin{equation}\label{1-3}
\mathcal{L}\psi:=-\frac{1}{r}\frac{\partial}{\partial r}\Big(\frac{1}{r}\frac{\partial\psi}{\partial r}\Big)-\frac{1}{r^2}\frac{\partial^2\psi}{\partial z^2}=-B'(\psi)+\frac{1}{r^2}H(\psi)H'(\psi)
\end{equation}
with suitable boundary conditions. Here the Bernoulli function (or dynamic pressure) $B:=\frac{1}{2}|\mathbf{v}|^2+P$ and $H:=rv^{\theta}$ are two prescribed functions of $\psi$. This family of equations is also sometimes referred to as the Long-Squire equation (see \cite{Long53, Sq56}). It is usually known as the Grad-Shafranov equation in plasma physics (see \cite{An98}). Once the Stokes stream function $\psi$ is obtained, one can easily construct the corresponding solutions of the original or primitive variables $(\mathbf{v}, P)$ and vice versa. In \cite{EM}, Elcrat and Miller developed an iterative procedure to solve the nonlinear equation $\eqref{1-3}$. Using this monotone method one may construct a family of vortex rings with swirl based on arbitrary solutions of vortex rings without swirl. In addition, the authors also investigated vortex breakdown phenomenon in a bounded channel. Recently, Frewer et al. studied the Bragg-Hawthorne equation via a Lie point symmetry analysis \cite{Fre07}. For numerical and experimental studies of vortex rings with swirl, see \cite{Ch10, EM2, ET, Ga15, Na14, Virk94} and references therein. \cite{Lim95, MGT, Sa92} are some good historical reviews of the achievements in experimental, analytical, and numerical studies of vortex rings.

In this paper, we are concerned with the nonlinear desingularization of vortex rings with swirl. Limited work has been done in this aspect. In \cite{Tur89}, Turkington constructed a two-parameter family of desingularized steady solutions in a bounded domain and in the whole space respectively. His key idea was based on a reformulation of the steady equations in terms of the modified azimuthal vorticity $\zeta:=\omega^\theta/r$ alone.  In \cite{Ta2}, Tadie considered Euler flows outside infinite cylinders and investigated the asymptotic behaviour by letting the flux diverge. In the present work, we follow Turkington's idea to construct a family of steady vortex rings, which constitute a desingularization of the classical circular vortex filament, in several kinds of domains. However, we would like to point out that the method we adopt here is different in some important aspects from the one Turkington proposed in \cite{Tur89}. Notice that in order to obtain a family of desingularized solutions in the whole space, Turkington imposed some additional constrains which cause the velocities of the flows at far fields became Lagrange multipliers and hence were undetermined (see~Theorem 2 of \cite{Tur89}). However, the velocities at infinity of our solutions are determined. In the case of exterior domains, one cannot obtain the compactness by using embedding of sets
of symmetric functions as in \cite{Tur89}. Our method can effectively circumvent this difficulty (see Section 4 below). In addition, the precise localization of the asymptotic singular vortex filament is shown to depend on the circulation and the velocity at far fields of the vortex ring, and the geometry of the domains. Our work here may be thought of as an extension of Turkington's work.

 Inspired by the variational principle proposed by Turkington (see (VP2) of \cite{Tur89}), we can transform the problem into a constrained maximization problem in $\zeta$, the solutions of which define the desired steady vortex rings with swirl. For simplicity, we will restrict our discussion to the ``classical'' case of $B$ and $H$ linearly dependent upon the stream function $\psi$. It is the simplest choice of $B$ and $H$, so that one can analyze the model without many difficulties whether in theory or in numerical computation (For numerical investigations, see \cite{EM2,ET}). The general case can be treated similarly; see \cite{Tur89} for more discussion. Hence we now turn to the problem of energy maximization on proper admissible class. To maximize the objective (energy) functional, those vortices constructed have to be concentrated. This method can also work on the problems of three-dimensional axisymmetric vortex rings without swirl (see, e.g., \cite{Be,B1,CWZ,D,Dou2,FT}). This work can be viewed as a continuation of the recent work \cite{CWZ} about vortex rings without swirl.

The paper is organized as follows. In section 2, we briefly deduce the system of vortex rings with swirl and then state the main results. In section 3, we study vortex rings in an infinite pipe. In section 4, we consider vortex rings outside a ball. In section 5, we briefly investigate vortex rings in the whole space.

\section{Main results}
Throughout the sequel we shall use the following notations: Let $\Pi=\{(r,z)~|~r>0, z\in\mathbb{R}\}$ denote a meridional half-plane ($\theta$=constant); $B_\delta(y)$ denotes an open ball in $\mathbb{R}^2$ of center $y$ and radius $\delta>0$; Let $\chi_{_A}$ denote the characteristic function of $A\subseteq\Pi$; $f_+$ denotes the positive parts of $f$; Lebesgue measure on $\mathbb{R}^N$ is denoted $\textit{m}_N$, and is to be understood as the measure defining any $L^p$ space and $W^{1,p}$ space, except when stated otherwise; $\nu$ denotes the measure on $\Pi$ having density $r$ with respect to $\textit{m}_2$ and $|\cdot|$ denotes the $\nu$-measure.

In the following, we first deduce the system of vortex rings with swirl in brief. For more details, we refer to \cite{Tur89}.

By system \eqref{1-1}-\eqref{1-2}, the axisymmetric 3D Euler flows can be characterized as follows
\begin{numcases}
{}
\label{1-4} \mathbf{v}\cdot\nabla v^z=-\partial_z P,  &\\
\label{1-5} \mathbf{v}\cdot\nabla v^r-\frac{1}{r}(v^{\theta})^2=-\partial_r P,  &\\
\label{1-6} \mathbf{v}\cdot\nabla v^{\theta}+\frac{1}{r}v^r v^{\theta}=0,  &\\
\label{1-7} \partial_z v^z+\frac{1}{r}\partial_r(rv^r)=0,  &
\end{numcases}
where $\mathbf{v}\cdot\nabla=v^r\partial_r+v^z\partial_z$.
The vorticity field $\pmb{\omega}=(\omega^r,\omega^{\theta},\omega^z)$ is given by
\begin{equation}\label{1-8}
   \omega^r=-\partial_z v^{\theta}, \ \  \omega^{\theta}=\partial_z v^r-\partial_r v^z,\ \  \omega^z=\frac{1}{r}\partial_r (r v^{\theta}).
\end{equation}
By the continuity equation \eqref{1-7}, we can find a Stokes stream function $\psi=\psi(r,z)$ such that
\begin{equation}\label{1-9}
   v^r=-\frac{1}{r}{\partial_z\psi},\ \  \  v^z=\frac{1}{r}{\partial_r\psi}.
\end{equation}
Let $\zeta:={\omega^{\theta}}/{r}$ and $\xi:=r v^{\theta}$. By \eqref{1-8} and \eqref{1-9}, we get $\zeta= \mathcal{L}\psi$.
It follows from \eqref{1-4} and \eqref{1-5} that
\begin{equation*}
\mathbf{v}\cdot\nabla\zeta-2r^{-4}\xi\partial_z\xi=0.
\end{equation*}
And \eqref{1-6} is equivalent to
\begin{equation*}
\mathbf{v}\cdot\nabla\xi=0.
\end{equation*}
Therefore the triple $(\psi, \xi, \zeta)$ satisfies the following system
\begin{numcases}
{}
\label{1-16} \partial(\psi,\xi)=0,  &\\
\label{1-17} \partial(\zeta,\psi)+\partial(\xi,\frac{\xi}{r^2})=0,  &
\end{numcases}
where $\partial(\cdot,\cdot)$ denotes the determinant of the gradient matrix, namely, $$\partial(f,g)=\partial_rf\partial_zg-\partial_zf\partial_rg.$$

Now we reduce this system to a single equation of $\zeta$. Formally, if there exists some function $b(t)$ satisfying
\begin{equation}\label{1-18}
\psi=b(\xi),
\end{equation}
then the pair $(\psi, \xi)$ satisfies \eqref{1-16}.
Substitution of  \eqref{1-18} into \eqref{1-17} yields to
\begin{equation*}
0=\partial (\zeta, b(\xi))+\partial (\xi, \frac{\xi}{r^2})=\partial (\zeta b'(\xi)-\frac{\xi}{r^2}, \xi).
\end{equation*}
Thus, if there exists some function $a(t)$ such that
\begin{equation}\label{1-19}
\zeta b'(\xi)-\frac{\xi}{r^2}=-a'(\xi),
\end{equation}
then the triple $(\psi,\xi,\zeta)$ also satisfies \eqref{1-17}. Recall that $\mathcal{L}\psi=\zeta$, consequently $\psi$ is determined by $\zeta$ alone.
Taking \eqref{1-18} into \eqref{1-19}, we get a single equation of $\zeta$, namely,
\begin{equation}\label{1-20}
\zeta=-\frac{a'(b^{-1}(\psi))}{b'(b^{-1}(\psi))}+\frac{b^{-1}(\psi)}{r^2 b'(b^{-1}(\psi))}.
\end{equation}
Note that this model is consistent with the Brag-Hawthorne equation \eqref{1-3} with $B(\psi)={a(b^{-1}(\psi))}$ and $H(\psi)=b^{-1}(\psi)$.

As in \cite{Tur89}, we will restrict our discussion to the classical case when
\begin{equation*}
  a(t)=-\alpha t,\ \   b(t)=\beta t\ \ \text{for given positive numbers}\  \alpha\ \text{and}\ \beta.
\end{equation*}
The general case can be treated similarly; see \cite{Tur89} for more discussion. Since the solutions we considered here are discontinuous (see Theorems \ref{thm1} and \ref{thm2} below), it is natural to understand the system \eqref{1-16}-\eqref{1-17} in some weak sense.

Let $U\subseteq \mathbb{R}^3$ be an axisymmetric domain about the $z$ axis. Let $D=U\cap\Pi$.
\begin{definition}
  Let $(\psi,\xi,\zeta)\in W^{2,p}_{\text{loc}}(D)\times W^{1,p}_{\text{loc}}(D)\times L^p_{\text{loc}}(D)$ for some $p>2$. We say that $(\psi,\xi,\zeta)$ is a weak solution of the system \eqref{1-16}-\eqref{1-17} if for any $\varphi\in C_0^\infty(D)$,
\begin{equation}\label{1-22}
  \int_D \partial(\psi,\xi)\varphi drdz=0,
\end{equation}
and
\begin{equation}\label{1-23}
  \int_D \big[\zeta \partial(\psi,\varphi)-\frac{1}{2r^2}\partial(\xi^2,\varphi)\big]drdz=0.
\end{equation}
\end{definition}

In the present paper, we mainly consider the following three types of domains. We note that our method can also be applied to more general domains.
\begin{definition}
The set $D$ is admissible if it is one of the following three types
\begin{itemize}
  \item[(a)]  $\{(r,z)\in \Pi~|~0<r<d\}$\  for some $d>0$,
  \item[(b)]  $\{(r,z)\in \Pi ~|~ r^2+z^2>d^2\}$\ for some $d>0$,
  \item[(c)]  $\Pi$.
\end{itemize}
\end{definition}

 We define the inverse of $\mathcal{L}$ as follows. One can check that the operator $\mathcal{K}$ is well-defined; see, e.g., \cite{BB,B1}.
\begin{definition}
Let D be admissible. The Hilbert space $H(D)$ is the completion of $C_0^\infty(D)$ with the scalar products
\begin{equation*}
  \langle u,v\rangle_H=\int_D\frac{1}{r^2}\nabla u\cdot\nabla v d\nu.
\end{equation*}
We define inverses $\mathcal{K}$ for $\mathcal{L}$ in the weak solution sense,
\begin{equation}\label{2-1}
  \langle \mathcal{K}u,v\rangle_H=\int_D uv d\nu \ \ \ for\  all\  v\in H(D), \ \ when \ u \in  L^{10/7}(D,r^3drdz).
\end{equation}
\end{definition}

Let $K(r,z,r',z')$ be the Green's function of $\mathcal{L}$ in $D$, with respect to zero Dirichlet data and measure $rdrdz$. One can easily show that the operator $\mathcal{K}$ is an integral operator with kernel $K(r,z,r',z')$ for all cases considered in this paper. We shall use this Green's representation formula directly without further explanation.

As in \cite{DV}, for an axisymmetric set $A\subseteq \mathbb{R}^3$ we introduce the axisymmetric distance as follows
\begin{equation*}
  dist_{\mathcal{C}_r}(A)=\sup_{x\in A}\inf_{x'\in{\mathcal{C}_r}}|x-x'|,
\end{equation*}
where $\mathcal{C}_r:=\{x\in\mathbb{R}^3~|x_1^2+x_2^2=r^2,z=0\}$ for some $r>0$.

We define the circulation of a vortex as follows
\begin{equation*}
  \kappa(\zeta)= \int_D \zeta d\nu.
\end{equation*}

Having made these preparations, we now state the main results. Our first result is on the existence of desingularized vortex rings in infinite cylinders.
\begin{theorem}\label{thm1}
Let $U=\{(r,\theta,z)\in \mathbb{R}^3~|~ r<d\}$ for some $d>0$ and let $D=U\cap\Pi$. Then for every $W>1/(4\pi d)$, $\alpha\ge0$ and all sufficiently small $\beta>0$, there exists a weak solution $(\psi_{_\beta},\xi_{_\beta},\zeta_{_\beta})$ of system \eqref{1-16}-\eqref{1-17} with the following properties:
\begin{itemize}
\item[(i)]For any $p>1$, $0<\gamma<1$, $\psi_{_\beta}\in W^{2,p}_{\text{loc}}(D)\cap C^{1,\alpha}(\bar{D})$ and satisfies
\begin{equation*}
  \mathcal{L}\psi_{_\beta}=\zeta_{_\beta}\ \ \text{a.e.} \ \text{in} \ D.
\end{equation*}

\item[(ii)] $(\psi_{_\beta},\xi_{_\beta},\zeta_{_\beta})$ is of the form
\begin{equation*}
      \psi_{_\beta}=\mathcal{K}\zeta_{_\beta}-\frac{Wr^2}{2}\log{\frac{1}{\beta}}-\mu_{_\beta},\ \ \xi_{_\beta}=\frac{1}{\beta}(\psi_{_\beta})_+,\ \ \zeta_{_\beta}=\frac{1}{r^2\beta^2}{(\psi_{_\beta})}_{+}+\frac{\alpha}{\beta}\chi_{_{\{\psi_{_\beta}>0\}}},
\end{equation*}
for some $\mu_{_\beta}>0$ depending on $\beta$. Furthermore,
\begin{equation*}
  \kappa(\zeta_{_\beta})=1\ \  \text{and}\ \ \ \xi_{_\beta} \le \frac{R_0}{\beta},
\end{equation*}
for some positive constant $R_0$ independent of $\beta$.

 \item[(iii)]Let $\Omega_\beta:=supp(\xi_{_\beta})=supp(\zeta_{_\beta})$, there exists some constant $R_1>1$ independent of $\beta$ such that
 \begin{equation*}
   diam(\Omega_\beta)\le R_1\beta.
 \end{equation*}
 Moreover,
 \begin{equation*}
   \lim_{\lambda \to +\infty}dist_{\mathcal{C}_{r_{_*}}}(\Omega_\beta)=0,
\end{equation*}
where $r_*=\frac{1}{4\pi W}$.
Furthermore, as $\beta \to 0^+$,
 $$\mu_{_\beta}=\frac{3}{32}\frac{1}{\pi^2W}\log{\frac{1}{\beta}}+O(1).$$
\item[(iv)] Let $$\mathbf{v}_{_\beta}=\frac{1}{r}\Big(-\frac{\partial\psi_{_\beta}}{\partial z}\mathbf{e}_r+\frac{1}{\beta}(\psi_{_\beta})_+\mathbf{e}_\theta+\frac{\partial\psi_{_\beta}}{\partial r}\mathbf{e}_z\Big),$$ then
\begin{equation*}
\begin{split}
    & \mathbf{v_{_\beta}}\cdot\mathbf{n}=0 \  \ \text{on}\ \partial U,\\
    & \mathbf{v}_{_\beta}\to -W\log\frac{1}{\beta}~\mathbf{e}_z\ \  \text{at}\ \infty,
\end{split}
\end{equation*}
where $\mathbf{n}$ is the unit outward normal of $\partial U$. Moreover, as $r\to 0$,
\begin{equation*}
     \frac{1}{r}\frac{\partial\psi_{_\beta}}{\partial z}\to 0,\ \ \frac{1}{r\beta}(\psi_{_\beta})_+\to 0\ \text{and}~ \ \frac{1}{r}\frac{\partial\psi_{_\beta}}{\partial r}\  \text{approaches a finite limit}.
\end{equation*}
\item[(v)]Let the center of vorticity be
\begin{equation*}
  X_\beta=\int_D x\zeta_{_\beta}(x)d\textit{m}_2(x),
\end{equation*}
and define the rescaled version of $\zeta_{_\beta}$ to be
\begin{equation*}
  g_{_\beta}(x)={\beta^2}\zeta_{_\beta}(X_\beta+\beta x).
\end{equation*}
Then every accumulation points as $\beta \to 0^+$, in the weak topology of $L^2$, of $\{g_{_\beta}:\beta>0\}$ are radially nonincreasing functions.
\end{itemize}
\end{theorem}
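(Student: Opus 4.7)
I would follow Turkington's variational strategy, adapted so that the Euler--Lagrange equation reproduces the affine-plus-step profile of (ii) and so that the axial velocity at infinity is prescribed rather than an undetermined Lagrange multiplier. Concretely, on the admissible class
\begin{equation*}
  \mathcal{A}_\beta=\Bigl\{\zeta\in L^\infty(D):\ \zeta\ge 0,\ \kappa(\zeta)=1,\ \|\zeta\|_\infty\le\Lambda\beta^{-2}\Bigr\},
\end{equation*}
maximise
\begin{equation*}
  E_\beta(\zeta)=\frac{1}{2}\int_D \zeta\,\mathcal{K}\zeta\,d\nu - \frac{\beta^2}{2}\int_D r^2\Bigl(\zeta-\tfrac{\alpha}{\beta}\Bigr)_+^2 d\nu - \frac{W}{2}\log\tfrac{1}{\beta}\int_D r^2\zeta\,d\nu,
\end{equation*}
with $\Lambda$ a large constant that turns out to be inactive in the limit. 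The first term is the kinetic-energy quadratic form, the second a convex penalty whose sub-differential reproduces the swirl profile, and the third is designed so that $\psi_\beta\sim -\tfrac{Wr^2}{2}\log(1/\beta)$ at $|z|\to\infty$. The hypothesis $W>1/(4\pi d)$ guarantees $r_*=1/(4\pi W)<d$, so the effective potential $\tfrac{1}{4\pi r}-\tfrac{Wr}{2}$ attains its maximum strictly inside $(0,d)$.

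A maximiser $\zeta_\beta\in\mathcal{A}_\beta$ exists by weak $L^2(D,d\nu)$-compactness together with compactness of the quadratic form via the $L^{10/7}\!\to\!H$ mapping property of $\mathcal{K}$. A chord variation with $\eta\in\mathcal{A}_\beta$ yields, with Lagrange multiplier $\mu_\beta$ for $\kappa(\zeta)=1$, the identities
\begin{equation*}
  \psi_\beta:=\mathcal{K}\zeta_\beta-\tfrac{Wr^2}{2}\log\tfrac{1}{\beta}-\mu_\beta,\qquad \zeta_\beta=\tfrac{1}{r^2\beta^2}(\psi_\beta)_++\tfrac{\alpha}{\beta}\chi_{\{\psi_\beta>0\}},
\end{equation*}
provided $\Lambda$ is large enough that the upper constraint is inactive. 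This is (ii); (i) follows from standard elliptic regularity applied to $\mathcal{L}\psi_\beta=\zeta_\beta$ with Dirichlet data. Conditions \eqref{1-22}--\eqref{1-23} are verified directly from (ii) using $\mathbf{v}_\beta\cdot\nabla\psi_\beta=0$ and $\mathbf{v}_\beta\cdot\nabla\xi_\beta=0$ on $\{\psi_\beta>0\}$.

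For (iii), the support obeys $\nu(\Omega_\beta)\lesssim\beta^2$ (from $\kappa(\zeta_\beta)=1$ combined with the pointwise bound $\sup\zeta_\beta\lesssim\beta^{-2}$ coming from the profile and $\sup\psi_\beta\lesssim 1$), which together with the profile gives $\xi_\beta\le R_0/\beta$. A Moser-type bootstrap based on the Green's function expansion $K(r,z,r',z')=-\tfrac{r}{2\pi}\log|(r,z)-(r',z')|+\Gamma(r,z,r',z')$ then yields $\operatorname{diam}(\Omega_\beta)\le R_1\beta$. Inserting this into $E_\beta(\zeta_\beta)$ and matching with the energy of a model disc at radial position $r_\beta$ produces
\begin{equation*}
  E_\beta(\zeta_\beta)=\Bigl(\tfrac{1}{4\pi r_\beta}-\tfrac{Wr_\beta}{2}\Bigr)\log\tfrac{1}{\beta}+O(1),
\end{equation*}
so optimality forces $r_\beta\to r_*$ and the matching returns $\mu_\beta=\tfrac{3}{32\pi^2W}\log(1/\beta)+O(1)$. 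Part (iv) is then immediate: $\psi_\beta|_{\partial U}=0$ gives $\mathbf{v}_\beta\cdot\mathbf{n}=0$; $\mathcal{K}\zeta_\beta\to 0$ as $|z|\to\infty$ together with the background $-\tfrac{Wr^2}{2}\log(1/\beta)$ gives the axial limit; and $\psi_\beta=O(r^2)$ near $r=0$, forced by $\psi_\beta\in H(D)$, gives the three axis limits.

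For (v), rescaling by $\beta$ around $X_\beta$ converts the profile equation into a limit equation on $\mathbb{R}^2$ with kernel $-\tfrac{1}{2\pi}\log|x|$; the a priori bounds of (iii) yield weak-$L^2$ compactness of $\{g_\beta\}$, and any weak accumulation point $g$ inherits a variational characterisation in the limit problem of the form $g=c_1\tilde\psi_++c_2\chi_{\{\tilde\psi>0\}}$, so a Riesz--Schwarz rearrangement argument forces $g$ to be radially nonincreasing after translation. \emph{The main obstacle} is the quantitative diameter bound and the expansion of $\mu_\beta$ in (iii): they require a sharp two-sided matching between $E_\beta(\zeta_\beta)$ and the energy of a model concentrated disc, using the full diagonal expansion of $K$ on the infinite cylinder, and it is precisely in this matching that the hypothesis $W>1/(4\pi d)$ enters to keep the vortex support away from the boundary $r=d$.
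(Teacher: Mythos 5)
Your overall architecture (maximize the same penalized energy over an $L^\infty$-truncated class, read off the profile from the Euler--Lagrange/bathtub conditions, then locate and shrink the support via the Green's function expansion) is the paper's strategy, but the sketch contains a genuine error at the heart of the localization step. Your ``effective potential'' is written as $\tfrac{1}{4\pi r}-\tfrac{Wr}{2}$, and correspondingly you claim $E_\beta(\zeta_\beta)=\bigl(\tfrac{1}{4\pi r_\beta}-\tfrac{Wr_\beta}{2}\bigr)\log\tfrac1\beta+O(1)$. With the circulation normalized by $\int_D\zeta\,r\,dr\,dz=1$, the kernel expansion $K\approx\tfrac{\sqrt{rr'}}{2\pi}\log\tfrac1\sigma$ gives instead $E_\beta(\zeta_\beta)=\bigl(\tfrac{r_\beta}{4\pi}-\tfrac{W r_\beta^2}{2}\bigr)\log\tfrac1\beta+O(1)$, i.e.\ the reduced function is $\Gamma_1(t)=\tfrac{t}{2\pi}-Wt^2$ (for $\mu_\beta$, half of it for the energy). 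Your function $\tfrac{1}{4\pi r}-\tfrac{Wr}{2}$ is strictly decreasing on $(0,\infty)$ and blows up as $r\to0^+$, so it has no interior maximum at $r_*=1/(4\pi W)$; the argument ``optimality forces $r_\beta\to r_*$'' does not follow from your expansion, and neither does $\mu_\beta=\tfrac{3}{32\pi^2W}\log\tfrac1\beta+O(1)$, which you essentially assert to match the statement. Relatedly, your claim that $\nu(\Omega_\beta)\lesssim\beta^2$ follows ``from $\kappa(\zeta_\beta)=1$ combined with $\sup\zeta_\beta\lesssim\beta^{-2}$'' is backwards: those facts yield $\nu(\Omega_\beta)\gtrsim\beta^2$, and the upper (diameter) bound is exactly what requires the Turkington-type argument splitting the potential at scale $R\beta$ and using the $\mu_\beta$ expansion; also the bound $\xi_\beta\le R_0/\beta$ comes from a uniform bound on $(\psi_\beta)_+$, not from a measure bound.

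Two further steps you treat as routine are in fact where the paper does real work. First, on the infinite cylinder the admissible class is unbounded in $z$, so a maximizer does not come from ``weak $L^2$-compactness plus the $L^{10/7}\to H$ mapping property'' alone: mass can escape to $|z|\to\infty$, and the paper restores compactness by Steiner symmetrization in $z$ (Douglas' lemmas); your class also imposes $\kappa(\zeta)=1$ as an equality, which changes the multiplier structure (sign of $\mu_\beta$, compact support) compared with the paper's inequality constraint and bathtub principle. Second, the assertion that the truncation $\Lambda\beta^{-2}$ is ``inactive for $\Lambda$ large'' is precisely the content of the paper's Lemma 3.13 and Proposition 3.14: one needs $(\psi_{\beta,\Lambda})_+\le C_1\log\Lambda+C_2$ with constants uniform in both $\beta$ and $\Lambda$ (which in turn uses the $\Lambda$-uniform energy and $\mu$ estimates and the fact that the support stays near $r_*>0$), played against $(\psi_{\beta,\Lambda})_+\ge c\Lambda-C$ on the patch set. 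Without this, the profile in (ii), the bound $\xi_\beta\le R_0/\beta$, and hence (iii)--(v) are not established. Finally, for the weak formulation \eqref{1-23} the troublesome term is $\tfrac{\alpha}{\beta}\chi_{\{\psi_\beta>0\}}\partial(\psi_\beta,\varphi)$, which the paper handles by Burton's lemma on $\mathrm{div}(\zeta\nabla^{\perp}\psi)$ for $\zeta$ a monotone function of $\psi$; a pointwise statement like $\mathbf{v}_\beta\cdot\nabla\xi_\beta=0$ is not enough since $\chi_{\{\psi_\beta>0\}}$ is discontinuous.
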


\begin{remark}
Kelvin and Hicks showed that if the vortex ring with circulation $\kappa$ has radius $r_*$ and its cross-section $\varepsilon$ is small, then the vortex ring moves at the velocity (see,e.g.,\cite{Lamb}).
\begin{equation}\label{KH}
  \frac{\kappa}{4\pi r_*}\Big(\log \frac{8r_*}{\varepsilon}-\frac{1}{4}\Big).
\end{equation}
One can see that our result is consistent with the Kelvin-Hicks formula.
\end{remark}

Our second main result is devoted to the construction of desingularized vortex rings outside a ball.
\begin{theorem}\label{thm2}
Let $U=\{(r,\theta,z)\in \mathbb{R}^3~|~r^2+z^2>d^2\}$ for some $d>0$ and let $D=U\cap\Pi$. Then for every $W<1/(6\pi d)$, $\alpha\ge0$ and all sufficiently small $\beta>0$, there exists a weak solution $(\psi_{_\beta},\xi_{_\beta},\zeta_{_\beta})$ of system \eqref{1-16}-\eqref{1-17} with the following properties:
\begin{itemize}
\item[(i)]For any $p>1$, $0<\gamma<1$, $\psi_{_\beta}\in W^{2,p}_{\text{loc}}(D)\cap C_{\text{loc}}^{1,\gamma}(\bar{D})$ and satisfies
\begin{equation*}
  \mathcal{L}\psi_{_\beta}=\zeta_{_\beta}\ \ \text{a.e.} \ \text{in} \ D.
\end{equation*}

\item[(ii)] $(\psi_{_\beta},\xi_{_\beta},\zeta_{_\beta})$ is of the form
\begin{equation*}
\begin{split}
    \psi_{_\beta} & =\mathcal{K}\zeta_{_\beta}-\frac{Wr^2}{2}\log{\frac{1}{\beta}}+\frac{Wr^2d^3}{2(r^2+z^2)^{\frac{3}{2}}}\log\frac{1}{\beta}-\mu_{_\beta},\\
     \xi_{_\beta} &=\frac{1}{\beta}(\psi_{_\beta})_+,\ \ \zeta_{_\beta}  =\frac{1}{r^2\beta^2}{(\psi_{_\beta})}_{+}+\frac{\alpha}{\beta}\chi_{_{\{\psi_{_\beta}>0\}}},
\end{split}
\end{equation*}
for some $\mu_{_\beta}>0$ depending on $\beta$. Furthermore,
\begin{equation*}
  \kappa(\zeta_{_\beta})=1\ \  \text{and}\ \ \ \xi_{_\beta} \le \frac{R_0}{\beta},
\end{equation*}
for some positive constant $R_0$ independent of $\beta$.

\item[(iii)]Let $\Omega_\beta:=supp(\xi_{_\beta})=supp(\zeta_{_\beta})$, there exists some constant $R_1>1$ independent of $\beta$ such that
 \begin{equation*}
   diam(\Omega_\beta)\le R_1\beta.
 \end{equation*}
 Moreover,
 \begin{equation*}
   \lim_{\lambda \to +\infty}dist_{\mathcal{C}_{r_{_*}}}(\Omega_\beta)=0,
\end{equation*}
where $r_*\in [d,+\infty)$ satisfies $\Gamma_2(r_*)=\max_{t\in[d,+\infty)}\Gamma_2(t)$ and
\begin{equation*}
    \Gamma_2(t):=\frac{t}{2\pi}-Wt^2+\frac{Wd^3}{t},\ t\in(0,+\infty).
\end{equation*}
Furthermore, as $\beta \to 0^+$,
 $$ \mu_{_\beta} =(\frac{r_*}{2\pi}-\frac{Wr_*^2}{2}+\frac{Wd^3}{2r_*})\log{\frac{1}{\beta}}+O(1).$$
\item[(iv)]  Let $$\mathbf{v}_{_\beta}=\frac{1}{r}\Big(-\frac{\partial\psi_{_\beta}}{\partial z}\mathbf{e}_r+\frac{1}{\beta}(\psi_{_\beta})_+\mathbf{e}_\theta+\frac{\partial\psi_{_\beta}}{\partial r}\mathbf{e}_z\Big),$$ then
\begin{equation*}
\begin{split}
    & \mathbf{v_{_\beta}}\cdot\mathbf{n}=0 \  \ \text{on}\ \partial U,\\
    & \mathbf{v}_{_\beta}\to -W\log\frac{1}{\beta}~\mathbf{e}_z\ \  \text{at}\ \infty,
\end{split}
\end{equation*}
where $\mathbf{n}$ is the unit outward normal of $\partial U$. Moreover, as $r\to 0$,
\begin{equation*}
     \frac{1}{r}\frac{\partial\psi_{_\beta}}{\partial z}\to 0,\ \ \frac{1}{r\beta}(\psi_{_\beta})_+\to 0\ \text{and}~ \ \frac{1}{r}\frac{\partial\psi_{_\beta}}{\partial r}\  \text{approaches a finite limit}.
\end{equation*}
\item[(v)]Let the center of vorticity be
\begin{equation*}
  X_\beta=\int_D x\zeta_{_\beta}(x)d\textit{m}_2(x),
\end{equation*}
and define the rescaled version of $\zeta_{_\beta}$ to be
\begin{equation*}
  g_{_\beta}(x)={\beta^2}\zeta_{_\beta}(X_\beta+\beta x).
\end{equation*}
Then every accumulation points as $\beta \to 0^+$, in the weak topology of $L^2$, of $\{g_{_\beta}:\beta>0\}$ are radially nonincreasing functions.
\end{itemize}
\end{theorem}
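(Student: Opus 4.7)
The plan is to realize $(\psi_\beta,\xi_\beta,\zeta_\beta)$ as critical points of a constrained maximization problem in the modified azimuthal vorticity $\zeta$, adapting Turkington's variational principle (VP2) of \cite{Tur89} to the exterior-of-ball geometry. First I would fix
\begin{equation*}
\Phi(r,z) = \tfrac{r^2}{2}\Bigl(1 - \tfrac{d^3}{(r^2+z^2)^{3/2}}\Bigr),
\end{equation*}
the axisymmetric stream function of uniform flow $-\mathbf{e}_z$ past the sphere of radius $d$, chosen so that $\mathcal{L}\Phi = 0$ in $D$, $\Phi\equiv 0$ on $\partial U$, and the induced velocity tends to $-\mathbf{e}_z$ at infinity. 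Built into (ii) as the coefficient $-W\log(1/\beta)\Phi$, this term makes both the boundary condition on $\partial U$ and the far-field condition of (iv) automatic once the representation for $\psi_\beta$ is established. The energy functional to be maximized then has the form
\begin{equation*}
E_\beta(\zeta) = \tfrac{1}{2}\int_D \zeta\,\mathcal{K}\zeta\,d\nu - \int_D J_\beta(r,\zeta)\,d\nu - W\log\tfrac{1}{\beta}\int_D \Phi\,\zeta\,d\nu
\end{equation*}
over the class $\mathcal{M} = \{\zeta\ge 0 : \int_D \zeta\,d\nu = 1\}$, where $J_\beta$ is a convex penalty in $\zeta$ tailored so that its subdifferential on $\{\zeta>0\}$ encodes the pointwise relation $\psi = \beta^2 r^2\zeta - \alpha\beta r^2$ underlying (ii); the Lagrange multiplier attached to the circulation constraint becomes $\mu_\beta$.

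Next I would execute the direct method. The principal new difficulty relative to Theorem \ref{thm1} is that $D$ is unbounded in both $z$ and $r$, so the radial $L^p$-compactness Turkington exploits in the pipe case is lost; this is the obstruction the authors flag in the introduction. I would bypass it by an a priori confinement argument showing that, along any maximizing sequence, the contribution of mass escaping either to $|z|\to\infty$ or to $r\to\infty$ stays strictly below the energy of a concentrated test vortex placed near an interior maximizer $r_*$ of $\Gamma_2$. The hypothesis $W<1/(6\pi d)$ enters exactly here: a direct computation gives $\Gamma_2'(d)=\tfrac{1}{2\pi}-3Wd>0$, so $\Gamma_2$ attains its supremum in the open interval $(d,\infty)$ rather than on $\partial U$; were $W\ge 1/(6\pi d)$, the putative filament would want to sit on the obstacle and desingularization could fail. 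Once confinement is secured in a $\beta$-independent annular region, weak upper semicontinuity of $\langle\mathcal{K}\zeta,\zeta\rangle_\nu$ there and weak-$L^2$ continuity of the linear terms produce a maximizer $\zeta_\beta$; testing against admissible perturbations together with the $L^p$-regularity of $\mathcal{K}$ then yields the representation in (ii) and the interior/boundary regularity of (i).

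The asymptotics in (iii), (iv) and (v) follow from matched upper and lower bounds on the optimal energy. Plugging in the test profile $\zeta^{\mathrm{test}} = (\pi\beta^2 r_*)^{-1}\chi_{B_\beta((r_*,0))}$, together with the diagonal expansion $K(r,z;r',z') \sim -\tfrac{r}{2\pi}\log|(r,z)-(r',z')|$ and the value $\Phi(r_*,0) = \tfrac{r_*^2}{2}-\tfrac{d^3}{2r_*}$, yields $E_\beta(\zeta_\beta) \ge \tfrac{1}{2}\Gamma_2(r_*)\log(1/\beta) + O(1)$. A matching upper bound, obtained via a symmetric-rearrangement inequality for $\mathcal{K}$ coupled with a scaling argument, forces $\Omega_\beta$ to lie in an $O(\beta)$-neighbourhood of $\mathcal{C}_{r_*}$ and pins the center of vorticity to the optimal circle, giving the diameter bound and localization statement. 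Reading the Euler-Lagrange equation at a point of the support then delivers the expansion $\mu_\beta = \bigl(\tfrac{r_*}{2\pi}-\tfrac{Wr_*^2}{2}+\tfrac{Wd^3}{2r_*}\bigr)\log(1/\beta)+O(1)$. For (v), rescaling around $X_\beta$ at scale $\beta$ reduces the limit problem to a semilinear equation of the form $g_\infty = c_1(\mathcal{G}_{\mathbb{R}^2}g_\infty - c_0)_+$ with $\mathcal{G}_{\mathbb{R}^2}$ the planar Newton kernel, and symmetric-rearrangement forces any weak-$L^2$ limit of $g_\beta$ to be radially nonincreasing. Item (iv) is then automatic: $\mathcal{K}\zeta_\beta$ vanishes on $\partial U$ and at infinity because $\zeta_\beta$ has compact support for each fixed $\beta$, while the prescribed Dirichlet and far-field data come from $\Phi$.

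The hardest step will be the confinement argument of the second paragraph: one has to quantify the trapping effect of the $\log(1/\beta)$-weighted linear potential $-\Phi$ on an unbounded non-convex exterior domain, so that the effective Kirchhoff-Routh function $\Gamma_2$ genuinely controls the location of the asymptotic filament. This replaces the symmetric-decreasing-rearrangement compactness of \cite{Tur89}, which is unavailable here because the image-dipole contribution from the sphere destroys the monotonicity structure needed for that argument.
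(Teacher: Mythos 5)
Your overall scheme coincides with the paper's: the background stream function $\Phi$ of uniform flow past the sphere (which is exactly the term $\frac{Wr^2}{2}\log\frac{1}{\beta}-\frac{Wr^2d^3}{2(r^2+z^2)^{3/2}}\log\frac{1}{\beta}$ in (ii)), the energy functional with the $\log\frac{1}{\beta}$-weighted linear terms, the function $\Gamma_2$ and the role of $W<1/(6\pi d)$ (indeed $\Gamma_2'(d)=\frac{1}{2\pi}-3Wd>0$, so $r_*>d$ is interior), the test-vortex lower bound, the multiplier expansion, and a rearrangement argument for (v). The genuine divergence, and the gap, is at the existence/compactness step. You maximize over a class with no restriction on supports and propose to rule out loss of mass by a confinement argument along maximizing sequences; this is precisely the hard point and the proposal does not supply it. In particular, escape of vorticity to $|z|\to\infty$ at bounded $r$ is not penalized by the linear potential at all, since $\Phi(r,z)\to r^2/2$ stays bounded along such sequences; the only loss is interaction energy, and turning that into tightness of maximizing sequences at \emph{fixed} $\beta$ requires a quantitative strict-subadditivity/concentration-compactness analysis which you acknowledge as ``the hardest step'' but do not give. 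Your comparison with a concentrated test vortex controls leading-order $\beta\to0^+$ asymptotics, not the existence of a maximizer for each fixed small $\beta$; moreover weak upper semicontinuity of the quadratic term is only available after confinement has been secured, so the argument as sketched is circular at that point.

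The paper circumvents this entirely and much more cheaply: it builds the bounded region $D_1=\{(r,z)\in D:\,0<r<r_*+1,\ |z|<d+1\}$ into the admissible class (supports constrained to $D_1$, together with the cap $0\le\zeta\le\Lambda/\beta^2$ and $\int_D\zeta\,d\nu\le1$), so that $K\in L^1(D_1\times D_1)$ renders the quadratic term weak-star continuous and a maximizer exists immediately; the $\Gamma_2$-asymptotics then show the support concentrates near $\mathcal{C}_{r_*}$ with $r_*>d$, hence at positive distance from $\partial D_1$, and a maximum-principle argument shows $\{\psi_{\beta,\Lambda}>0\}$ does not meet $D\setminus D_1$, so the artificial support constraint is inactive, the Euler--Lagrange form (ii) holds in all of $D$, and finally the cap $\Lambda$ is removed exactly as in the cylinder case. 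If you adopt this truncate-then-verify device, your plan goes through; as written, the confinement step is a missing proof rather than a technicality. Two smaller points: with your equality constraint $\int_D\zeta\,d\nu=1$ the positivity of $\mu_{_\beta}$ asserted in (ii) needs a separate argument (the paper uses the inequality constraint and deduces positivity from the energy lower bound and the multiplier identity); and for (v) the paper does not pass to a limiting semilinear equation but compares $E_\beta(\zeta_{_\beta})$ with the energy of the transplanted radial rearrangement and invokes the equality case of the Riesz rearrangement inequality (Burchard--Guo), whereas your route would additionally require proving that the rescaled profiles converge to a genuine solution of the limit equation.
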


\begin{remark}
Note that the velocity $W\log\frac{1}{\beta}$ is less than predicted by the  Kelvin-Hicks formula $\eqref{KH}$. This phenomenon is mainly due to the presence of obstacles. It seems that the formula $\eqref{KH}$ should be modified in this case. We do not enter into details here.
\end{remark}

Our last result is on the desingularization of vortices in the whole space.

\begin{theorem}\label{thm3}
Let $D=\Pi$. Then for every $W>0$, $\alpha\ge0$ and all sufficiently small $\beta>0$, there exists a weak solution $(\psi_{_\beta},\xi_{_\beta},\zeta_{_\beta})$ of system \eqref{1-16}-\eqref{1-17} with the following properties:
\begin{itemize}
\item[(i)]For any $p>1$, $0<\gamma<1$, $\psi_\beta\in W^{2,p}_{\text{loc}}(D)\cap C_{\text{loc}}^{1,\gamma}(\bar{D})$ and satisfies
\begin{equation*}
  \mathcal{L}\psi_{_\beta}=\zeta_{_\beta}\ \ \text{a.e.} \ \text{in} \ D.
\end{equation*}

\item[(ii)] $(\psi_{_\beta},\xi_{_\beta},\zeta_{_\beta})$ is of the form
\begin{equation*}
      \psi_{_\beta}=\mathcal{K}\zeta_{_\beta}-\frac{Wr^2}{2}\log{\frac{1}{\beta}}-\mu_{_\beta},\ \ \xi_{_\beta}=\frac{1}{\beta}(\psi_{_\beta})_+,\ \ \zeta_{_\beta}=\frac{1}{r^2\beta^2}{(\psi_{_\beta})}_{+}+\frac{\alpha}{\beta}\chi_{_{\{\psi_{\beta}>0\}}},
\end{equation*}
for some $\mu_{_\beta}>0$ depending on $\beta$. Furthermore,
\begin{equation*}
  \kappa(\zeta_{_\beta})=1\ \  \text{and}\ \ \ \xi_{_\beta} \le \frac{R_0}{\beta},
\end{equation*}
for some positive constant $R_0$ independent of $\beta$.

 \item[(iii)]Let $\Omega_\beta:=supp(\xi_{_\beta})=supp(\zeta_{_\beta})$, there exists some constant $R_1>1$ independent of $\beta$ such that
 \begin{equation*}
   diam(\Omega_\beta)\le R_1\beta.
 \end{equation*}
 Moreover,
 \begin{equation*}
   \lim_{\lambda \to +\infty}dist_{\mathcal{C}_{r_{_*}}}(\Omega_\beta)=0,
\end{equation*}
where $r_*=\frac{1}{4\pi W}$.
Furthermore, as $\beta \to 0^+$,
 $$\mu_{_\beta}=\frac{3}{32}\frac{1}{\pi^2W}\log{\frac{1}{\beta}}+O(1).$$
\item[(iv)] Let $$\mathbf{v}_{_\beta}=\frac{1}{r}\Big(-\frac{\partial\psi_{_\beta}}{\partial z}\mathbf{e}_r+\frac{1}{\beta}(\psi_{_\beta})_+\mathbf{e}_\theta+\frac{\partial\psi_{_\beta}}{\partial r}\mathbf{e}_z\Big).$$ Then
\begin{equation*}
  \mathbf{v}_{_\beta}\to -W\log\frac{1}{\beta}~\mathbf{e}_z\ \  \text{at}\ \infty.
\end{equation*}
 Moreover, as $r\to 0$,
\begin{equation*}
     \frac{1}{r}\frac{\partial\psi_{_\beta}}{\partial z}\to 0,\ \ \frac{1}{r\beta}(\psi_{_\beta})_+\to 0\ \text{and}~ \ \frac{1}{r}\frac{\partial\psi_{_\beta}}{\partial r}\  \text{approaches a finite limit}.
\end{equation*}
\item[(v)]Let the center of vorticity be
\begin{equation*}
  X_\beta=\int_D x\zeta_{_\beta}(x)d\textit{m}_2(x),
\end{equation*}
and define the rescaled version of $\zeta_{_\beta}$ to be
\begin{equation*}
  g_{_\beta}(x)={\beta^2}\zeta_\beta(X_\beta+\beta x).
\end{equation*}
Then every accumulation points as $\beta \to 0^+$, in the weak topology of $L^2$, of $\{g_{_\beta}:\beta>0\}$ are radially nonincreasing functions.
\end{itemize}
\end{theorem}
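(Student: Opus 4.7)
Since the domain $D=\Pi$ is the entire meridional half-plane, the proof follows the same Turkington-style blueprint as Theorems \ref{thm1} and \ref{thm2}, but with a genuinely new compactness issue: there is full translation invariance in $z$ and no boundary to help localize the support. The plan is to formulate a constrained maximization problem whose Euler-Lagrange equations yield the explicit profile in (ii), then to secure compactness of maximizing sequences by using translations in $z$ together with the drift penalty $-\tfrac{W}{2}\log\tfrac{1}{\beta}\int r^2 \zeta\,d\nu$.

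\textbf{Step 1: variational formulation.} I would introduce the admissible class
\begin{equation*}
\mathcal{A}_\beta=\bigl\{\zeta\in L^\infty(D):\,0\le \zeta\le \Lambda/\beta^{2},\ \kappa(\zeta)=1\bigr\}
\end{equation*}
(with $\Lambda$ sufficiently large) and the functional
\begin{equation*}
E_\beta(\zeta)=\tfrac{1}{2}\int_D\zeta\,\mathcal{K}\zeta\,d\nu-\tfrac{W}{2}\log\tfrac{1}{\beta}\int_D r^{2}\zeta\,d\nu-\tfrac{\beta^{2}}{2}\int_D r^{2}\zeta^{2}\,d\nu-\alpha\beta\int_D \zeta\,d\nu,
\end{equation*}
exactly as in Sections 3--4. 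By a direct first-variation computation, any maximizer $\zeta_\beta$ satisfies, with Lagrange multiplier $\mu_\beta$ for the circulation constraint, the pointwise relation $\zeta_\beta=\tfrac{1}{r^{2}\beta^{2}}(\psi_\beta)_{+}+\tfrac{\alpha}{\beta}\chi_{\{\psi_\beta>0\}}$ with $\psi_\beta=\mathcal{K}\zeta_\beta-\tfrac{Wr^{2}}{2}\log\tfrac{1}{\beta}-\mu_\beta$, which is precisely (ii). The verification that this is a weak solution of \eqref{1-16}--\eqref{1-17}, the regularity $\psi_\beta\in W^{2,p}_{\text{loc}}\cap C^{1,\gamma}_{\text{loc}}$, and the uniform bound $\xi_\beta\le R_0/\beta$ are all obtained from the obstacle-type structure and standard elliptic estimates, exactly as in the earlier sections.

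\textbf{Step 2: compactness, the main obstacle.} For a maximizing sequence $\{\zeta_n\}\subset\mathcal{A}_\beta$, translation invariance of $E_\beta$ in $z$ allows me to assume that each $\zeta_n$ has center of vorticity with $z$-coordinate equal to $0$. The lower bound $c_\beta:=\sup_{\mathcal{A}_\beta}E_\beta\ge \bigl(\tfrac{1}{4\pi r_*}-\tfrac{Wr_*}{2}\bigr)r_*\log\tfrac{1}{\beta}+O(1)$, obtained by evaluating $E_\beta$ at an explicit test vortex concentrated near $(r_*,0)$ with $r_*=1/(4\pi W)$, combined with the drift term $-\tfrac{W}{2}\log\tfrac{1}{\beta}\int r^{2}\zeta_n$, forces $\int_D r^{2}\zeta_n\,d\nu\le C$. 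Together with $\kappa(\zeta_n)=1$ and $\zeta_n\le \Lambda/\beta^{2}$, this rules out escape to $r=\infty$ and gives tightness in the $r$-direction; escape to $r=0$ is ruled out by the corresponding kernel estimates for $K$ (exploited just as in the exterior-ball case of Section 4). Extracting a weakly convergent subsequence in $L^{2}(D,r\,drdz)$ and using the compactness of $\mathcal{K}$ on weighted $L^{p}$-balls of supports with controlled tails, $E_\beta$ is upper semicontinuous along the sequence and the weak limit $\zeta_\beta$ lies in $\mathcal{A}_\beta$. This step is the hardest point of the proof; the hypothesis $W>0$ enters precisely to give a strictly positive coefficient in the drift penalty and in turn to force tightness.

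\textbf{Step 3: localization, asymptotics and qualitative properties.} With the maximizer in hand, the diameter estimate $\mathrm{diam}(\Omega_\beta)\le R_1\beta$ is obtained by the standard rearrangement/comparison argument of Sections 3--4, which shows the vortex is as concentrated as the constraint $\zeta_\beta\le\Lambda/\beta^{2}$ permits. Asymptotic localization at $\mathcal{C}_{r_*}$ with $r_*=1/(4\pi W)$ follows by expanding $c_\beta$ as $\beta\to 0^{+}$: the leading Green's-function self-energy contributes $\tfrac{1}{4\pi r_c}\log\tfrac{1}{\beta}$ where $r_c$ is the limit radial position, so the net coefficient $\tfrac{1}{4\pi r_c}-\tfrac{Wr_c}{2}$ is uniquely maximized at $r_c=r_*$. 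Substituting back into the Lagrange identity yields $\mu_\beta=\tfrac{3}{32\pi^{2}W}\log\tfrac{1}{\beta}+O(1)$, where $\tfrac{3}{32\pi^{2}W}=\tfrac{1}{4\pi r_*}-\tfrac{Wr_*^{2}}{2}$ as in Theorem \ref{thm1}. The far-field behavior of $\mathbf{v}_\beta$ in (iv) is read off directly from the decomposition of $\psi_\beta$ (the $\mathcal{K}\zeta_\beta$ piece decays by the Green's kernel and compact support, the $-\tfrac{Wr^{2}}{2}\log\tfrac{1}{\beta}$ piece gives the asymptotic axial velocity), and the radial-nonincrease of weak limits of the blow-ups $g_\beta$ in (v) follows, as in Section 3, from the fact that in the rescaled variable the Green's kernel degenerates to the two-dimensional Newtonian kernel, whose associated energy is strictly increased by symmetric rearrangement, so any weak limit must coincide with its own rearrangement.
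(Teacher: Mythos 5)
Your Step 2 contains a genuine gap, and it is exactly at the point you yourself flag as the hardest one: tightness in the unbounded $z$-direction. Normalizing the $z$-coordinate of the center of vorticity by translation invariance does not prevent a maximizing sequence from spreading or splitting into clumps that drift apart along the axis, and under mere weak convergence in $L^2(D,r\,drdz)$ mass can escape to $z=\pm\infty$, so the weak limit need not satisfy your equality constraint $\kappa(\zeta)=1$ and the upper semicontinuity of $\int_D\zeta\,\mathcal K\zeta\,d\nu$ that you invoke ("compactness of $\mathcal K$ on supports with controlled tails") is unjustified precisely because no control of the tails in $z$ has been established; the drift penalty $-\tfrac W2\log\tfrac1\beta\int r^2\zeta\,d\nu$ only constrains the $r$-direction. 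The paper proceeds differently and avoids this issue entirely: in Section 5 the admissible class is cut down a priori by requiring $\mathrm{supp}(\zeta)\subseteq D_2=\{0<r<1/(2\pi W)\}$ and $\int_D\zeta\,d\nu\le 1$ (an inequality, which makes $\mu_{\beta}\ge 0$ automatic), and compactness in $z$ is obtained as in Section 3 by replacing the maximizing sequence with its Steiner symmetrization in $z$ and using the convergence results of \cite{Dou2} for the quadratic term (Lemma \ref{le5-1}). The artificial support restriction is then shown to be inactive a posteriori: the concentration analysis places $\Omega_\beta$ near $r_*=1/(4\pi W)<1/(2\pi W)$, and a maximum-principle argument in the spirit of Lemma \ref{lex1} and Propositions \ref{ad5}, \ref{le5-2} shows $\{\psi_{\beta}>0\}=\{\zeta_{\beta}>0\}$, so the stated profile (ii) holds on all of $D$. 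If you want to work without the support restriction, you must supply a concentration-compactness or symmetrization argument for the existence step, not just centering.

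There are also concrete slips in your sketch that would propagate into (ii) and (iii). Your penalization $-\tfrac{\beta^2}{2}\int r^2\zeta^2\,d\nu-\alpha\beta\int\zeta\,d\nu$ is not the paper's $-\tfrac{\beta^2}{2}\int r^2(\zeta-\tfrac\alpha\beta)_+^2\,d\nu$; its first variation gives $\zeta=(\psi_\beta-\alpha\beta)/(\beta^2r^2)$ on the intermediate set, not $\zeta_{\beta}=\tfrac{1}{r^2\beta^2}(\psi_{\beta})_++\tfrac{\alpha}{\beta}\chi_{\{\psi_{\beta}>0\}}$, so with your functional (ii) does not follow "exactly". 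With an equality constraint the positivity $\mu_\beta>0$ also needs an argument (the paper gets $\mu_\beta\ge0$ from the bathtub principle and then strict positivity from the energy lower bound). Finally, the leading self-energy of a unit-circulation vortex concentrated near radius $r_c$ is $\tfrac{r_c}{4\pi}\log\tfrac1\beta$, not $\tfrac{1}{4\pi r_c}\log\tfrac1\beta$; the function to maximize is $\tfrac{r_c}{4\pi}-\tfrac{Wr_c^2}{2}$ (equivalently $\Gamma_1$), and the constant in (iii) comes from $\tfrac{3}{32\pi^2W}=\tfrac{r_*}{2\pi}-\tfrac{Wr_*^2}{2}$, not from $\tfrac{1}{4\pi r_*}-\tfrac{Wr_*^2}{2}$; as written, your expansion would not select $r_*=1/(4\pi W)$.
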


\begin{remark}
 Compared with \cite{Tur89}, the velocities at infinity of our solutions are determined. Our result is consistent with the Kelvin-Hicks formula \eqref{KH}.
\end{remark}

\begin{remark}
  When $\alpha=0$, we have $\mathbf{v}_{_\beta}=\beta\, \nabla\times\mathbf{v}_{_\beta}$ in the vortex core $\Omega_\beta$. Consequently, the vortex rings we coutructed with $\alpha=0$ consist of a Beltrami flow within their vortex cores.
\end{remark}

\begin{remark}
With these results in hand, one might further show that $K\zeta_{_\beta}$ bifurcates from the Green's function as $\beta$ tends to $0^+$ in all three cases (see \cite{CWZ}). More precisely, let $a(\beta)$ be any point of $\Omega_\beta$. Then, as $\beta \to 0^+$, we have
\begin{equation*}
  \mathcal{K}\zeta_{_\beta}(\cdot)-{K(\cdot,a(\beta))} \to 0 \ \  \text{in}\ \  W^{1,p}_{\text{loc}}(D),\ \ ~1\le p<2,
\end{equation*}
and hence in $L^r_{\text{loc}}(D)$, $1\le r<\infty$. Moreover, for any $\gamma\in (0,1)$, as $\beta \to 0^+$,
\begin{equation*}
  \mathcal{K}\zeta_{_\beta}(\cdot)-{K(\cdot,a(\beta))} \to 0 \ \  \text{in}\ \  C^{1,\gamma}_{loc}(D\backslash\{(r_*,0)\}).
\end{equation*}
\end{remark}

\section{Vortex Rings in a Cylinder}

In this section we will give proof for Theorem \ref{thm1}. To this end, we need to establish some auxiliary results. The first two  in the following are about some estimates of the Green's function of $\mathcal{L}$ in $D$. From \cite{Ta}, we have
\begin{lemma}\label{le1}
  Let D be admissible. Let $K(r,z,r',z')$ be the Green's function of $\mathcal{L}$ in $D$, with respect to zero Dirichlet data and measure $rdrdz$. Then
\begin{equation}\label{200}
  K(r,z,r',z')=G(r,z,r',z')-H(r,z,r',z'),
\end{equation}
where
\begin{equation}\label{000}
   G(r,z,r',z')=\frac{rr'}{4\pi}\int_{-\pi}^{\pi}\frac{\cos\theta'd\theta'}{[(z-z')^2+r^2+r'^2-2rr'\cos\theta']^\frac{1}{2}},
\end{equation}
and  $H(r,z,r',z')\in C^{\infty}(D\times D)$ is non-negative. Moreover, define
\begin{equation}\label{201}
 \sigma=[(r-r')^2+(z-z')^2]^\frac{1}{2}/(4rr')^\frac{1}{2},
\end{equation}
then for all $\sigma > 0$
\begin{equation}\label{Tadiewrong}
  0<K(r,z,r',z')\leq G(r,z,r',z')\leq\frac{(rr')^\frac{1}{2}}{4\pi}\sinh^{-1}(\frac{1}{\sigma}).
\end{equation}
\end{lemma}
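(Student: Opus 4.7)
The plan is to split the proof into three tasks. First, identify $G$ in \eqref{000} as the free-space fundamental solution of $\mathcal{L}$ on $\Pi$: the formula \eqref{000} is precisely the Stokes stream function generated by a unit circular vortex filament of radius $r'$ at height $z'$, obtained by writing the $\mathbb{R}^3$ Newtonian potential in cylindrical coordinates centered on the filament and integrating out the azimuthal variable. A distributional computation then gives $\mathcal{L}_{r,z}G(\cdot,r',z')=\delta_{(r',z')}/r'$ in $\Pi$, with $G$ vanishing on the axis $r=0$ (from the prefactor $rr'$) and at infinity, so $G$ is the Green's function of $\mathcal{L}$ on all of $\Pi$. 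This already handles case (c) of the admissible domain with $H\equiv 0$.

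For cases (a) and (b), I would simply \emph{define} $H:=G-K$. Since $K$ carries exactly the same logarithmic singularity as $G$ on the diagonal, for each fixed $(r',z')\in D$ the difference $H(\cdot,r',z')$ extends to a weak solution of $\mathcal{L}H=0$ throughout $D$ with boundary data $H|_{\partial D}=G|_{\partial D}$ (because $K$ vanishes on $\partial D$). Interior elliptic regularity upgrades $H$ to $C^\infty$ in the first pair of variables; the same argument in the second pair together with the symmetry $K(r,z,r',z')=K(r',z',r,z)$ yields $H\in C^\infty(D\times D)$. Non-negativity $H\ge 0$ is a direct consequence of the weak maximum principle, since $G\ge 0$ on $\partial D$ in both geometries and both $G$ and $H$ decay at infinity. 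The inequality $K\le G$ is then immediate from $H\ge 0$, and strict positivity $K>0$ follows from the strong maximum principle applied to $K$ itself.

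Finally, for the pointwise bound on $G$, I would exploit the identity $r^2+r'^2-2rr'\cos\theta'=(r-r')^2+4rr'\sin^2(\theta'/2)$, use $\cos\theta'\le 1$, change variables $\phi=\theta'/2$, and reduce the bound to controlling $\int_0^{\pi/2}(A^2+4rr'\sin^2\phi)^{-1/2}\,d\phi$ with $A^2=(r-r')^2+(z-z')^2$. The elementary inequality $\sin\phi\ge 2\phi/\pi$ on $[0,\pi/2]$ converts this into an integral whose antiderivative is an inverse hyperbolic sine, and the substitution $u=4\sqrt{rr'}\phi/(\pi A)$ produces a bound proportional to $\sqrt{rr'}\,\sinh^{-1}(1/\sigma)$. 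The main obstacle here is the sharp numerical constant: crudely bounding $\cos\theta'\le 1$ discards the sign cancellations in the $\theta'$-integration, so recovering the precise factor in \eqref{Tadiewrong} appears to require the exact complete-elliptic-integral representation of $G$ in terms of $K(k)$ and $E(k)$ with $k^2=4rr'/[(r+r')^2+(z-z')^2]$, together with the standard asymptotics of these functions as $k\to 1^{-}$; this is where I expect the bulk of the technical effort to lie.
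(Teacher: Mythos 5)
Your first two steps are fine and, as far as one can tell, close in spirit to the source: the paper itself gives no proof of this lemma (it is quoted from Tadie \cite{Ta}), so identifying $G$ in \eqref{000} as the free-space Stokes stream-function kernel, setting $H:=G-K$, and getting $H\in C^\infty$, $H\ge 0$, $K>0$, $K\le G$ from interior elliptic regularity, the symmetry of $K$, and the maximum principle is a legitimate reconstruction (with the routine caveats that in the unbounded cases (a), (b) you must use the decay of $G$ and $K$ at infinity and at the axis to run the maximum principle, and that in case (c) one simply has $H\equiv 0$).

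The genuine gap is in your third step, and the ``main obstacle'' you identify is a sign that you are chasing the wrong target rather than that elliptic integrals are needed. Writing $(z-z')^2+r^2+r'^2-2rr'\cos\theta'=4rr'\bigl(\sigma^2+\sin^2(\theta'/2)\bigr)$ gives
\begin{equation*}
G=\frac{\sqrt{rr'}}{4\pi}\int_0^{\pi}\frac{\cos\theta'\,d\theta'}{\sqrt{\sigma^2+\sin^2(\theta'/2)}},
\end{equation*}
and the right pointwise comparison is not $\cos\theta'\le 1$ (which loses an additive constant) but $\cos\theta'\le\cos(\theta'/2)$ on $[0,\pi]$, since $2c^2-1-c=(2c+1)(c-1)\le 0$ for $c=\cos(\theta'/2)\in[0,1]$; the substitution $s=\sin(\theta'/2)$ then gives exactly $G\le\frac{\sqrt{rr'}}{2\pi}\sinh^{-1}(1/\sigma)$ for all $\sigma>0$, with no elliptic-integral asymptotics. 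The constant $\frac{1}{4\pi}$ appearing in \eqref{Tadiewrong} cannot be recovered by any argument, because it is inconsistent with the paper's own Lemma \ref{le2}: that expansion says $G=\frac{\sqrt{rr'}}{2\pi}\sinh^{-1}(1/\sigma)+f\sqrt{rr'}$ with $f$ bounded, so for small $\sigma$ one has $G>\frac{\sqrt{rr'}}{4\pi}\sinh^{-1}(1/\sigma)$. (The bound is in fact applied later with the factor $\frac{1}{2\pi}$, e.g.\ in the estimate \eqref{212} of $I_1$, and the equation label in the source suggests the authors were aware that the $\frac{1}{4\pi}$ constant quoted from \cite{Ta} is not correct.) So you should replace your elliptic-integral plan by the $\cos(\theta'/2)$ comparison and prove the inequality with constant $\frac{1}{2\pi}$, which is the form actually needed in the rest of the paper; as stated with $\frac{1}{4\pi}$, the inequality is simply false near the diagonal.
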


For $G$ defined by \eqref{000}, we have the following expansion.
\begin{lemma}\label{le2}
  Let $\sigma$ be defined by \eqref{201}, then there exists a continuous function $f\in L^{\infty}(D\times D)$ such that
\begin{equation}\label{202}
   G(r,z,r',z')=\frac{\sqrt{rr'}}{2\pi}\log{\frac{1}{\sigma}}+\frac{\sqrt{rr'}}{2\pi}\log(1+\sqrt{\sigma^2+1})+f(r,z,r',z')\sqrt{rr'},\ \ \text{in}\ D\times D.
\end{equation}
\end{lemma}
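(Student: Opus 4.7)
The plan is to turn \eqref{000} into a one-variable integral in $\sigma$ and then extract the explicit logarithmic singularity, leaving a bounded continuous remainder. First, using the half-angle identity $1-\cos\theta'=2\sin^2(\theta'/2)$ I would rewrite the radicand in the integrand as $4rr'[\sigma^2+\sin^2(\theta'/2)]$, then substitute $\phi=\theta'/2$ and use $\cos\theta'=1-2\sin^2\phi$ together with the $\phi\mapsto-\phi$ symmetry to obtain
\[
G \;=\; \frac{\sqrt{rr'}}{2\pi}\bigl[A(\sigma)-2B(\sigma)\bigr], \quad A(\sigma):=\int_0^{\pi/2}\!\frac{d\phi}{\sqrt{\sigma^2+\sin^2\phi}},\ \ B(\sigma):=\int_0^{\pi/2}\!\frac{\sin^2\phi\,d\phi}{\sqrt{\sigma^2+\sin^2\phi}}.
\]
In particular $G/\sqrt{rr'}$ depends on $(r,z,r',z')$ only through $\sigma$, so it suffices to prove the expansion as an identity in $\sigma\in[0,\infty)$. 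The integrand of $B$ is pointwise dominated by $|\sin\phi|$, so $B(\sigma)\le 1$ uniformly and $B$ is continuous on $[0,\infty)$ by dominated convergence.

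The key step is to extract the logarithmic blow-up of $A(\sigma)$ as $\sigma\to 0^+$. I would compare it with the elementary integral $\int_0^{\pi/2}(\sigma^2+\phi^2)^{-1/2}d\phi=\sinh^{-1}(\pi/(2\sigma))$. Writing the difference of integrands as
\[
\frac{1}{\sqrt{\sigma^2+\sin^2\phi}}-\frac{1}{\sqrt{\sigma^2+\phi^2}} \;=\; \frac{\phi^2-\sin^2\phi}{\sqrt{\sigma^2+\sin^2\phi}\sqrt{\sigma^2+\phi^2}\bigl(\sqrt{\sigma^2+\sin^2\phi}+\sqrt{\sigma^2+\phi^2}\bigr)},
\]
and using $\phi^2-\sin^2\phi\le C\phi^4$ together with Jordan's inequality $\sin\phi\ge 2\phi/\pi$ on $[0,\pi/2]$, the denominator is at least a constant multiple of $\phi^3$, so the integrand is dominated by $C\phi$ uniformly in $\sigma\ge 0$. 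Hence $A(\sigma)-\sinh^{-1}(\pi/(2\sigma))$ is bounded and continuous in $\sigma\in[0,\infty)$. Since $\sinh^{-1}(\pi/(2\sigma))-\sinh^{-1}(1/\sigma)$ is also bounded and continuous on $[0,\infty)$ (with limits $\log(\pi/2)$ at $0$ and $0$ at $+\infty$), and $\sinh^{-1}(1/\sigma)=\log(1/\sigma)+\log(1+\sqrt{1+\sigma^2})$, I arrive at
\[
\frac{1}{2\pi}\bigl[A(\sigma)-2B(\sigma)\bigr] \;=\; \frac{1}{2\pi}\log\frac{1}{\sigma} + \frac{1}{2\pi}\log\bigl(1+\sqrt{1+\sigma^2}\bigr) + \tilde f(\sigma),
\]
with $\tilde f$ bounded and continuous on $[0,\infty)$; multiplication by $\sqrt{rr'}$ and the definition $f(r,z,r',z'):=\tilde f(\sigma(r,z,r',z'))$ yield \eqref{202}.

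The main obstacle is the uniform control of $A(\sigma)-\sinh^{-1}(1/\sigma)$ for all $\sigma\in(0,\infty)$: both summands diverge logarithmically as $\sigma\to 0^+$ and both decay like $1/\sigma$ as $\sigma\to\infty$, so the cancellation must be captured quantitatively without case-splitting. The comparison with the elementary integral $\int(\sigma^2+\phi^2)^{-1/2}d\phi$, which shares the leading-order behaviour in both regimes, is exactly what makes a single estimate work uniformly.
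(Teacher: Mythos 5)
Your proposal is correct, but it takes a genuinely different route from the paper's, and the difference is instructive. The paper splits the numerator as $\cos\theta'=\cos(\theta'/2)+\bigl(\cos\theta'-\cos(\theta'/2)\bigr)$ in \eqref{203}: with the weight $\cos(\theta'/2)$ the singular integral has an exact primitive (substitute $u=\sin(\theta'/2)$), giving
\[
\int_0^\pi\frac{\cos(\theta'/2)\,d\theta'}{[4\sigma^2+4\sin^2(\theta'/2)]^{1/2}}=\log\frac{1}{\sigma}+\log\bigl(1+\sqrt{\sigma^2+1}\bigr)
\]
verbatim as in \eqref{204}, while the leftover term is bounded simply because $|\cos\theta'-\cos(\theta'/2)|\le C\sin(\theta'/2)$ cancels the denominator; no comparison estimates are needed. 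You instead split off the constant $1$ (via $\cos\theta'=1-2\sin^2\phi$), so your singular piece $A(\sigma)$ has no elementary closed form and you recover the two logarithmic terms through a two-step comparison, first with $\int_0^{\pi/2}(\sigma^2+\phi^2)^{-1/2}\,d\phi=\sinh^{-1}(\pi/(2\sigma))$ and then with $\sinh^{-1}(1/\sigma)$. Your estimates do close: the bound $\phi^2-\sin^2\phi\le C\phi^4$ together with Jordan's inequality gives the uniform majorant $C\phi$ for the difference of integrands, hence boundedness and continuity of $A(\sigma)-\sinh^{-1}(\pi/(2\sigma))$ on $[0,\infty)$ by dominated convergence; $B(\sigma)\le 1$ and the limit $\sinh^{-1}(\pi/(2\sigma))-\sinh^{-1}(1/\sigma)\to\log(\pi/2)$ as $\sigma\to0^+$ are correct; and since $\sigma$ is continuous on $D\times D$ (as $r,r'>0$ there), $f:=\tilde f\circ\sigma$ is indeed continuous and bounded. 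The trade-off is that the paper's choice of weight is a small trick that makes the singular part exact and the proof essentially three lines, whereas your comparison argument is longer but more generic: it requires no foresight about which weight renders the integral explicit, and the same scheme would work for other kernels with the same local behaviour.
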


\begin{proof}
Let $\sigma$ be defined by $\eqref{201}$, then
\begin{equation}\label{203}
\begin{split}
    G(r,z,r',z') & =\frac{rr'}{2\pi}\int_{0}^{\pi}\frac{\cos\theta'd\theta'}{[(z-z')^2+r^2+r'^2-2rr'\cos\theta']^\frac{1}{2}}\\
                 & =\frac{\sqrt{rr'}}{2\pi}\int_{0}^{\pi}\frac{\cos\theta'd\theta'}{\{[(z-z')^2+(r-r')^2]/(rr')+2(1-\cos\theta')\}^\frac{1}{2}}\\
                 & =\frac{\sqrt{rr'}}{2\pi}\int_{0}^{\pi}\frac{[\cos\theta'/2+(\cos\theta'-\cos\theta'/2)]d\theta'}{[4\sigma^2+4(\sin\theta'/2)^2]^\frac{1}{2}}.
\end{split}
\end{equation}
Notice that
\begin{equation}\label{204}
  \int_{0}^{\pi}\frac{\cos\theta'/2d\theta'}{[4\sigma^2+4(\sin\theta'/2)^2]^\frac{1}{2}}=\log(1+\sqrt{\sigma^2+1})+\log{\frac{1}{\sigma}},
\end{equation}
and
\begin{equation}\label{205}
  \int_{0}^{\pi}\frac{|\cos\theta'-\cos\theta'/2|d\theta'}{[4\sigma^2+4(\sin\theta'/2)^2]^\frac{1}{2}}\le \text{const.}<+\infty.
\end{equation}
From \eqref{203},\eqref{204} and \eqref{205}, $\eqref{202}$ clearly follows.
\end{proof}

The following result is a variant of Lemma 6 of Burton \cite{B5}, which can be proved by almost the same arguments.
\begin{lemma}\label{burton}
  Let $D\subseteq \Pi$ be a domain, let $(\psi,\zeta)\in W^{2,p}_{\text{loc}}(D)\times L^\infty(D)$ for some $p>1$ satisfying $\mathcal{L}\psi=\zeta$ a.e. in $D$. Suppose that $\zeta=f\circ \psi$ a.e. in $D$, for some monotonic function $f$. Then $\text{div}~(\zeta \nabla^\bot\psi)=0$ in the sense of distribution.
\end{lemma}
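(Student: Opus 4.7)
The plan is to prove the equivalent distributional identity
\begin{equation*}
\int_D \zeta\,\partial(\psi,\varphi)\,drdz \;=\; 0 \qquad \text{for every } \varphi\in C_c^\infty(D),
\end{equation*}
since $\nabla^{\bot}\psi\cdot\nabla\varphi=\partial(\psi,\varphi)$, so that the identity above is exactly $\text{div}(\zeta\nabla^{\bot}\psi)=0$ in $\mathcal{D}'(D)$.

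First I would dispose of the smooth case. Assume for the moment that $f\in C^1$, and choose any primitive $F$ of $f$. Since $\psi\in W^{2,p}_{\text{loc}}(D)$, the Sobolev chain rule yields $\nabla F(\psi)=f(\psi)\nabla\psi=\zeta\nabla\psi$ a.e., hence $\zeta\,\partial(\psi,\varphi)=\partial(F(\psi),\varphi)$ a.e. A double integration by parts (valid because $F(\psi)\in W^{1,p}_{\text{loc}}$ and $\varphi\in C_c^\infty(D)$, and produces no boundary term) gives $\int_D \partial(F(\psi),\varphi)\,drdz=0$ from the equality of the mixed partials $\partial_r\partial_z\varphi=\partial_z\partial_r\varphi$.

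Next I would approximate the general monotonic $f$ by smooth ones. Because $\zeta\in L^\infty(D)$, after extending $f$ monotonically to all of $\mathbb{R}$ we can mollify to obtain a sequence $(f_n)$ of $C^1$ monotonic functions with $\|f_n\|_\infty\le\|\zeta\|_{L^\infty(D)}$ and $f_n\to f$ at every continuity point of $f$. Let $C$ be the (at most countable) jump set of $f$. The crucial input is Stampacchia's lemma: for $\psi\in W^{1,1}_{\text{loc}}(D)$, $\nabla\psi=0$ $m_2$-a.e.\ on each level set $\{\psi=c\}$, and hence $\nabla\psi=0$ $m_2$-a.e.\ on the countable union $\psi^{-1}(C)$. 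Consequently $\partial(\psi,\varphi)=0$ $m_2$-a.e.\ on $\psi^{-1}(C)$, and so $f_n(\psi)\to f(\psi)=\zeta$ at $m_2$-a.e.\ point at which $\partial(\psi,\varphi)\ne 0$. Applying the dominated convergence theorem, with dominant $\|\zeta\|_{L^\infty}|\partial(\psi,\varphi)|\in L^1(\text{supp}\,\varphi)$, together with the smooth case applied to each $f_n$, yields
\begin{equation*}
\int_D \zeta\,\partial(\psi,\varphi)\,drdz \;=\; \lim_{n\to\infty}\int_D f_n(\psi)\,\partial(\psi,\varphi)\,drdz \;=\; 0,
\end{equation*}
which is the desired conclusion.

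The principal obstacle is precisely the discontinuity set of $f$: a monotonic $f$ may have countably many jumps, and if $\psi$ happens to be constant on a set of positive measure with that constant value being a discontinuity of $f$, then a naive pointwise limit $f_n(\psi)\to f(\psi)$ can fail on that set. Stampacchia's lemma neutralises this difficulty by forcing $\nabla\psi=0$, and hence $\partial(\psi,\varphi)=0$, $m_2$-a.e.\ on every such level set, so the problematic points simply contribute nothing to the integral. Everything else reduces to the chain rule in Sobolev spaces and integration by parts for compactly supported test functions, exactly as in Burton \cite{B5}.
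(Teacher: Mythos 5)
Your proof is correct, and it follows the same circle of ideas as the argument the paper actually relies on: the paper gives no proof of this lemma, deferring to Lemma 6 of Burton \cite{B5} ``by almost the same arguments'', and that argument likewise hinges on the two ingredients you use --- producing a potential via the chain rule so that $f(\psi)\nabla\psi$ becomes a gradient (through smooth/Lipschitz approximation of the monotone $f$), and the Stampacchia-type fact that $\nabla\psi=0$ a.e.\ on the level sets over the countable jump set of $f$, so the discontinuity values contribute nothing. Two small points are worth tightening. First, the bound $\|f_n\|_\infty\le\|\zeta\|_{L^\infty}$ does not come for free from mollifying an arbitrary monotone extension of $f$: either truncate $f$ at $\pm\|\zeta\|_{L^\infty}$ beforehand (harmless, since $|f(\psi(x))|=|\zeta(x)|\le\|\zeta\|_{L^\infty}$ a.e., so the truncation does not alter $f\circ\psi$ a.e.), or simply invoke that $\psi\in W^{2,p}_{\text{loc}}$ with $p>1$ is continuous in two dimensions, hence locally bounded, so that $f_n\circ\psi$ is uniformly bounded on $\mathrm{supp}\,\varphi$ and the dominated convergence step goes through. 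Second, your argument never uses the hypothesis $\mathcal{L}\psi=\zeta$; this is not a flaw --- the conclusion is indeed true without it --- but it is worth stating explicitly rather than leaving that hypothesis silently idle.
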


In the rest of this section we let $D=\{(r,z)\in \Pi~|~0<r<d\}$ for some $d>0$.

Inspired by the variational principle proposed by Turkington (see (VP2) of \cite{Tur89}), we consider the following specific constrained maximization problem in $\zeta$, the solutions of which may furnish the desired steady vortex rings with swirl. Notice that the energy functional depends on parameter $\beta$ and the variational admissible class adopted here is different from that in \cite{Tur89}. This process allows the velocity of the flow at far fields to be determined, rather than being a Lagrange multiplier, as in \cite{Tur89}. Furthermore, notice that we require $\zeta \in L^\infty$ in the admissible class. As one will see, this requirement is crucial in the proof.

For fixed $\alpha \ge0$, $\beta>0$ and $W>0$, we consider the energy functional as follows
\[E_\beta(\zeta)=\frac{1}{2}\int_D{\zeta \mathcal{K}\zeta}d\nu-\frac{{W}}{2}\log{\frac{1}{\beta}}\int_{D}r^2\zeta d\nu-\frac{\beta^2}{2}\int_D r^2(\zeta-\frac{\alpha}{\beta})_{+}^2d\nu.\]
We adopt the class of admissible functions $\mathcal{A}_{\beta,\Lambda}$ as follows
\begin{equation*}
\mathcal{A}_{\beta,\Lambda}=\{\zeta\in L^\infty(D)~|~ 0\le \zeta \le \frac{\Lambda}{\beta^2}~ \text{a.e.}, \int_{D}\zeta d\nu \le1\},
\end{equation*}
where $\Lambda>\max\{\alpha \beta,1\}$ is a fixed positive number. For any $\zeta\in \mathcal{A}_{\beta,\Lambda}$, one can check that $\mathcal{K}\zeta \in W^{2,p}_{\text{loc}}(D)\cap C^{1,\gamma}(\bar{D})$ for any $p>1$, $0<\gamma<1$ (see \cite{B1}).

Let ${\zeta}^*$ be the Steiner symmetrization of $\zeta$ with respect to the line $z=0$ in $D$ (see Appendix I of \cite{FB}). An absolute maximum for $E_\beta$ over $\mathcal{A}_{\beta,\Lambda}$  can be easily found.

\begin{lemma}\label{le3}
For every prescribed $\alpha \ge0$, $1>\beta>0$, $W>0$ and $\Lambda>\max\{\alpha \beta,1\}$, there exists $\zeta=\zeta_{_{\beta,\Lambda}} \in \mathcal{A}_{\beta,\Lambda}$ such that
\begin{equation}\label{206}
 E_\beta(\zeta_{_{\beta,\Lambda}})= \max_{\tilde{\zeta} \in \mathcal{A}_{\beta,\Lambda}}E_\beta(\tilde{\zeta})<+\infty.
\end{equation}
Moreover, there exists a Lagrange multiplier $\mu_{_{\beta,\Lambda}} \ge 0$ such that
\begin{equation}\label{207}
\zeta_{_{\beta,\Lambda}}=\Big(\frac{\psi_{_{\beta,\Lambda}}}{r^2\beta^2}+\frac{\alpha}{\beta}\Big){\chi}_{\{0<\psi_{_{\beta,\Lambda}}<(\Lambda-\alpha\beta)r^2\}}+\frac{\Lambda}{\beta^2}\chi_{\{\psi_{_{\beta,\Lambda}} \ge (\Lambda-\alpha\beta)r^2\}} \ \ a.e.\  \text{in}\  D,
\end{equation}
where
\begin{equation}\label{001}
 \psi_{_{\beta,\Lambda}}=\mathcal{K}\zeta_{_{\beta,\Lambda}}-\frac{Wr^2}{2}\log{\frac{1}{\beta}}-\mu_{_{\beta,\Lambda}}.
\end{equation}
Also, $\zeta_{_{\beta,\Lambda}}=(\zeta_{_{\beta,\Lambda}})^*$. Furthermore, whenever $E_\beta(\zeta_{_{\beta,\Lambda}})>0$ and $\mu_{_{\beta,\Lambda}}>0$ there holds $\int_D\zeta_{_{\beta,\Lambda}} d\nu=1$ and $\zeta_{_{\beta,\Lambda}}$ has compact support in $D$.
\end{lemma}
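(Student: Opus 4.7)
My plan is to run the direct method of the calculus of variations, then extract the Euler--Lagrange form via perturbations compatible with the box constraint $0\le\zeta\le\Lambda/\beta^2$, and finally read off the listed extra properties from the resulting pointwise relation.

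\emph{Step 1: finiteness of the supremum and existence of a maximizer.} First I would bound $E_\beta$ on $\mathcal{A}_{\beta,\Lambda}$. The two linear terms are trivially controlled by $\sup_D r^3<\infty$ (since $r<d$) and $\int \zeta\, d\nu\le 1$. For the quadratic term $\tfrac12\int \zeta\,\mathcal{K}\zeta\, d\nu$, I would use the pointwise Green's function bound $K\le G\le(\sqrt{rr'}/4\pi)\sinh^{-1}(1/\sigma)$ from Lemma 3.1 together with the $L^\infty$ bound $\zeta\le \Lambda/\beta^2$ and the total mass bound to get a finite estimate depending on $\beta,\Lambda,d,W$. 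Take a maximizing sequence $\{\zeta_n\}\subset\mathcal{A}_{\beta,\Lambda}$; since this set is bounded in $L^\infty(D)$ and contained in a ball of $L^2(D,d\nu)$, pass to a weakly convergent subsequence $\zeta_n\rightharpoonup\bar\zeta$ in $L^2(D,d\nu)$. The constraints in $\mathcal{A}_{\beta,\Lambda}$ are convex and closed under weak limits, so $\bar\zeta\in\mathcal{A}_{\beta,\Lambda}$. The linear terms pass to the limit. For the kinetic energy I would use that $\mathcal{K}:L^2(D,d\nu)\to L^2(D,d\nu)$ is compact (via the Green's function estimate of Lemma 3.1 together with elliptic regularity and the embedding $W^{2,p}_{\mathrm{loc}}\hookrightarrow C^{1,\gamma}_{\mathrm{loc}}$ on bounded pieces, plus the decay controlled by the same kernel estimate that confines the mass in an effective compact set) so that $\mathcal{K}\zeta_n\to\mathcal{K}\bar\zeta$ strongly in $L^2(D,d\nu)$, hence $\int\zeta_n\mathcal{K}\zeta_n\,d\nu\to\int\bar\zeta\mathcal{K}\bar\zeta\,d\nu$. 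The remaining term $-\tfrac{\beta^2}{2}\int r^2(\zeta-\alpha/\beta)_+^2\, d\nu$ is concave in $\zeta$, so it is weakly upper semicontinuous. Hence $E_\beta(\bar\zeta)\ge\limsup E_\beta(\zeta_n)$ and $\bar\zeta=:\zeta_{\beta,\Lambda}$ realises the maximum.

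\emph{Step 2: Euler--Lagrange equation with Lagrange multiplier.} To obtain \eqref{207}, I would use a bathtub-type argument. For any bounded measurable $\phi$ supported in a set where $0\le\zeta_{\beta,\Lambda}+s\phi\le\Lambda/\beta^2$ for small $s>0$ and such that $\int(\zeta_{\beta,\Lambda}+s\phi)\,d\nu\le1$, optimality gives
\[
\left.\frac{d}{ds}\right|_{s=0^+}E_\beta(\zeta_{\beta,\Lambda}+s\phi)\le 0.
\]
A direct computation yields the functional derivative (after dividing by $r$)
\[
g(r,z):=\mathcal{K}\zeta_{\beta,\Lambda}-\tfrac{Wr^2}{2}\log\tfrac{1}{\beta}-\beta^2 r^2\bigl(\zeta_{\beta,\Lambda}-\tfrac{\alpha}{\beta}\bigr)_+-\mu_{\beta,\Lambda},
\]
where $\mu_{\beta,\Lambda}\ge0$ is the multiplier for $\int\zeta\,d\nu\le 1$ (its non-negativity follows by testing $\phi\le 0$). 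Standard bathtub reasoning then forces $\zeta_{\beta,\Lambda}=\Lambda/\beta^2$ on $\{g>0\}$, $\zeta_{\beta,\Lambda}=0$ on $\{g<0\}$, and $g=0$ on the intermediate set. Substituting $\psi_{\beta,\Lambda}=\mathcal{K}\zeta_{\beta,\Lambda}-\tfrac{Wr^2}{2}\log\tfrac1\beta-\mu_{\beta,\Lambda}$ translates these three regimes into the regions $\{\psi_{\beta,\Lambda}\ge(\Lambda-\alpha\beta)r^2\}$, $\{\psi_{\beta,\Lambda}\le 0\}$, and $\{0<\psi_{\beta,\Lambda}<(\Lambda-\alpha\beta)r^2\}$ respectively, giving exactly \eqref{207}.

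\emph{Step 3: Steiner symmetry and the ``whenever'' clause.} I would observe that $\int\zeta\,d\nu$ and $\int r^2(\zeta-\alpha/\beta)_+^2\,d\nu$ depend only on the distribution of $\zeta$ along each line $r=$const, so they are preserved by Steiner symmetrisation in $z$; the kernel $K(r,z,r',z')$ depends on $z-z'$ symmetrically and is decreasing in $|z-z'|$, so Riesz's rearrangement inequality implies $\int\zeta^*\mathcal{K}\zeta^*\,d\nu\ge\int\zeta\mathcal{K}\zeta\,d\nu$. Thus $\zeta_{\beta,\Lambda}^*$ is also a maximiser, and I would select the maximiser to satisfy $\zeta_{\beta,\Lambda}=\zeta_{\beta,\Lambda}^*$. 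For the final assertions: if $\mu_{\beta,\Lambda}>0$, complementary slackness for the mass constraint forces $\int\zeta_{\beta,\Lambda}\,d\nu=1$. For compact support, I would study $\psi_{\beta,\Lambda}$ near $\partial D$: as $r\to 0$ the Green's function expansion in Lemma 3.2 gives $\mathcal{K}\zeta_{\beta,\Lambda}=O(r^2)$, while the $-\mu_{\beta,\Lambda}$ term dominates, so $\psi_{\beta,\Lambda}<0$ near the axis; along $r\to d$ the Dirichlet condition kills $\mathcal{K}\zeta_{\beta,\Lambda}$ and the remaining terms are strictly negative; as $|z|\to\infty$ the decay of the kernel together with the $-Wr^2\log(1/\beta)/2-\mu_{\beta,\Lambda}$ contribution likewise makes $\psi_{\beta,\Lambda}<0$. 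Since $\zeta_{\beta,\Lambda}=0$ on $\{\psi_{\beta,\Lambda}\le 0\}$ (which has to be combined with $E_\beta>0$ to rule out the degenerate case $\zeta\equiv 0$), this gives compact support in $D$.

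\emph{Main obstacle.} The delicate point is the upper semicontinuity (in fact continuity) of the quadratic term $\int\zeta\mathcal{K}\zeta\,d\nu$ along the maximising sequence, which requires some form of compactness of $\mathcal{K}$ against the uniform $L^\infty\cap L^1(d\nu)$ bound coming from $\mathcal{A}_{\beta,\Lambda}$; the cylindrical geometry is unbounded in $z$, so I must use the decay of the Green's function (Lemma 3.1 combined with the expansion of Lemma 3.2) to prevent mass escape to $|z|=\infty$ from destroying the limit. Once this compactness is in hand, the rest is a careful but routine bathtub argument.
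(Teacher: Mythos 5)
Your Step 1 contains the essential gap. The domain $D=\{0<r<d\}$ is an infinite cylinder and the Dirichlet Green's function $K(r,z,r',z')$ depends on $z$ and $z'$ only through $z-z'$, so $E_\beta$ is invariant under translations of $\zeta$ in the $z$-direction. Consequently $\mathcal{K}$ is \emph{not} compact on $L^2(D,d\nu)$, and an arbitrary maximizing sequence can lose all of its mass at $|z|\to\infty$ (translate any fixed competitor by $n$): its weak limit may even be $0$, and the asserted strong convergence $\mathcal{K}\zeta_n\to\mathcal{K}\bar\zeta$ and continuity of $\int\zeta_n\mathcal{K}\zeta_n\,d\nu$ fail. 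Decay of the Green's function cannot prevent this, because translated profiles have exactly the same energy, so no quantitative kernel estimate confines the sequence. The remedy — and this is what the paper does — is to apply Steiner symmetrization to the \emph{maximizing sequence}, not only to the limit as in your Step 3: by a Riesz-type rearrangement argument one may assume $\zeta_j=(\zeta_j)^*$ without decreasing $E_\beta$ (Lemma 4.6 of Douglas), and for Steiner-symmetric sequences the quadratic term is weakly continuous (Lemma 4.8 of Douglas), which restores the compactness that the unbounded cylinder destroys. In your plan the symmetrization appears only after existence has supposedly been proved, so it cannot repair Step 1.

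There is also a secondary gap in Step 2: the first-order conditions do not by themselves yield the exact form \eqref{207}. On the intermediate set they give $\psi_{_{\beta,\Lambda}}=\beta^2 r^2(\zeta_{_{\beta,\Lambda}}-\alpha/\beta)_+$, and on the subset $\{0<\zeta_{_{\beta,\Lambda}}\le\alpha/\beta\}$ this only says $\psi_{_{\beta,\Lambda}}=0$ while leaving $\zeta_{_{\beta,\Lambda}}$ undetermined, whereas \eqref{207} forces $\zeta_{_{\beta,\Lambda}}=0$ wherever $\psi_{_{\beta,\Lambda}}\le0$. One must additionally show that $m_2(\{0<\zeta_{_{\beta,\Lambda}}\le\alpha/\beta\})=0$; the paper does this by observing that on this set $\mathcal{K}\zeta_{_{\beta,\Lambda}}-\frac{Wr^2}{2}\log\frac{1}{\beta}$ equals the constant $\mu_{_{\beta,\Lambda}}$, and since $\mathcal{L}$ of this function equals $\zeta_{_{\beta,\Lambda}}$ a.e.\ in $D$ but vanishes a.e.\ on that level set, one gets $\zeta_{_{\beta,\Lambda}}=0$ a.e.\ there, a contradiction. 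Without this step your claim that substitution ``gives exactly \eqref{207}'' is not justified. The remaining items (choice of a Steiner-symmetric maximizer, $\mu_{_{\beta,\Lambda}}\ge0$, $\int_D\zeta_{_{\beta,\Lambda}}\,d\nu=1$ when $\mu_{_{\beta,\Lambda}}>0$, and compact support) are in line with the paper's argument, although the paper proves $\int_D\zeta_{_{\beta,\Lambda}}\,d\nu=1$ by a second variation argument comparing the two sets of multiplier relations rather than by bare complementary slackness.
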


\begin{proof}
We may take a sequence $\zeta_{j}\in \mathcal{A}_{\beta,\Lambda}$ such that as $j\to +\infty$
\begin{equation*}
  \begin{split}
        E_\beta(\zeta_{j}) & \to \sup\{E_\beta(\tilde{\zeta})~|~\tilde{\zeta}\in \mathcal{A}_{\beta,\Lambda}\}, \\
        \zeta_{j} & \to \zeta\in L^{10/7}({D,\nu})~~\text{weakly}.
  \end{split}
\end{equation*}
It is easily checked that $\zeta\in \mathcal{A}_{\beta,\Lambda}$. Using the standard arguments (see Lemma 4.6 of \cite{Dou2}), we may assume that $\zeta_{j}=(\zeta_{j})^*$, and hence $\zeta=(\zeta)^*$. By Lemma 4.8 of \cite{Dou2}, we first have
\begin{equation*}
      \lim_{j\to +\infty}\int_D{\zeta_{j} \mathcal{K}\zeta_{j}}d\nu = \int_D{\zeta \mathcal{K}\zeta}d\nu,\ \text{as}\ j\to +\infty.
\end{equation*}
On the other hand, we have the lower semicontinuity of the rest of terms, namely,
\begin{equation*}
  \begin{split}
    \liminf_{j\to +\infty} \int_{D}r^2\zeta_j d\nu  & \ge  \int_{D}r^2\zeta d\nu, \\
    \liminf_{j\to +\infty}\int_D r^2(\zeta_j-\frac{\alpha}{\beta})_{+}^2d\nu   & \ge \int_D r^2(\zeta-\frac{\alpha}{\beta})_{+}^2d\nu.
  \end{split}
\end{equation*}
Consequently, we may conclude that $E_\beta(\zeta)=\lim_{j\to +\infty}E_\beta(\zeta_j)=\sup E_\beta$, with $\zeta\in \mathcal{A}_{\beta,\Lambda}$.

We now turn to prove $\eqref{207}$. Note that we may assume $\zeta \not\equiv 0$, otherwise the conclusion is obtained by letting $\mu_{_{\beta,\Lambda}}=0$.
We consider the family of variations of $\zeta$
\begin{equation*}
  \zeta_{(s)}=\zeta+s(\tilde{\zeta}-\zeta),\ \ \ s\in[0,1],
\end{equation*}
defined for arbitrary $\tilde{\zeta}\in \mathcal{A}_{\beta,\Lambda}$. Since $\zeta$ is a maximizer, we have
\begin{equation*}
  \begin{split}
     0 & \ge \frac{d}{ds}E(\zeta_{(s)})|_{s=0^+} \\
       & =\int_{D}(\tilde{\zeta}-\zeta)\big(\mathcal{K}\zeta-\frac{Wr^2}{2} \log{\frac{1}{\beta}}-\beta^2r^2(\zeta-\frac{\alpha}{\beta})_+\big)d\nu.
  \end{split}
\end{equation*}
This implies that for any  $\tilde{\zeta}\in \mathcal{A}_{\beta,\Lambda}$, there holds
\begin{equation*}
  \int_{D}\zeta \big(\mathcal{K}\zeta-\frac{Wr^2}{2} \log{\frac{1}{\beta}}-\beta^2r^2(\zeta-\frac{\alpha}{\beta})_+\big)d\nu \ge \int_{D}\tilde{\zeta}  \big(\mathcal{K}\zeta-\frac{Wr^2}{2} \log{\frac{1}{\beta}}-\beta^2r^2(\zeta-\frac{\alpha}{\beta})_+\big)d\nu.
\end{equation*}
By an adaptation of the bathtub principle \cite{Lieb}, we obtain
\begin{equation}\label{208}
  \begin{split}
    \mathcal{K}\zeta-\frac{Wr^2}{2} \log{\frac{1}{\beta}}-\mu_{_{\beta, \Lambda}} &\ge \beta^2r^2(\zeta-\frac{\alpha}{\beta})_+ \ \ \  whenever\  \zeta=\frac{\Lambda}{\beta^2}, \\
     \mathcal{K}\zeta-\frac{Wr^2}{2} \log{\frac{1}{\beta}}-\mu_{_{\beta, \Lambda}} &=\beta^2r^2(\zeta-\frac{\alpha}{\beta})_+ \ \ \  whenever\  0<\zeta<\frac{\Lambda}{\beta^2}, \\
    \mathcal{K}\zeta-\frac{Wr^2}{2} \log{\frac{1}{\beta}}-\mu_{_{\beta, \Lambda}} & \le \beta^2r^2(\zeta-\frac{\alpha}{\beta})_+ \ \ \  whenever\  \zeta=0,
  \end{split}
\end{equation}
where $$\mu_{_{\beta, \Lambda}}=\inf\{t:|\{\mathcal{K}\zeta-\frac{Wr^2}{2} \log{\frac{1}{\beta}}-\beta^2r^2(\zeta-\frac{\alpha}{\beta})_+>t\}|\le 1\}\in \mathbb{R}_+.$$
It follows that $\{0<\zeta\le \alpha/\beta\}\subseteq\{\mathcal{K}\zeta-\frac{Wr^2}{2} \log{\frac{1}{\beta}}=\mu_{_{\beta, \Lambda}}\}$. Since $\mathcal{L}(\mathcal{K}\zeta-\frac{Wr^2}{2} \log{\frac{1}{\beta}})=\zeta$ almost everywhere in $D$, we conclude that $m_2(\{0<\zeta\le \alpha/\beta\})=0$. Now the stated form $\eqref{207}$ follows immediately.

When $E_\beta(\zeta)>0$ and $\mu_{_{\beta,\Lambda}}>0$, we first prove $\int_D\zeta d\nu=1$. Suppose not, then we may consider the family of variations of $\zeta$
\begin{equation*}
  \zeta_{(s)}=\zeta+s\phi,\ \ \ s>0,
\end{equation*}
defined for arbitrary $\phi \in L^1\cap L^{\infty}(D)$ satisfying $\phi\ge 0$ a.e. on $\{\zeta<\delta\}$ and $\phi\le 0$ a.e. on $\{\zeta>\Lambda/\beta^2-\delta\}$ for some $\delta>0$. Clearly, $\zeta_{(s)}\in \mathcal{A}_{\beta,\Lambda}$ for all sufficiently small $s>0$.
We then have
\begin{equation*}
  \begin{split}
     0 & \ge \frac{d}{ds}E(\zeta_{(s)})|_{s=0^+} \\
       & =\int_{D}\phi(\mathcal{K}\zeta-\frac{Wr^2}{2} \log{\frac{1}{\beta}}-\beta^2r^2(\zeta-\frac{\alpha}{\beta})_+)d\nu,
  \end{split}
\end{equation*}
which implies that
\begin{equation}\label{209}
  \begin{split}
         \mathcal{K}\zeta-\frac{Wr^2}{2} \log{\frac{1}{\beta}} &\ge \beta^2r^2(\zeta-\frac{\alpha}{\beta})_+ \ \ \  whenever\  \zeta=\frac{\Lambda}{\beta^2}, \\
     \mathcal{K}\zeta-\frac{Wr^2}{2} \log{\frac{1}{\beta}} &=\beta^2r^2(\zeta-\frac{\alpha}{\beta})_+ \ \ \  whenever\  0<\zeta<\frac{\Lambda}{\beta^2}, \\
    \mathcal{K}\zeta-\frac{Wr^2}{2} \log{\frac{1}{\beta}} & \le \beta^2r^2(\zeta-\frac{\alpha}{\beta})_+ \ \ \  whenever\  \zeta=0.
  \end{split}
\end{equation}
Combining $\eqref{208}$ and $\eqref{209}$, we conclude that
\begin{equation*}
  \{\zeta>0\}=\{\mathcal{K}\zeta-\frac{Wr^2}{2} \log{\frac{1}{\beta}}>0\}=\{ \mathcal{K}\zeta-\frac{Wr^2}{2} \log{\frac{1}{\beta}}>\mu_{_{\beta, \Lambda}}\},
\end{equation*}
which is clearly a contradiction. Note that $\mathcal{K}\zeta=0$ on $\partial D$ and $\mathcal{K}\zeta$ is Steiner-symmetric, it is not hard to prove that $\zeta$ has compact support in $D$ (\,see, e.g. \cite{BB, FT, Tur89}\,). The proof is thus completed.
\end{proof}

Our focus is next the asymptotic behaviour of $\zeta_{_{\beta,\Lambda}}$ when $\beta \to 0^+$. In the sequel we shall denote $C,C_1,C_2, ..., $ for positive constants independent of $\beta$.

The following lemma gives a lower bound of the energy.
\begin{lemma}\label{le4}
  For any  $a\in(0,d)$, there exists $C>0$ such that for all $\beta$ sufficiently small, we have
\begin{equation*}
  E_\beta(\zeta_{_{\beta,\Lambda}})\ge (\frac{a}{4\pi}-\frac{Wa^2}{2})\log{\frac{1}{\beta}}-C,
\end{equation*}
where the positive number $C$ depends on $a$, but not on $\beta$, $\Lambda$.
\end{lemma}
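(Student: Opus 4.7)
The plan is to exploit the fact that $\zeta_{_{\beta,\Lambda}}$ maximizes $E_\beta$ over $\mathcal{A}_{\beta,\Lambda}$, so I only need to exhibit an explicit test function $\zeta_0 \in \mathcal{A}_{\beta,\Lambda}$ for which $E_\beta(\zeta_0) \ge \bigl(\frac{a}{4\pi} - \frac{Wa^2}{2}\bigr)\log\frac{1}{\beta} - C$. The natural candidate is $\zeta_0 = \beta^{-2}\chi_{B}$, where $B = B_{c\beta}((a,0))$ and the radius parameter $c = c(a)>0$ is chosen so that $\nu(B) = \beta^2$ to leading order, i.e.\ $c \approx 1/\sqrt{\pi a}$ (adjust by $O(\beta)$ to make $\int\zeta_0\,d\nu \le 1$ exactly). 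With this choice, $\zeta_0 \le 1/\beta^2 \le \Lambda/\beta^2$ because $\Lambda > 1$, and $B\subset D$ for all sufficiently small $\beta$ since $a\in(0,d)$; hence $\zeta_0\in \mathcal{A}_{\beta,\Lambda}$, and the constants in the estimates below will depend on $a$ only (not on $\Lambda$).

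The heart of the argument is the quadratic term. Writing
\[
\tfrac{1}{2}\int_D \zeta_0\,\mathcal{K}\zeta_0\,d\nu \;=\; \frac{1}{2\beta^4}\int_B\!\!\int_B K(x,y)\,rr'\,dm(x)\,dm(y),
\]
I would use the splitting $K = G - H$ of Lemma \ref{le1} (with $H$ smooth, hence bounded on $B\times B$) together with the expansion of Lemma \ref{le2}. On $B\times B$ one has $rr' = a^2 + O(\beta)$ and $\sigma = |x-y|/(2a) + O(\beta)$, so
\[
K(x,y)\,rr' \;=\; \frac{a^3}{2\pi}\log\frac{1}{|x-y|} + O(1)
\]
uniformly on $B\times B$. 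Rescaling $x = (a,0) + c\beta\,u$ gives
\[
\int_B\!\!\int_B \log\frac{1}{|x-y|}\,dm(x)\,dm(y) \;=\; \pi^2 c^4 \beta^4 \log\frac{1}{\beta} + O(\beta^4),
\]
and with $c^2 = 1/(\pi a)$ this collapses to $\frac{1}{2}\int \zeta_0 \mathcal{K}\zeta_0\,d\nu = \frac{a}{4\pi}\log\frac{1}{\beta} + O(1)$. The other two terms are easy: since $r = a + O(\beta)$ on $B$ and $\int\zeta_0\,d\nu = 1$, we have $\int r^2\zeta_0\,d\nu = a^2 + O(\beta)$, contributing $-\frac{Wa^2}{2}\log\frac{1}{\beta} + o(1)$; and the penalty term satisfies
\[
\frac{\beta^2}{2}\int_D r^2\Bigl(\zeta_0-\tfrac{\alpha}{\beta}\Bigr)_+^{2}d\nu \;\le\; \frac{\beta^2}{2}\int_B r^2\zeta_0^{2}\,d\nu \;\le\; C(a),
\]
because $r^2\zeta_0^2 \lesssim a^2/\beta^4$ on $B$ and $\nu(B) = \beta^2$. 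Summing the three contributions delivers the stated lower bound.

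The main obstacle, and the step that needs real care, is controlling the error in the Green's-function expansion uniformly over $B\times B$: one must verify that the $H$-term, the subleading $\log(1+\sqrt{\sigma^2+1})$ piece, and the $f\sqrt{rr'}$ term in Lemma \ref{le2}, together with the $O(\beta)$ perturbations of $rr'$ and $\sigma$, all produce only $O(1)$ contributions after division by $\beta^4$. The choice $c=1/\sqrt{\pi a}$ is essentially forced: taking a smaller ball would blow up $\zeta_0$ beyond $\Lambda/\beta^2$ for general $\Lambda$ close to $1$, while a much larger ball would degrade the coefficient of $\log(1/\beta)$ in the quadratic term. Once the $\beta$-uniformity is established, the inequality $E_\beta(\zeta_{_{\beta,\Lambda}}) \ge E_\beta(\zeta_0)$ closes the argument.
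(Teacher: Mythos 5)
Your proposal is correct and follows essentially the same route as the paper: the paper also takes the test function $\tilde{\zeta}=\beta^{-2}\chi_{B_{\beta/\sqrt{a\pi}}((a,0))}$, invokes maximality $E_\beta(\zeta_{_{\beta,\Lambda}})\ge E_\beta(\tilde{\zeta})$, and estimates the kernel term via Lemmas \ref{le1} and \ref{le2} exactly as you outline, with the velocity and penalty terms handled the same way. (Minor remark: no $O(\beta)$ adjustment of the radius is needed, since by symmetry $\int_{B_\rho((a,0))} r\,dr\,dz = a\pi\rho^2$ exactly, so $\int\tilde{\zeta}\,d\nu=1$ with $\rho=\beta/\sqrt{a\pi}$.)
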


\begin{proof}
Choose a test function $\tilde{\zeta} \in \mathcal{A}_{\beta,\Lambda}$ defined by $\tilde{\zeta}=\frac{1}{\beta^2} \chi_{_{B_{{\beta}/{\surd(a\pi)}}((a,0))}}$. Since $\zeta_{_{\beta,\Lambda}}$ is a maximizer, we have $E_\beta(\zeta_{_{\beta,\Lambda}})\ge E_\beta(\tilde{\zeta})$. By Lemmas $\ref{le1}$ and $\ref{le2}$, we obtain
 \begin{equation*}
\begin{split}
    E_\beta(\tilde{\zeta})&= \frac{1}{2}\iint_{D\times D}\tilde{\zeta}(r,z)K(r,z,r',z')\tilde{\zeta}(r',z')r'rdr'dz'drdz-{\frac{W}{2} \log{\frac{1}{\beta}}}\int_{D}r^2\tilde{\zeta}d\nu\\
                    &\ \ \ \ \ -\frac{\beta^2}{2}\int_D r^2(\tilde{\zeta}-\frac{\alpha}{\beta})_{+}^2d\nu\\
                    &\ge \frac{a+O(\beta)}{4\pi}\iint_{D\times D}\log{\frac{1}{[(r-r')^2+(z-z')^2]^{1/2}}}\tilde{\zeta}(r,z)\tilde{\zeta}(r',z')r'rdr'dz'drdz\\
                    &\ \ \ \ \ -\frac{W[a+O(\beta)]^2}{2}\log{\frac{1}{\beta}}-\frac{\beta^2}{2}\int_Dr^2{\tilde{\zeta}}^2d\nu-C_1\\
                    &\ge (\frac{a}{4\pi}-\frac{Wa^2}{2})\log{\frac{1}{\beta}}-C.
\end{split}
\end{equation*}
We therefore complete the proof.
\end{proof}

We now turn to estimate the Lagrange multiplier $\mu_{_{\beta,\Lambda}}$.
\begin{lemma}\label{ad1}
  For all sufficiently small $\beta$, there holds
  \begin{equation}\label{0-1}
    \mu_{_{\beta,\Lambda}}=2E_\beta(\zeta_{_{\beta,\Lambda}})+\frac{W}{2}\log\frac{1}{\beta}\int_D\zeta_{_{\beta,\Lambda}} r^2 d\nu +O(1),
  \end{equation}
 where the bounded quantity $O(1)$ does not depend on $\beta$ and $\Lambda$.
\end{lemma}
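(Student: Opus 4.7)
The strategy is to multiply the Euler--Lagrange characterization of $\zeta_{\beta,\Lambda}$ from Lemma \ref{le3} by $\zeta_{\beta,\Lambda}$ itself and integrate, converting $\mu_{\beta,\Lambda}$ into a functional of $\zeta_{\beta,\Lambda}$ already encoded in $E_\beta$. First I verify $\int_D \zeta_{\beta,\Lambda}\, d\nu = 1$: by Lemma \ref{le4}, $E_\beta(\zeta_{\beta,\Lambda})>0$ for all sufficiently small $\beta$, and $\mu_{\beta,\Lambda}>0$ follows (as at the end of the proof of Lemma \ref{le3}) by the contradiction argument combining the inequalities \eqref{208} and \eqref{209}; the normalization then comes from Lemma \ref{le3}. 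I shall freely use that $\zeta_{\beta,\Lambda}>\alpha/\beta$ on its support, as established in that same proof.

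Integrating the defining relation \eqref{001} against $\zeta_{\beta,\Lambda}\, d\nu$ and using the identity
\begin{equation*}
\int_D \zeta\, \mathcal{K}\zeta\, d\nu = 2E_\beta(\zeta) + W\log\tfrac{1}{\beta}\!\int_D r^2\zeta\, d\nu + \beta^2\!\int_D r^2(\zeta-\alpha/\beta)_+^2\, d\nu
\end{equation*}
(which is just the definition of $E_\beta$ rearranged), I obtain
\begin{equation*}
\mu_{\beta,\Lambda} \;=\; 2E_\beta(\zeta_{\beta,\Lambda}) + \tfrac{W}{2}\log\tfrac{1}{\beta}\!\int_D r^2\zeta_{\beta,\Lambda}\, d\nu + \mathcal{R}_\beta,
\end{equation*}
with remainder $\mathcal{R}_\beta := \beta^2\!\int_D r^2(\zeta_{\beta,\Lambda}-\alpha/\beta)_+^2\, d\nu - \int_D \zeta_{\beta,\Lambda}\,\psi_{\beta,\Lambda}\, d\nu$. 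The whole game is to show $\mathcal{R}_\beta=O(1)$ uniformly in both $\beta$ and $\Lambda$.

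To bound $\mathcal{R}_\beta$ I exploit \eqref{207} pointwise. On the non-saturated region $\{0<\zeta_{\beta,\Lambda}<\Lambda/\beta^2\}$, the relation $\psi_{\beta,\Lambda}=\beta^2 r^2(\zeta_{\beta,\Lambda}-\alpha/\beta)$ together with $\zeta_{\beta,\Lambda}>\alpha/\beta$ gives the pointwise cancellation $\beta^2 r^2(\zeta-\alpha/\beta)_+^2 - \zeta\psi = -\alpha\beta r^2(\zeta-\alpha/\beta)$, which integrates to a term of order $\alpha\beta$ (using $r\le d$, $\int\zeta\, d\nu=1$, and $\nu(\mathrm{supp}\,\zeta_{\beta,\Lambda})\le\beta/\alpha$ when $\alpha>0$). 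On the saturated set $B:=\{\zeta_{\beta,\Lambda}=\Lambda/\beta^2\}$ one has $\nu(B)\le\beta^2/\Lambda$, and the nonnegative function $u:=\psi_{\beta,\Lambda}-(\Lambda-\alpha\beta)r^2$ vanishes on $\partial B$ and satisfies $\mathcal{L}u = \Lambda/\beta^2$ in $B^\circ$ (because $\mathcal{L}(r^2)=0$). A 2D elliptic maximum-principle estimate on $B$ then yields $\sup_B u\le C\,(\Lambda/\beta^2)\, m_2(B)\le C$, uniformly in $\beta,\Lambda$, from which $(\Lambda/\beta^2)\int_B u\, d\nu\le C$; this is exactly the Case $B$ contribution to $\mathcal{R}_\beta$.

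The main obstacle is making the elliptic estimate on $B$ truly uniform in $\Lambda$: this requires that $\mathrm{supp}\,\zeta_{\beta,\Lambda}$ stay bounded away from the axis $\{r=0\}$ independent of $\Lambda$. This I handle in a bootstrap: the easy one-sided inequality $\int \zeta f\, d\nu \ge \mu_{\beta,\Lambda}$ (which holds with $f := \mathcal{K}\zeta - \tfrac{Wr^2}{2}\log(1/\beta) - \beta^2 r^2(\zeta-\alpha/\beta)_+$, by \eqref{208}) gives $\mu_{\beta,\Lambda}\le 2E_\beta + \tfrac{W}{2}\log(1/\beta)\!\int r^2\zeta\, d\nu + O(\alpha\beta)$; combined with the near-axis Green-function bound $\mathcal{K}\zeta(r,z)\lesssim r$ that follows from Lemma \ref{le2} (since $\sinh^{-1}(1/\sigma)\sim 1/\sigma$ when $\sigma$ is large, and $\sigma\to\infty$ as $r\to 0^+$), this forces $\psi_{\beta,\Lambda}<0$ in a neighborhood of $\{r=0\}$ whose size is controlled uniformly, anchoring the support off the axis and making the elliptic estimate on $B$ applicable with constants independent of $\Lambda$.
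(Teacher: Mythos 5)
Your reduction to showing $\mathcal{R}_\beta=O(1)$, and your pointwise cancellation on the non-saturated set $\{0<\zeta_{\beta,\Lambda}<\Lambda/\beta^2\}$ (yielding the harmless $\alpha\beta r^2(\zeta_{\beta,\Lambda}-\alpha/\beta)$ term), agree with the paper. The genuine gap is in your treatment of the saturated set. You propose a maximum-principle bound $\sup_B u\le C(\Lambda/\beta^2)m_2(B)$ for $u=\psi_{\beta,\Lambda}-(\Lambda-\alpha\beta)r^2$, but (i) $m_2(B)$ is only controlled by $\nu(B)/\inf_B r\le \beta^2/(\Lambda \inf_B r)$, and the ABP/comparison constant for $\mathcal{L}$ (which in non-divergence form is $-(1/r^2)\Delta+(1/r^3)\partial_r$) degenerates as $r\to 0$ and also needs control of $\mathrm{diam}(B)$; so everything hinges on the support being bounded away from the axis \emph{uniformly in} $\Lambda$, which at this stage of the argument is not available (the localization Lemmas come after, via Lemma \ref{le5}, which is itself a consequence of the present lemma). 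Your ``bootstrap'' does not close this: an \emph{upper} bound on $\mu_{\beta,\Lambda}$ cannot force $\psi_{\beta,\Lambda}<0$ near $\{r=0\}$ — for that you need $\mathcal{K}\zeta_{\beta,\Lambda}<\tfrac{W}{2}r^2\log\tfrac1\beta+\mu_{\beta,\Lambda}$, and with only $\mu_{\beta,\Lambda}\ge 0$ and $\mathcal{K}\zeta_{\beta,\Lambda}\lesssim r$ this fails precisely in the range $r\lesssim 1/\log\tfrac1\beta$ (one would need a \emph{lower} bound $\mu_{\beta,\Lambda}\gtrsim\log\tfrac1\beta$, i.e.\ Lemma \ref{le5}, which is circular here); moreover the bound $\mathcal{K}\zeta_{\beta,\Lambda}\lesssim r$ uniform in $\Lambda$ itself presupposes the support is away from the axis, since the near-field logarithm in Lemma \ref{le2} otherwise contributes factors of $\log\Lambda$. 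The paper avoids all of this: it tests the equation with $U_{\beta,\Lambda}=[\psi_{\beta,\Lambda}-(\Lambda-\alpha\beta)r^2]_+\in H(D)$ to get $\int_D|\nabla U_{\beta,\Lambda}|^2 r^{-2}\,d\nu=(\Lambda/\beta^2)\int_D U_{\beta,\Lambda}\,d\nu$, then uses H\"older, $r\le d$, the Sobolev embedding $W^{1,1}\hookrightarrow L^2$ in two dimensions, and $(\Lambda/\beta^2)\,|\{U_{\beta,\Lambda}\ge 0\}|\le 1$ to absorb the right-hand side — no information on the location, diameter, or distance to the axis of the vortex core is needed. If you want a self-contained proof you should replace your elliptic estimate by this weighted-energy argument (or supply a genuinely $\Lambda$-uniform axis-exclusion argument, which I do not see how to do before this lemma).

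There is also a flaw in your opening step. You claim $\mu_{\beta,\Lambda}>0$ ``follows by the contradiction argument combining \eqref{208} and \eqref{209}''; it does not — that argument in Lemma \ref{le3} \emph{assumes} $\mu_{\beta,\Lambda}>0$ (and $\int_D\zeta_{\beta,\Lambda}\,d\nu<1$) to reach a contradiction, so it proves the implication ``$E_\beta>0$, $\mu_{\beta,\Lambda}>0\ \Rightarrow\ \int_D\zeta_{\beta,\Lambda}\,d\nu=1$,'' not positivity of $\mu_{\beta,\Lambda}$. The correct order (and the paper's) is to derive the identity with $\mu_{\beta,\Lambda}\int_D\zeta_{\beta,\Lambda}\,d\nu$ on the left, show the remainder is $O(1)$ uniformly, and only then use Lemma \ref{le4} (the energy lower bound $\to+\infty$) to conclude $\mu_{\beta,\Lambda}>0$, hence $\int_D\zeta_{\beta,\Lambda}\,d\nu=1$, and finally \eqref{0-1}. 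This part of your proposal is fixable by reordering, but as written it is circular.
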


\begin{proof}
  Recalling \eqref{208}, we have
\begin{equation*}
  \begin{split}
     2E_\beta(\zeta_{_{\beta,\Lambda}}) &= \int_D{\zeta_{_{\beta,\Lambda}} \mathcal{K}\zeta_{_{\beta,\Lambda}}}d\nu-{W\log{\frac{1}{\beta}}}\int_{D}r^2\zeta_{_{\beta,\Lambda}} d\nu-\int_D r^2\beta^2(\zeta_{_{\beta,\Lambda}}-\frac{\alpha}{\beta})_{+}^2d\nu\\
       &= \int_D{\zeta_{_{\beta,\Lambda}} \big(\mathcal{K}\zeta_{_{\beta,\Lambda}}-\frac{W}{2}r^2\log{\frac{1}{\beta}}-\mu_{_{\beta,\Lambda}}\big)}d\nu-{\frac{W}{2}\log{\frac{1}{\beta}}}\int_{D}r^2 \zeta_{_{\beta,\Lambda}} d\nu\\
       &\ \ \ \ -\int_D r^2\beta^2(\zeta_{_{\beta,\Lambda}}-\frac{\alpha}{\beta})_{+}^2d\nu+\mu_{_{\beta,\Lambda}} \int_D\zeta_{_{\beta,\Lambda}} d\nu \\
       &= \int_{\{\frac{\alpha}{\beta}<\zeta_{_{\beta,\Lambda}}<\frac{\Lambda}{\beta^2} \}}{\zeta_{_{\beta,\Lambda}} r^2\beta^2(\zeta_{_{\beta,\Lambda}}-\frac{\alpha}{\beta})_+}d\nu+\int_{\{\zeta_{_{\beta,\Lambda}}=\frac{\Lambda}{\beta^2}\}}\zeta_{_{\beta,\Lambda}}\psi_{\beta,\Lambda}d\nu \\
       &\ \ \ \ -{\frac{W}{2}\log{\frac{1}{\beta}}}\int_{D}r^2\zeta_{_{\beta,\Lambda}} d\nu-\int_D r^2\beta^2(\zeta_{_{\beta,\Lambda}}-\frac{\alpha}{\beta})_{+}^2d\nu+\mu_{_{\beta,\Lambda}} \int_D\zeta_{_{\beta,\Lambda}} d\nu\\
       &= \int_D{\alpha \beta r^2(\zeta_{_{\beta,\Lambda}}-\frac{\alpha}{\beta})_+}d\nu+\int_{\{\zeta_{_{\beta,\Lambda}}=\frac{\Lambda}{\beta^2} \}}\zeta_{_{\beta,\Lambda}} \big[\psi_{_{\beta,\Lambda}}-(\Lambda-\alpha\beta)r^2\big]_+d\nu\\
       &\ \ \ \ -{\frac{W}{2}\log{\frac{1}{\beta}}}\int_{D}r^2\zeta_{_{\beta,\Lambda}} d\nu+\mu_{_{\beta,\Lambda}} \int_D\zeta_{_{\beta,\Lambda}} d\nu.
  \end{split}
\end{equation*}
Hence
\begin{equation}\label{0-2}
  \mu_{_{\beta,\Lambda}}\int_D\zeta_{_{\beta,\Lambda}} d\nu=2E_\beta(\zeta_{_{\beta,\Lambda}})+\frac{W}{2}\log\frac{1}{\beta}\int_D\zeta_{_{\beta,\Lambda}} r^2 d\nu -\int_D{\alpha \beta r^2(\zeta_{_{\beta,\Lambda}}-\frac{\alpha}{\beta})_+}d\nu-\frac{\Lambda}{\beta^2}\int_D U_{\beta,\Lambda}d\nu,
\end{equation}
where $U_{\beta,\Lambda}:=\big[\psi_{_{\beta,\Lambda}}-(\Lambda-\alpha\beta)r^2\big]_+$. Next, we prove that the last two terms on the right hand side of \eqref{0-2} are uniformly bounded with respect to $\beta$ and $\Lambda$. Recall that
\begin{equation*}
  \mathcal{L}\psi_{_{\beta,\Lambda}}=\big\{\frac{\psi_{_{\beta,\Lambda}}}{r^2\beta^2}+\frac{\alpha}{\beta}\big\}{\chi}_{_{\{0<\psi_{_{\beta,\Lambda}}<(\Lambda-\alpha\beta)r^2\}}}+\frac{\Lambda}{\beta^2}\chi_{_{\{\psi \ge (\Lambda-\alpha\beta)r^2\}}}.
\end{equation*}
If we take $U_{\beta,\Lambda}\in H(D)$ as a test function, we obtain
\begin{equation}\label{0-3}
\begin{split}
   \int_D\frac{|\nabla U_{\beta,\Lambda}|^2}{r^2} d\nu&= \frac{\Lambda}{\beta^2}\int_D U_{\beta,\Lambda} d\nu \\
     & \le \frac{\Lambda}{\beta^2}|\{U_{\beta,\Lambda}\ge 0\}|^{\frac{1}{2}}\Big(\int_D U_{\beta,\Lambda}^2 d\nu \Big)^{\frac{1}{2}}\\
     & \le \frac{\Lambda d^{\frac{1}{2}}}{\beta^2}|\{U_{\beta,\Lambda}\ge 0\}|^{\frac{1}{2}}\Big(\int_D U_{\beta,\Lambda}^2 drdz \Big)^{\frac{1}{2}}\\
     & \le \frac{C\Lambda d^{\frac{1}{2}}}{\beta^2}|\{U_{\beta,\Lambda}\ge 0\}|^{\frac{1}{2}}\Big(\int_D |\nabla U_{\beta,\Lambda}| drdz \Big)\\
     &  \le \frac{C\Lambda d^{\frac{1}{2}}}{\beta^2}|\{U_{\beta,\Lambda}\ge 0\}|\Big(\int_D \frac{|\nabla U_{\beta,\Lambda}|^2 }{r^2}d\nu \Big)^{\frac{1}{2}}\\
     & \le Cd^{\frac{1}{2}}\Big(\int_D \frac{|\nabla U_{\beta,\Lambda}|^2 }{r^2}d\nu \Big)^{\frac{1}{2}}.
\end{split}
\end{equation}
where we used H$\ddot{\text{o}}$der's inequality and Sobolev's inequality, and the positive constant $C$  does not depend on $\beta$ and $\Lambda$. From \eqref{0-3} we conclude that the rightmost term of \eqref{0-2} is uniformly bounded with respect to $\beta$ and $\Lambda$. Finally, note that
\begin{equation*}
  \int_D{\alpha \beta r^2(\zeta_{_{\beta,\Lambda}}-\frac{\alpha}{\beta})_+}d\nu\le \alpha \beta d^2 \int_D \zeta_{_{\beta,\Lambda}}d\nu \le \alpha d^2.
\end{equation*}
Therefore
\begin{equation}\label{0-0}
   \mu_{_{\beta,\Lambda}}\int_D\zeta_{_{\beta,\Lambda}} d\nu=2E_\beta(\zeta_{_{\beta,\Lambda}})+\frac{W}{2}\log\frac{1}{\beta}\int_D\zeta_{_{\beta,\Lambda}} r^2 d\nu +O(1),
\end{equation}
where the bounded quantity $O(1)$ does not depend on $\beta$ and $\Lambda$. From \eqref{0-0} and Lemma \ref{le4}, we conclude that $E_\beta(\zeta_{_{\beta,\Lambda}})>0$ and $\mu_{_{\beta,\Lambda}}>0$ when $\beta$ is small enough. This further implies that $\int_D\zeta_{_{\beta,\Lambda}} d\nu=1$ for all sufficiently small $\beta$. Hence we obtain \eqref{0-1}, and the proof is completed.
\end{proof}

From Lemmas \ref{le4} and \ref{ad1}, we immediately have the following estimate.
\begin{lemma}\label{le5}
For any  $a\in(0,{d})$, there exists $C>0$ such that for all $\beta$ sufficiently small, we have
\begin{equation}\label{211}
  \mu_{_{\beta,\Lambda}}\ge (\frac{a}{2\pi}-{Wa^2})\log{\frac{1}{\beta}}+\frac{W}{2}\log{\frac{1}{\beta}}\int_D\zeta_{_{\beta,\Lambda}} r^2d\nu-C.
\end{equation}
where the positive number $C$ depends on $a$, but not on $\beta$, $\Lambda$.
\end{lemma}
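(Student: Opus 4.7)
The statement follows essentially by arithmetic from the two preceding lemmas, so the plan is quite short. My plan is to treat Lemma \ref{le5} as an immediate corollary of Lemma \ref{ad1} together with Lemma \ref{le4}.

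First, I would invoke Lemma \ref{ad1}, which gives the identity
\begin{equation*}
  \mu_{_{\beta,\Lambda}} = 2E_\beta(\zeta_{_{\beta,\Lambda}}) + \frac{W}{2}\log\frac{1}{\beta}\int_D \zeta_{_{\beta,\Lambda}} r^2 \, d\nu + O(1),
\end{equation*}
valid for all sufficiently small $\beta$, where the $O(1)$ term is bounded independently of $\beta$ and $\Lambda$. Note that Lemma \ref{ad1} already guarantees that for small $\beta$ we have $E_\beta(\zeta_{_{\beta,\Lambda}})>0$, $\mu_{_{\beta,\Lambda}}>0$ and $\int_D \zeta_{_{\beta,\Lambda}} d\nu = 1$, so all objects on the right-hand side are controlled.

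Second, I would apply Lemma \ref{le4} to estimate the energy from below. For the fixed $a \in (0,d)$ this yields, for all $\beta$ sufficiently small,
\begin{equation*}
  2E_\beta(\zeta_{_{\beta,\Lambda}}) \ge \left(\frac{a}{2\pi}-Wa^2\right)\log\frac{1}{\beta} - 2C_a,
\end{equation*}
with $C_a$ depending only on $a$ (in particular independent of $\beta$ and $\Lambda$).

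Finally I would substitute this bound into the identity from Lemma \ref{ad1} and absorb the two $\beta$/$\Lambda$-uniform error terms into a single constant $C>0$, arriving at
\begin{equation*}
  \mu_{_{\beta,\Lambda}} \ge \left(\frac{a}{2\pi}-Wa^2\right)\log\frac{1}{\beta} + \frac{W}{2}\log\frac{1}{\beta}\int_D \zeta_{_{\beta,\Lambda}} r^2 \, d\nu - C,
\end{equation*}
which is precisely the claimed inequality. There is no real obstacle here, since all the analytic work (the bathtub characterisation of $\zeta_{_{\beta,\Lambda}}$, the $H(D)$-test-function argument bounding $\frac{\Lambda}{\beta^2}\int_D U_{\beta,\Lambda}\, d\nu$, and the trial-function computation for the lower bound on $E_\beta$) has already been carried out in Lemmas \ref{le3}--\ref{le4}; the only care needed is to verify that the constants entering Lemmas \ref{le4} and \ref{ad1} do not depend on $\Lambda$, which is explicit in their statements.
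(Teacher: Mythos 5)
Your proposal is correct and is exactly the paper's argument: the authors state Lemma \ref{le5} as an immediate consequence of Lemmas \ref{le4} and \ref{ad1}, i.e.\ substituting the energy lower bound into the identity $\mu_{_{\beta,\Lambda}}=2E_\beta(\zeta_{_{\beta,\Lambda}})+\frac{W}{2}\log\frac{1}{\beta}\int_D\zeta_{_{\beta,\Lambda}}r^2\,d\nu+O(1)$ and absorbing the $\beta$- and $\Lambda$-independent errors into one constant. Nothing is missing.
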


We now introduce the function $\Gamma_1$ as follows
\begin{equation*}
  \Gamma_1(t)=\frac{t}{2\pi}-Wt^2,\ t\in[0,d].
\end{equation*}
Set
\[
r_*=\left\{
   \begin{array}{lll}
        \frac{1}{4\pi W} &    \text{if} & W>{1}/{(4\pi d)}, \\
         d                  &    \text{if} & W\le {1}/{(4\pi d)}.
    \end{array}
   \right.
\]
It is easy to check that $\Gamma_1(r_*)=\max_{t\in[0,d]}\Gamma_1(t)$.
Let $A_\beta=\inf\{r|(r,0)\in {supp}(\zeta_{_{\beta,\Lambda}})\}$,\ $B_\beta=\sup\{r|(r,0)\in {supp}(\zeta_{_{\beta,\Lambda}})\}$.

Next, we show that in order to maximize the energy, these vortices constructed above
must be concentrated as $\beta$ tends to zero. We reach our goal by several steps as follows.
\begin{lemma}\label{le6}
  $\lim_{\beta\to 0^+}A_\beta=r_*$.
\end{lemma}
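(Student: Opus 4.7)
The plan is to pin down $A_\beta$ by matching sharp asymptotic upper and lower bounds for $\mathcal{K}\zeta_{_{\beta,\Lambda}}$ at the point $x_0=(A_\beta,0)$. Because $\zeta_{_{\beta,\Lambda}}=(\zeta_{_{\beta,\Lambda}})^*$ is Steiner-symmetric in $z$, the point $x_0$ belongs to $\mathrm{supp}(\zeta_{_{\beta,\Lambda}})$, and the same symmetry forces $r\geq A_\beta$ a.e.\ on $\mathrm{supp}(\zeta_{_{\beta,\Lambda}})$, so $\int\zeta_{_{\beta,\Lambda}}\,r^2\,d\nu\geq A_\beta^2$.

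For the lower bound, since $\psi_{_{\beta,\Lambda}}(x_0)\geq 0$, the defining formula \eqref{001} gives
\[
\mathcal{K}\zeta_{_{\beta,\Lambda}}(x_0)\ \geq\ \frac{WA_\beta^2}{2}\log\frac{1}{\beta}+\mu_{_{\beta,\Lambda}}.
\]
Inserting $\int\zeta_{_{\beta,\Lambda}}\,r^2\,d\nu\geq A_\beta^2$ into Lemma~\ref{le5} and optimizing over $a$ close to $r_*=1/(4\pi W)$ (which lies in $(0,d)$ by the hypothesis $W>1/(4\pi d)$ of Theorem~\ref{thm1}) yields
\[
\mathcal{K}\zeta_{_{\beta,\Lambda}}(x_0)\ \geq\ \bigl[\Gamma_1(r_*)+W A_\beta^2\bigr]\log\frac{1}{\beta}-C.
\]

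For the matching upper bound I would combine Lemmas~\ref{le1} and~\ref{le2} with a rearrangement (bathtub) argument. Writing $|x-x_0|$ for the planar Euclidean distance and noting that on the bounded strip $0<r<d$ the expansion in Lemma~\ref{le2} gives
\[
G(x_0,x)\ \leq\ \frac{\sqrt{A_\beta r'}}{2\pi}\log^+\!\frac{C}{|x-x_0|}+C\sqrt{A_\beta r'},
\]
the inequality $\sqrt{A_\beta r'}\leq A_\beta+\sqrt{A_\beta|x-x_0|}$ together with boundedness of $s^{1/2}\log^+(C/s)$ lets one replace the prefactor $\sqrt{A_\beta r'}$ by $A_\beta$ modulo a bounded remainder. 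The bathtub principle, applied to the constraints $\zeta_{_{\beta,\Lambda}}\leq \Lambda/\beta^2$ and $\int\zeta_{_{\beta,\Lambda}}\,d\nu=1$, then bounds $\int \log^+(C/|x-x_0|)\,\zeta_{_{\beta,\Lambda}}\,d\nu$ by $\log(1/\beta)+O(1)$, since the maximizing rearrangement is supported on a $\nu$-ball around $x_0$ of $\nu$-measure $\beta^2/\Lambda$ (a Euclidean ball of radius of order $\beta$). Altogether,
\[
\mathcal{K}\zeta_{_{\beta,\Lambda}}(x_0)\ \leq\ \frac{A_\beta}{2\pi}\log\frac{1}{\beta}+O(1).
\]

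Matching the two estimates and dividing through by $\log(1/\beta)$ leaves $\Gamma_1(A_\beta)=A_\beta/(2\pi)-WA_\beta^2\geq \Gamma_1(r_*)+o(1)$. Since $\Gamma_1$ attains its unique maximum on $[0,d]$ at the interior critical point $r_*$, this forces $A_\beta\to r_*$, as claimed. The part I expect to be the main obstacle is the upper bound: one must carefully isolate the genuinely logarithmic piece of the Green's function from the $\log(1+\sqrt{\sigma^2+1})$ and $f\sqrt{rr'}$ remainders of Lemma~\ref{le2}, handle the $(4rr')^{1/2}$ scaling hidden in $\sigma$ so that the constant in front of $\log(1/\beta)$ is sharply $A_\beta/(2\pi)$, and a priori rule out the degeneracy $A_\beta\to 0$ along a subsequence (this last point is easy, because the Green's function is bounded on any compact region avoiding $\{r=0\}$, so a vanishing $A_\beta$ cannot sustain the lower bound $\mathcal{K}\zeta_{_{\beta,\Lambda}}(x_0)\gtrsim \log(1/\beta)$).
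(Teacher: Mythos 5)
Your argument is correct, and it reaches the paper's conclusion by a noticeably different route on the upper-bound side. The lower-bound half (using $\psi_{_{\beta,\Lambda}}\ge 0$ at $(A_\beta,0)$, the Steiner symmetry to get $\int_D\zeta_{_{\beta,\Lambda}}r^2\,d\nu\ge A_\beta^2$, and Lemma \ref{le5}) is exactly the paper's. For the upper bound, the paper splits $\int G\,\zeta_{_{\beta,\Lambda}}\,d\nu$ at $\sigma=\beta^\gamma$: the far region is paid for with $\frac{d}{2\pi}\sinh^{-1}(\beta^{-\gamma})\approx\frac{\gamma d}{2\pi}\log\frac1\beta$, the near region is handled with the expansion of Lemma \ref{le2} and a rearrangement bound, and the resulting loss $\gamma d/(2\pi)$ in $\Gamma_1(A_\beta)\ge\Gamma_1(a)-\gamma d/(2\pi)-o(1)$ is removed only in the limit $\gamma\to0$, $a\to r_*$. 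You instead prove a single global pointwise bound $G\big((A_\beta,0),x\big)\le\frac{\sqrt{A_\beta r'}}{2\pi}\log^+\frac{C}{|x-x_0|}+C$, trade $\sqrt{A_\beta r'}$ for $A_\beta$ via $\sqrt{A_\beta r'}\le A_\beta+\sqrt{A_\beta|x-x_0|}$ and the boundedness of $s^{1/2}\log^+(C/s)$, and then one bathtub estimate (which, as you note, only improves when $A_\beta$ is small, so no degeneracy issue) gives $\mathcal{K}\zeta_{_{\beta,\Lambda}}(A_\beta,0)\le\frac{A_\beta}{2\pi}\log\frac1\beta+O(1)$ directly, with no auxiliary parameter $\gamma$. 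This is sharper at this stage — it is essentially the estimate the paper only derives later (Lemma \ref{ad2}, Lemma \ref{le11}) after concentration is known — and it buys a cleaner passage to $\Gamma_1(A_\beta)\ge\Gamma_1(r_*)-o(1)$. What the paper's cruder route buys instead is the intermediate inequality \eqref{214}, which retains the local mass term $\int_{B_{2d\beta^\gamma}}\zeta_{_{\beta,\Lambda}}\,d\nu$ and is reused verbatim in Lemmas \ref{le7}--\ref{diam1} to prove concentration of the vortex core, so your streamlined bound proves this lemma but does not replace that later tool. Two cosmetic remarks: you implicitly use $\int_D\zeta_{_{\beta,\Lambda}}\,d\nu=1$ (guaranteed for small $\beta$ by Lemma \ref{ad1}, as the paper does), and you argue only under $W>1/(4\pi d)$; the lemma as stated also covers $W\le1/(4\pi d)$ with $r_*=d$, but your argument extends verbatim since $\Gamma_1$ then has its unique maximum on $[0,d]$ at the endpoint $d$.
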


\begin{proof}
  Let $\sigma$ be defined by $\eqref{201}$ and $\gamma\in(0,1)$. Let $(r_\beta,z_\beta)\in {supp}(\zeta_{_{\beta,\Lambda}})$, then we have $$\mathcal{K}\zeta_{_{\beta,\Lambda}}(r_\beta,z_\beta)-\frac{W(r_\beta)^2}{2}\log{\frac{1}{\beta}}\ge \mu_{_{\beta,\Lambda}}.$$ By Lemma $\ref{le1}$, we know that
\begin{equation*}
\begin{split}
   \mathcal{K}\zeta_{_{\beta,\Lambda}}(r_\beta,z_\beta)&\le \int_{D}G(r_\beta,z_\beta,r',z')\zeta_{_{\beta,\Lambda}}(r',z')r'dr'dz'\\
                       &=\big(\int_{D\cap\{\sigma>\beta^\gamma\}}+\int_{D\cap\{\sigma \le \beta^\gamma\}}\big)G(r_\beta,z_\beta,r',z')\zeta_{_{\beta,\Lambda}}(r',z') r'dr'dz'\\
                       &:=I_1+I_2.
\end{split}
\end{equation*}

For the first term $I_1$, we can use $\eqref{Tadiewrong}$ to obtain
\begin{equation}\label{212}
\begin{split}
   I_1 &=\int\limits_{D\cap\{\sigma>\beta^\gamma\}}G(r_\beta,z_\beta,r',z')\zeta_{_{\beta,\Lambda}}(r',z') r'dr'dz'\\
       &\le \frac{(r_\beta)^{\frac{1}{2}}}{2\pi} \sinh^{-1}(\frac{1}{\beta^\gamma})\int\limits_{D\cap\{\sigma>\beta^\gamma\}}\zeta_{_{\beta,\Lambda}}(r',z')r'^{{3}/{2}}dr'dz'\\
       &\le \frac{d}{2\pi} \sinh^{-1}(\frac{1}{\beta^\gamma})\int\limits_{D\cap\{\sigma>\beta^\gamma\}}\zeta_{_{\beta,\Lambda}}(r',z')r'dr'dz'\\
       &\le \frac{d}{2\pi} \sinh^{-1}(\frac{1}{\beta^\gamma}).
\end{split}
\end{equation}
On the other hand, noticing that $D\cap\{\sigma \le \beta^\gamma\} \subseteq B_{2d\beta^\gamma}\big((r_\beta,z_\beta)\big)$, hence using Lemma $\ref{le2}$ we get
\begin{equation}\label{213}
\begin{split}
   I_2 &=\int\limits_{D\cap\{\sigma \le \beta^\gamma\}}G(r_\beta,z_\beta,r',z')\zeta_{_{\beta,\Lambda}}(r',z')r'dr'dz'\\
       &\le \frac{(r_\beta)^2+C\beta^\gamma}{2\pi}\int\limits_{D\cap\{\sigma \le \beta^\gamma\}}\log [(r_\beta-r')^2+(z_\beta-z')^2]^{-\frac{1}{2}}\zeta_{_{\beta,\Lambda}}(r',z')dr'dz'+C\\
       &\le \frac{(r_\beta)^2+C\beta^\gamma}{2\pi}\log\frac{1}{\beta} \int\limits_{D\cap\{\sigma \le \beta^\gamma\}}\zeta_{_{\beta,\Lambda}}(r',z')dr'dz'+C\\
       &\le \frac{(r_\beta)^2}{2\pi}\log\frac{1}{\beta} \int\limits_{D\cap\{\sigma \le \beta^\gamma\}}\zeta_{_{\beta,\Lambda}}(r',z')dr'dz'+C\\
       &\le \frac{r_\beta}{2\pi}\log\frac{1}{\beta} \int\limits_{B_{2d\beta^\gamma}\big((r_\beta,z_\beta)\big)}\zeta_{_{\beta,\Lambda}} d\nu+C,
\end{split}
\end{equation}
where we have used an easy rearrangement inequality. Therefore, by Lemma $\ref{le5}$, we conclude that for any $a\in(0,d)$ there holds
\begin{equation*}
\begin{split}
   \frac{r_\beta}{2\pi}\log\frac{1}{\beta} \int\limits_{B_{2d\beta^\gamma}\big((r_\beta,z_\beta)\big)}\zeta_{_{\beta,\Lambda}} d\nu&+\frac{d}{2\pi} \sinh^{-1}(\frac{1}{\beta^\gamma})-\frac{W(r_\beta)^2}{2}\log{\frac{1}{\beta}} \\
     & \ge (\frac{a}{2\pi}-{Wa^2})\log{\frac{1}{\beta}}+\frac{W}{2}\log{\frac{1}{\beta}}\int_D\zeta_{_{\beta,\Lambda}} r^2d\nu-C.
\end{split}
\end{equation*}
Divide both sides of the above inequality by $\log\frac{1}{\beta}$, we obtain
\begin{equation}\label{214}
\begin{split}
   \Gamma_1(r_\beta)\ge \frac{r_\beta}{2\pi}\int\limits_{B_{2d\beta^\gamma}\big((r_\beta,z_\beta)\big)}\zeta_{_{\beta,\Lambda}} d\nu-W(r_\beta)^2 \ge &~ \Gamma_1(a)+\frac{W}{2}\big{\{}\int_D\zeta_{_{\beta,\Lambda}} r^2d\nu-(r_\beta)^2\big{\}}   \\
     &-\frac{d}{2\pi} \sinh^{-1}(\frac{1}{\beta^\gamma})/\log\frac{1}{\beta}-C/\log\frac{1}{\beta}.
\end{split}
\end{equation}
Notice that
\begin{equation*}
  \int_D\zeta_{_{\beta,\Lambda}} r^2d\nu \ge (A_\beta)^2.
\end{equation*}
Taking $(r_\beta,z_\beta)=(A_\beta,0)$ and letting $\beta$ tend to $0^+$, we deduce from $\eqref{214}$ that
\begin{equation}\label{215}
  \liminf_{\beta\to 0^+}\Gamma_1(A_\beta)\ge \Gamma_1(a)-\gamma d/(2\pi).
\end{equation}
Hence we get the desired result by letting $a\to r_*$ and $\gamma \to 0$.
\end{proof}

\begin{lemma} \label{le7}
As $\beta\to 0^+$, $\int_D\zeta_{_{\beta,\Lambda}} r^2d\nu\to r_*^2$.
\end{lemma}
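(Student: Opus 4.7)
The plan is to extract the convergence from the key inequality (214) established in the proof of Lemma \ref{le6}, together with the already proven limit $A_\beta \to r_*$. Since $\int_D \zeta_{_{\beta,\Lambda}} \, d\nu = 1$ for all $\beta$ small (Lemma \ref{ad1}) and the support lies in the bounded half-plane with $r \le d$, the quantity $\int_D \zeta_{_{\beta,\Lambda}} r^2 \, d\nu$ is uniformly bounded, so it suffices to pin down both $\liminf$ and $\limsup$ at $r_*^2$.

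For the lower bound, I would observe that by the definition of $A_\beta$, the support of $\zeta_{_{\beta,\Lambda}}$ is contained in $\{(r,z)\in D : r\ge A_\beta\}$. Combined with $\int_D \zeta_{_{\beta,\Lambda}} \, d\nu = 1$, this yields
\begin{equation*}
\int_D \zeta_{_{\beta,\Lambda}} r^2 \, d\nu \;\ge\; A_\beta^2 \int_D \zeta_{_{\beta,\Lambda}} \, d\nu \;=\; A_\beta^2,
\end{equation*}
and Lemma \ref{le6} immediately gives $\liminf_{\beta\to 0^+} \int_D \zeta_{_{\beta,\Lambda}} r^2 \, d\nu \ge r_*^2$.

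For the upper bound, I would go back to inequality (214) applied at the point $(r_\beta,z_\beta)=(A_\beta,0)$, which belongs to $\mathrm{supp}(\zeta_{_{\beta,\Lambda}})$ by Steiner symmetry. Rearranging it gives
\begin{equation*}
\int_D \zeta_{_{\beta,\Lambda}} r^2 \, d\nu \;\le\; A_\beta^2 \,+\, \frac{2}{W}\bigl(\Gamma_1(A_\beta)-\Gamma_1(a)\bigr) \,+\, \frac{d\,\sinh^{-1}(\beta^{-\gamma})}{\pi W \log(1/\beta)} \,+\, \frac{C}{W\log(1/\beta)}
\end{equation*}
for any $a\in(0,d)$ and any $\gamma\in(0,1)$. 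Letting $\beta\to 0^+$ with $\gamma$ fixed, the last two terms tend to $0$ (the third contributes $\gamma d/(\pi W)$ in fact, which will be killed afterwards), $A_\beta^2 \to r_*^2$ by Lemma \ref{le6}, and $\Gamma_1(A_\beta)\to \Gamma_1(r_*)$ by continuity of $\Gamma_1$. This produces
\begin{equation*}
\limsup_{\beta\to 0^+} \int_D \zeta_{_{\beta,\Lambda}} r^2 \, d\nu \;\le\; r_*^2 + \frac{2}{W}\bigl(\Gamma_1(r_*)-\Gamma_1(a)\bigr) + \frac{\gamma d}{\pi W}.
\end{equation*}
Finally, sending $\gamma\to 0^+$ and $a\to r_*$ (from within $(0,d)$, which is valid since $\Gamma_1$ is continuous on $[0,d]$ and attains its maximum at $r_*\in [r_*,d]$ or at the interior critical point), the right-hand side collapses to $r_*^2$, closing the argument.

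The only mildly delicate point is ensuring that the limit $a\to r_*$ is admissible when $r_* = d$ (the boundary case when $W\le 1/(4\pi d)$, though Theorem \ref{thm1} actually assumes $W>1/(4\pi d)$ so $r_*<d$); continuity of $\Gamma_1$ handles this cleanly via a standard approximation. No genuine new estimates are needed — the proof is essentially a bookkeeping exercise that turns the asymptotic inequality (214) into the quantitative squeeze $A_\beta^2 \le \int r^2\zeta_{_{\beta,\Lambda}}\,d\nu \le A_\beta^2 + o(1)$ on the second moment.
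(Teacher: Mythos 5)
Your proposal is correct and follows essentially the same route as the paper: the paper's own proof also reads the squeeze $0\le \int_D\zeta_{_{\beta,\Lambda}}r^2\,d\nu-(A_\beta)^2\le d\gamma/(\pi W)+o(1)$ directly off inequality (214) evaluated at $(A_\beta,0)$, then uses $A_\beta\to r_*$ from Lemma \ref{le6} and the arbitrariness of $\gamma$ (and $a\to r_*$). Your write-up merely makes explicit the bookkeeping (the lower bound via $\int\zeta\,d\nu=1$, the $\sinh^{-1}$ term contributing $\gamma d/(\pi W)$, and the order of limits) that the paper leaves implicit.
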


\begin{proof}
From $\eqref{214}$, we know that for any $\gamma\in(0,1)$,
\begin{equation*}
  0\le \liminf_{\beta\to 0^+}\big[\int_D\zeta_{_{\beta,\Lambda}} r^2d\nu-(A_\beta)^2\big]\le \limsup_{\beta\to 0^+}\big[\int_D\zeta_{_{\beta,\Lambda}} r^2d\nu-(A_\beta)^2\big]\le \frac{d\gamma}{\pi W}.
\end{equation*}
Thus
\begin{equation*}
  \lim_{\beta\to 0^+}\int_D\zeta_{_{\beta,\Lambda}} r^2d\nu =r_*^2.
\end{equation*}
\end{proof}

From Lemma $\ref{le6}$, we immediately get the following result.
\begin{lemma}\label{le7}
For any $\eta>0$, there holds
\begin{equation*}
  \lim_{\beta\to 0^+}\int_{D\cap\{r\ge r_*+\eta\}}\zeta_{_{\beta,\Lambda}} d\nu=0
\end{equation*}
\end{lemma}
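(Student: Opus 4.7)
The plan is to derive this concentration bound directly from three facts already in hand: the normalization $\int_D \zeta_{_{\beta,\Lambda}}\, d\nu = 1$ from Lemma \ref{le3}, the convergence $A_\beta \to r_*$ from Lemma \ref{le6}, and the second-moment convergence $\int_D \zeta_{_{\beta,\Lambda}} r^2\, d\nu \to r_*^2$ just established in the preceding lemma. The essential geometric input is that $\zeta_{_{\beta,\Lambda}}$ equals its Steiner symmetrization with respect to $\{z=0\}$, so the $r$-projection of $\operatorname{supp}(\zeta_{_{\beta,\Lambda}})$ coincides with $\{r : (r,0) \in \operatorname{supp}(\zeta_{_{\beta,\Lambda}})\}$; in particular $r \ge A_\beta$ holds $\nu$-a.e.\ on the support. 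The case $r_* = d$ is vacuous since $\{r \ge r_* + \eta\} \cap D = \emptyset$, so we may assume $r_* < d$.

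Given these ingredients, I would argue by contradiction. Set $\delta_\beta := \int_{D \cap \{r \ge r_* + \eta\}} \zeta_{_{\beta,\Lambda}}\, d\nu$ and suppose that $\limsup_{\beta \to 0^+} \delta_\beta = \delta > 0$. Splitting the second moment over $\{r < r_* + \eta\}$ and $\{r \ge r_* + \eta\}$ and using the respective pointwise lower bounds $r \ge A_\beta$ and $r \ge r_* + \eta$ yields
\begin{equation*}
\int_D \zeta_{_{\beta,\Lambda}} r^2\, d\nu \ge A_\beta^2 (1 - \delta_\beta) + (r_* + \eta)^2 \delta_\beta.
\end{equation*}
Passing to a subsequence along which $\delta_{\beta_j} \to \delta$ and letting $\beta_j \to 0^+$, the left-hand side tends to $r_*^2$ while the right-hand side tends to $r_*^2 + \delta(2 r_* \eta + \eta^2)$, which is strictly larger since $r_* > 0$, $\eta > 0$ and $\delta > 0$. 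This contradiction forces $\delta_\beta \to 0$, which is precisely the claimed limit.

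The argument has no real obstacle of its own: it is a one-step calibration of the first and second radial moments against each other. The only subtle point worth verifying carefully is that Steiner symmetry in $z$ actually pins $r \ge A_\beta$ on the full planar support of $\zeta_{_{\beta,\Lambda}}$ rather than merely along the axis $\{z=0\}$; without this the lower bound on the first piece of the split would collapse and the whole comparison would break.
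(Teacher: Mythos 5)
Your proof is correct and is essentially the argument the paper intends: the paper states the lemma as an immediate consequence of the preceding results, and the natural route is exactly your calibration of the second moment ($\to r_*^2$) against the unit mass and the lower support bound $r\ge A_\beta\to r_*$ supplied by Steiner symmetry. Your handling of the only delicate point (that $z$-Steiner symmetry pins the full planar support in $\{r\ge A_\beta\}$, not just its trace on $\{z=0\}$) is also right.
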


\begin{lemma}\label{le8}
$\lim_{\beta\to 0^+}B_\beta=r_*$. Consequently, $\lim_{\beta\to 0^+}dist_{C_{r_{_*}}}\big({supp}(\zeta_{_{\beta,\Lambda}})\big)=0$.
\end{lemma}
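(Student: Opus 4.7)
The proof of $\lim_{\beta\to 0^+}B_\beta=r_*$ mirrors Lemma~\ref{le6}. Since $\zeta_{\beta,\Lambda}$ is Steiner--symmetric in $z$ and compactly supported in $D$ by Lemma~\ref{le3}, the supremum $B_\beta$ is attained at a point $(B_\beta,0)\in supp(\zeta_{\beta,\Lambda})$. I would argue by contradiction: suppose that along some subsequence $\beta_n\to 0^+$ one has $B_{\beta_n}\to b^*>r_*$. Apply the pointwise estimate \eqref{214} at $(r_\beta,z_\beta)=(B_\beta,0)$. The key observation is that for all sufficiently small $\beta$ the ball $B_{2d\beta^\gamma}((B_\beta,0))$ lies inside a halfplane $\{r>r_*+\eta\}$ with $\eta>0$ fixed (say $\eta=(b^*-r_*)/4$), so by Lemma~\ref{le7} its $\nu$-mass vanishes as $\beta\to 0$. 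Hence the left-hand side of \eqref{214} tends to $-W(b^*)^2$, while sending $\beta\to 0^+$, then $a\to r_*$, then $\gamma\to 0^+$ on the right-hand side (using $\int_D\zeta_{\beta,\Lambda}r^2 d\nu\to r_*^2$) produces
\begin{equation*}
   -W(b^*)^2 \ \ge\ \Gamma_1(r_*)+\tfrac{W}{2}\bigl(r_*^2-(b^*)^2\bigr),
\end{equation*}
which rearranges to $-\tfrac{W(b^*)^2}{2}\ge \tfrac{r_*}{2\pi}-\tfrac{W r_*^2}{2}=\tfrac{3}{32\pi^2 W}>0$ (using $r_*=1/(4\pi W)$). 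Since the left-hand side is non-positive, this is a contradiction, so $B_\beta\to r_*$.

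For the consequence $dist_{\mathcal{C}_{r_*}}(supp(\zeta_{\beta,\Lambda}))\to 0$, by Steiner symmetry the $r$-projection of $supp(\zeta_{\beta,\Lambda})$ equals $[A_\beta,B_\beta]$, which shrinks to $\{r_*\}$. It therefore suffices to control the vertical extent $Z_\beta:=\sup\{|z|:(r,z)\in supp(\zeta_{\beta,\Lambda})\}$. Choose $(r_\beta,Z_\beta)\in supp(\zeta_{\beta,\Lambda})$ realizing this supremum; then $r_\beta\in[A_\beta,B_\beta]\to r_*$. Applying \eqref{214} at $(r_\beta,Z_\beta)$ and passing to the limit (along a subsequence, with $a\to r_*$ and $\gamma\to 0^+$) gives
\begin{equation*}
   \tfrac{r_*}{2\pi}\,m^* - W r_*^2 \ \ge\ \Gamma_1(r_*)\ =\ \tfrac{r_*}{2\pi}-W r_*^2,
\end{equation*}
where $m^*:=\lim\int_{B_{2d\beta^\gamma}((r_\beta,Z_\beta))}\zeta_{\beta,\Lambda}\,d\nu$. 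This forces $m^*\ge 1$, and since $\int_D\zeta_{\beta,\Lambda}\,d\nu=1$ we must have $m^*=1$. By Steiner symmetry of $\zeta_{\beta,\Lambda}$ in $z$, the reflected ball $B_{2d\beta^\gamma}((r_\beta,-Z_\beta))$ carries the same mass, which also tends to $1$. If $Z_\beta\to Z^*>0$ along any subsequence, then for all sufficiently small $\beta$ these two balls are disjoint, producing total mass $\to 2>1$, in conflict with the normalization. Therefore $Z_\beta\to 0$, and combined with $A_\beta,B_\beta\to r_*$ this yields
\begin{equation*}
   \sup_{(r,z)\in supp(\zeta_{\beta,\Lambda})}\sqrt{(r-r_*)^2+z^2}\ \to\ 0,
\end{equation*}
which is exactly $dist_{\mathcal{C}_{r_*}}(supp(\zeta_{\beta,\Lambda}))\to 0$.

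The main obstacle is the second half: controlling $Z_\beta$ does not follow from any of the earlier lemmas, which only track the $r$-extent or $r$-moments of the mass. The decisive device is Steiner symmetry in $z$, which converts the forced unit-mass concentration at the hypothetical off-axis limit $(r_*,Z^*)$ into a doubling contradiction with the total mass constraint $\int\zeta_{\beta,\Lambda}\,d\nu=1$.
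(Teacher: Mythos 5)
Your proof of the first claim is essentially the paper's argument in different packaging: the paper also applies \eqref{214} at $(B_\beta,0)$, but instead of letting the ball mass vanish and reaching a sign contradiction, it first derives the lower bound $\liminf_{\beta\to0^+}\int_{B_{2d\beta^\gamma}((B_\beta,0))}\zeta_{\beta,\Lambda}\,d\nu\ge r_*/d-\gamma>0$ and then contradicts Lemma \ref{le7} (vanishing of the mass in $\{r\ge r_*+\eta\}$) if $\limsup B_\beta>r_*$; both variants use exactly the same ingredients, namely \eqref{214}, $\int_D\zeta_{\beta,\Lambda}r^2\,d\nu\to r_*^2$, and $A_\beta\le B_\beta$. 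Where you genuinely diverge is the ``consequently'' part: the paper dismisses it with ``clearly follows'', and the actual control of the vertical extent there only comes from the next lemma (Lemma \ref{diam1}), where the bound $\liminf_\beta\int_{B_{2d\beta^\gamma}((r_\beta,z_\beta))}\zeta_{\beta,\Lambda}\,d\nu\ge 1-d\gamma/r_*$ at an arbitrary support point, together with $\int_D\zeta_{\beta,\Lambda}\,d\nu=1$, forces any two support points to lie within $4d\beta^\gamma$ of each other, so the support collapses onto $(r_*,0)$ because $(A_\beta,0)$ belongs to it. Your reflection trick (the ball around $(r_\beta,Z_\beta)$ and its mirror around $(r_\beta,-Z_\beta)$ carry equal mass by the $z$-symmetry $\zeta_{\beta,\Lambda}=(\zeta_{\beta,\Lambda})^*$, so $Z_\beta\not\to0$ would double the mass) is a correct, self-contained way to reach the same conclusion; it is in effect the special case of the paper's disjoint-balls argument in which the second ball comes from symmetry rather than from a second support point, and it has the advantage of making Lemma \ref{le8} independent of Lemma \ref{diam1}.

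Two small repairs are needed. First, your final numerical contradiction uses $r_*=1/(4\pi W)$, i.e.\ $W>1/(4\pi d)$; in the complementary case $r_*=d$ there is nothing to prove, since $A_\beta\le B_\beta\le d=r_*$ (and in any case $\frac{r_*}{2\pi}-\frac{Wr_*^2}{2}>0$ also when $r_*=d$ and $W\le 1/(4\pi d)$), so you should note this case separately, as the paper does. Second, your $m^*$ depends on $\gamma$ through the radius $2d\beta^\gamma$, so you cannot send $\gamma\to0^+$ inside it and conclude $m^*=1$; the statement that actually follows from \eqref{214} at fixed $\gamma$ is $\liminf_\beta\int_{B_{2d\beta^\gamma}((r_\beta,Z_\beta))}\zeta_{\beta,\Lambda}\,d\nu\ge 1-d\gamma/r_*$, which exceeds $1/2$ for small $\gamma$ and is all that your doubling argument requires.
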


\begin{proof}
We may assume $0<r_*<d$, otherwise we have done by Lemma $\ref{le5}$. From $\eqref{214}$, we obtain
\begin{equation*}
\begin{split}
   \frac{d}{2\pi}\liminf_{\beta\to 0^+}\int\limits_{B_{2d\beta^\gamma}\big((B_\beta,0)\big)}\zeta_{_{\beta,\Lambda}} d\nu  & \ge \Gamma_1(r_*)+\frac{W}{2}\liminf_{\beta\to 0^+}(B_\beta)^2+\frac{Wr_*^2}{2}-\frac{d\gamma}{2\pi} \\
     & \ge \frac{r_*}{2\pi}-\frac{d\gamma}{2\pi}.
\end{split}
\end{equation*}
Hence
\begin{equation*}
  \liminf_{\beta\to 0^+}\int\limits_{B_{2d\beta^\gamma}\big((B_\beta,0)\big)}\zeta_{_{\beta,\Lambda}} d\nu\ge\frac{r_*}{d}-\gamma.
\end{equation*}
The desired result clearly follows from Lemma $\ref{le7}$ if we take $\gamma$ so small such that $r_*/d-\gamma>0$.
\end{proof}

\begin{lemma}\label{diam1}
For any number $\gamma\in(0, 1)$, there holds $diam\big({supp}(\zeta_{_{\beta,\Lambda}})\big) \le 4d \beta^{\gamma}$ provided $\beta$ is small enough.
\end{lemma}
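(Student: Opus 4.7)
The plan is to extract the diameter bound from the concentration estimate that is essentially already present in the proofs of Lemmas \ref{le6}--\ref{le8}, and then argue by contradiction using the unit-mass constraint $\int_D \zeta_{_{\beta,\Lambda}}d\nu = 1$.

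First, I would fix $\gamma \in (0,1)$ and take an \emph{arbitrary} point $(r_\beta,z_\beta)\in\operatorname{supp}(\zeta_{_{\beta,\Lambda}})$. The middle inequality in \eqref{214}, applied with $a = r_*$ and rearranged by moving the $Wr_\beta^2$ term, gives
\begin{equation*}
  \frac{r_\beta}{2\pi}\int_{B_{2d\beta^\gamma}((r_\beta,z_\beta))}\zeta_{_{\beta,\Lambda}}\,d\nu
  \;\ge\; \Gamma_1(r_*) + \frac{W r_\beta^2}{2} + \frac{W}{2}\int_D \zeta_{_{\beta,\Lambda}} r^2\,d\nu - o(1),
\end{equation*}
where $o(1)\to 0$ as $\beta\to 0^+$ uniformly in $(r_\beta,z_\beta)$, since the error term $\frac{d}{2\pi}\sinh^{-1}(\beta^{-\gamma})/\log(1/\beta) + C/\log(1/\beta)$ depends only on $\beta$ and $\gamma$.

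Second, I would pass to the limit. By Lemmas \ref{le6} and \ref{le8} we have $A_\beta, B_\beta \to r_*$, so $r_\beta \to r_*$ uniformly over all points in $\operatorname{supp}(\zeta_{_{\beta,\Lambda}})$. Combined with $\int_D \zeta_{_{\beta,\Lambda}} r^2\,d\nu \to r_*^2$ (Lemma \ref{le7}), the right-hand side above becomes $\Gamma_1(r_*) + W r_*^2 - o(1) = r_*/(2\pi) - o(1)$. Dividing by $r_\beta/(2\pi)$ and using $r_\beta \to r_*$ yields the key \emph{uniform} concentration bound
\begin{equation*}
  \int_{B_{2d\beta^\gamma}((r_\beta,z_\beta))}\zeta_{_{\beta,\Lambda}}\,d\nu \;\ge\; 1 - o(1) \qquad \text{as } \beta\to 0^+,
\end{equation*}
valid uniformly for every $(r_\beta,z_\beta)\in\operatorname{supp}(\zeta_{_{\beta,\Lambda}})$.

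Third, I would conclude by contradiction. Suppose $\operatorname{diam}\bigl(\operatorname{supp}(\zeta_{_{\beta,\Lambda}})\bigr) > 4d\beta^\gamma$ for a sequence $\beta \to 0^+$. Then for each such $\beta$ there exist $P_1, P_2 \in \operatorname{supp}(\zeta_{_{\beta,\Lambda}})$ with $|P_1 - P_2| > 4d\beta^\gamma$, so the balls $B_{2d\beta^\gamma}(P_1)$ and $B_{2d\beta^\gamma}(P_2)$ are disjoint. Applying the uniform concentration bound at both centers,
\begin{equation*}
  1 = \int_D \zeta_{_{\beta,\Lambda}}\,d\nu \;\ge\; \int_{B_{2d\beta^\gamma}(P_1)}\zeta_{_{\beta,\Lambda}}\,d\nu + \int_{B_{2d\beta^\gamma}(P_2)}\zeta_{_{\beta,\Lambda}}\,d\nu \;\ge\; 2 - o(1),
\end{equation*}
which is a contradiction once $\beta$ is small enough (here I use $\int_D\zeta_{_{\beta,\Lambda}}d\nu = 1$ from Lemma \ref{le3}, valid since Lemmas \ref{le4} and \ref{ad1} give $E_\beta(\zeta_{_{\beta,\Lambda}}) > 0$ and $\mu_{_{\beta,\Lambda}} > 0$ for small $\beta$).

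The main subtle point is the uniformity of the $o(1)$ error across all choices of $(r_\beta,z_\beta)$ in the support; this is precisely what allows the two-ball argument to close. All the other ingredients are direct consequences of the prior lemmas, so no new analytic estimate is needed.
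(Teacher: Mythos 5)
There is a genuine gap in your second step: you treat the quantity $\frac{d}{2\pi}\sinh^{-1}(\beta^{-\gamma})/\log\frac{1}{\beta}+C/\log\frac{1}{\beta}$ as $o(1)$, but $\sinh^{-1}(\beta^{-\gamma})=\log\big(\beta^{-\gamma}+\sqrt{\beta^{-2\gamma}+1}\big)\sim\gamma\log\frac{1}{\beta}$, so this error converges to $\frac{d\gamma}{2\pi}>0$ and is not negligible. Consequently, what \eqref{214} actually yields after letting $\beta\to 0^+$, $a\to r_*$, and using $r_\beta\to r_*$ and $\int_D\zeta_{_{\beta,\Lambda}}r^2\,d\nu\to r_*^2$, is
\begin{equation*}
\liminf_{\beta\to 0^+}\int_{B_{2d\beta^\gamma}((r_\beta,z_\beta))}\zeta_{_{\beta,\Lambda}}\,d\nu\;\ge\;1-\frac{d\gamma}{r_*},
\end{equation*}
which is exactly \eqref{216} in the paper, not the uniform bound $1-o(1)$ you claim. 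Since $r_*\le d$, the right-hand side exceeds $1/2$ only when $\gamma<r_*/(2d)\le 1/2$, and it is not even positive when $\gamma\ge r_*/d$; so your disjoint-balls contradiction closes only for small $\gamma$, whereas the lemma asserts the bound for every $\gamma\in(0,1)$.

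This loss of the factor $d/r_*$ is precisely why the paper's proof is a two-step bootstrap rather than a one-pass argument. It first runs essentially your argument for a small $\gamma$ (reducing, as you do, to showing mass $>1/2$ in each ball, which is equivalent to your two-ball contradiction), concluding $\mathrm{diam}\big(\mathrm{supp}(\zeta_{_{\beta,\Lambda}})\big)\le C/\log\frac{1}{\beta}$. Then, using this smallness of the support, it improves the crude estimate $I_1\le\frac{d}{2\pi}\sinh^{-1}(\beta^{-\gamma})$ to $I_1\le\frac{r_\beta}{2\pi}\sinh^{-1}(\beta^{-\gamma})+C$ — the factor $d$, which is the source of the defect $d\gamma/r_*$, gets replaced by $r_\beta\approx r_*$ — and re-runs the computation to obtain a concentration bound larger than $1/2$ for every $\gamma\in(0,1)$. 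Your other ingredients (uniformity over the support via Lemma \ref{le8}, $\int_D\zeta_{_{\beta,\Lambda}}\,d\nu=1$ from Lemmas \ref{le3}, \ref{le4} and \ref{ad1}, and the disjointness argument) are fine; what is missing is this bootstrap, without which the claimed uniform bound $1-o(1)$ is unjustified.
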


\begin{proof}
  Let us use the same notation as in the proof of Lemma $\ref{le6}$. Recalling that $\int_{D}\zeta_{_{\beta,\Lambda}} d\nu=1, $ so it suffices to prove that
  \begin{equation*}
  \int\limits_{B_{2d\beta^\gamma}((r_\beta,z_\beta))}\zeta_{_{\beta,\Lambda}} d\nu>1/2, \ \ \forall\, (r_\beta,z_\beta)\in {supp}(\zeta_{_{\beta,\Lambda}}).
  \end{equation*}
  Firstly, from Lemma $\ref{le8}$ we know that $r_\beta\to r_*$  as $\beta \to 0^+$.
  By $\eqref{214}$ we get
\begin{equation}\label{216}
  \liminf_{\beta\to 0^+}\int\limits_{B_{2d\beta^\gamma}\big((r_\beta,z_\beta)\big)}\zeta_{_{\beta,\Lambda}} d\nu \ge 1-\frac{d\gamma}{r_*},
\end{equation}
which implies the desired result for all small $\gamma$ such that $1-\frac{d\gamma}{r_*}>1/2$. It follows that $diam\big({supp}(\zeta_{\beta,\Lambda})\big)\le C/\log\frac{1}{\beta}$ provided $\beta$ is small enough. With this in hand, we can improve $\eqref{212}$ as follows
\begin{equation*}
\begin{split}
   I_1 &=\int\limits_{D\cap\{\sigma>\beta^\gamma\}}G(r_\beta,z_\beta,r',z')\zeta_{_{\beta,\Lambda}}(r',z') r'dr'dz'\\
       &\le \frac{(r_\beta)^{\frac{1}{2}}}{2\pi} \sinh^{-1}(\frac{1}{\beta^\gamma})\int\limits_{D\cap\{\sigma>\beta^\gamma\}}\zeta_{_{\beta,\Lambda}}(r',z')r'^{\frac{3}{2}}dr'dz'\\
       &\le \frac{r_\beta}{2\pi} \sinh^{-1}(\frac{1}{\beta^\gamma})+C.
\end{split}
\end{equation*}
We can now repeat the proof and sharpen $\eqref{216}$ as follows
\begin{equation*}
  \liminf_{\beta\to 0^+}\int\limits_{B_{2d\beta^\gamma}((r_\beta,z_\beta))}\zeta_{_{\beta,\Lambda}}d\nu \ge 1-\frac{\gamma}{2},
\end{equation*}
which implies the desired result and completes the proof.
\end{proof}

\begin{lemma}\label{le9}
There holds
\begin{equation}\label{217}
  \lim_{\beta\to0^+}\frac{\log \big[diam\big({supp}(\zeta_{\beta,\Lambda})\big)\big]}{\log{\beta}}=1.
\end{equation}
\end{lemma}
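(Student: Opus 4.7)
The statement is a two-sided estimate on the $\log$-scale, so I would prove it by bounding $\mathrm{diam}\big(\mathrm{supp}(\zeta_{\beta,\Lambda})\big)$ from above and below by powers of $\beta$, then taking logarithms and passing to the limit.

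For the lower bound on the ratio (equivalently, the upper bound on the diameter), I would simply invoke Lemma \ref{diam1}: for any $\gamma\in(0,1)$ and all sufficiently small $\beta$,
\begin{equation*}
  \mathrm{diam}\big(\mathrm{supp}(\zeta_{\beta,\Lambda})\big)\le 4d\beta^{\gamma}.
\end{equation*}
Taking $\log$ and dividing by $\log\beta<0$ (which reverses the inequality) yields
\begin{equation*}
  \frac{\log\big[\mathrm{diam}\big(\mathrm{supp}(\zeta_{\beta,\Lambda})\big)\big]}{\log\beta}\ge \gamma+\frac{\log(4d)}{\log\beta},
\end{equation*}
so letting $\beta\to 0^+$ and then $\gamma\to 1^-$ gives $\liminf_{\beta\to 0^+}\ge 1$.

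For the reverse inequality I need a matching lower bound $\mathrm{diam}\ge c\beta$ for some constant $c>0$ independent of $\beta$. The plan is to use the pointwise cap $\zeta_{\beta,\Lambda}\le \Lambda/\beta^2$ together with the normalization $\int_D\zeta_{\beta,\Lambda}\,d\nu=1$ established in Lemma \ref{le3} (valid for small $\beta$ by Lemma \ref{ad1}). Writing $S_\beta:=\mathrm{supp}(\zeta_{\beta,\Lambda})$ and noting $d\nu=r\,dr\,dz$ with $r<d$, one gets
\begin{equation*}
  1=\int_{S_\beta}\zeta_{\beta,\Lambda}\,d\nu \le \frac{\Lambda}{\beta^2}\int_{S_\beta} r\,dr\,dz \le \frac{\Lambda d}{\beta^2}\,m_2(S_\beta),
\end{equation*}
so $m_2(S_\beta)\ge \beta^2/(\Lambda d)$. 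Since $S_\beta$ is contained in a disk of radius $\mathrm{diam}(S_\beta)$, we have $m_2(S_\beta)\le \pi\,\mathrm{diam}(S_\beta)^2$, whence $\mathrm{diam}(S_\beta)\ge c\beta$ with $c=(\pi\Lambda d)^{-1/2}$. Taking logs and dividing by $\log\beta<0$ produces
\begin{equation*}
  \frac{\log\big[\mathrm{diam}\big(\mathrm{supp}(\zeta_{\beta,\Lambda})\big)\big]}{\log\beta}\le 1+\frac{\log c}{\log\beta}\to 1,
\end{equation*}
which gives $\limsup_{\beta\to 0^+}\le 1$.

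Combining the two bounds, the limit equals $1$. There is no real obstacle here: the upper bound on the diameter is already established, and the lower bound is an elementary consequence of the $L^\infty$ ceiling built into the admissible class $\mathcal{A}_{\beta,\Lambda}$. The only point to verify carefully is that the hypotheses needed to apply Lemma \ref{diam1} and the identity $\int_D\zeta_{\beta,\Lambda}\,d\nu=1$ (namely $\beta$ sufficiently small, so that Lemma \ref{ad1} forces $\mu_{\beta,\Lambda}>0$ and $E_\beta(\zeta_{\beta,\Lambda})>0$) are in force, which is automatic for small $\beta$.
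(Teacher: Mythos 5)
Your proof is correct and follows essentially the same route as the paper: the upper bound comes from Lemma \ref{diam1}, and the lower bound $\mathrm{diam}\ge c\beta$ comes from combining the cap $\zeta_{\beta,\Lambda}\le\Lambda/\beta^2$ with $\int_D\zeta_{\beta,\Lambda}\,d\nu=1$ to force $m_2(\mathrm{supp}\,\zeta_{\beta,\Lambda})\ge C\beta^2$, exactly as in the paper. Your write-up merely spells out the constants and the sign reversal when dividing by $\log\beta$, which the paper leaves implicit.
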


\begin{proof}
Since $0\le \zeta_{_{\beta,\Lambda}} \le \Lambda/\beta^2$ and $\int_D\zeta_{_{\beta,\Lambda}} d\nu=1$, we have $m_2\big(supp(\zeta_{_{\beta,\Lambda}})\big)\ge C\beta^2$. This implies that
\begin{equation}\label{218}
  diam\big(supp(\zeta_{\beta,\Lambda})\big)\ge C\beta.
\end{equation}
Combining $\eqref{218}$ with Lemma $\ref{diam1}$, it is easy to obtain $\eqref{217}$. The proof is completed.
\end{proof}

We can summarize what we have proved as the following proposition.
\begin{proposition}\label{le10}
 For any $\gamma\in(0,1)$, there holds
 \begin{equation*}
   diam\big(supp(\zeta_{\beta,\Lambda})\big) \le 4d \beta^\gamma
 \end{equation*}
 provided $\beta$ is small enough.
Moreover, one has
\begin{equation*}
\begin{split}
   \lim_{\beta\to0^+}\frac{\log \big[diam\big({supp}(\zeta_{_{\beta,\Lambda}})\big)\big]}{\log{\beta}}&=1, \\
   \lim_{\beta\to0^+}dist_{\mathcal{C}_{r_{_*}}}\big(supp(\zeta_{_{\beta,\Lambda}})\big)&=0.
\end{split}
\end{equation*}
\end{proposition}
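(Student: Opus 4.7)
The proposition is essentially a consolidation of the three preceding lemmas (\ref{le8}, \ref{diam1}, and \ref{le9}), so the plan is just to extract and collect their conclusions rather than to prove anything new.

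First I would handle the diameter estimate $\operatorname{diam}(\operatorname{supp}(\zeta_{_{\beta,\Lambda}})) \le 4d\beta^{\gamma}$ for arbitrary $\gamma \in (0,1)$. This is precisely the content of Lemma \ref{diam1}; there the strategy was to show that a full half of the mass sits inside any ball $B_{2d\beta^\gamma}((r_\beta,z_\beta))$ centered at a point in the support, which forces the support to lie in such a ball once $\beta$ is small enough. So the first bullet is recorded verbatim from Lemma \ref{diam1}.

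Next I would record the logarithmic ratio $\lim_{\beta\to 0^+}\log[\operatorname{diam}(\operatorname{supp}(\zeta_{_{\beta,\Lambda}}))]/\log\beta = 1$. This is exactly Lemma \ref{le9}, obtained by combining the upper bound above with the elementary lower bound $\operatorname{diam}(\operatorname{supp}(\zeta_{_{\beta,\Lambda}})) \ge C\beta$, which came from the pointwise constraint $0\le \zeta_{_{\beta,\Lambda}} \le \Lambda/\beta^2$ together with the normalization $\int_D \zeta_{_{\beta,\Lambda}}\, d\nu = 1$ (so that $m_2(\operatorname{supp}(\zeta_{_{\beta,\Lambda}})) \ge C\beta^2$).

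Finally, for the axisymmetric distance statement $\lim_{\beta\to 0^+}\operatorname{dist}_{\mathcal{C}_{r_*}}(\operatorname{supp}(\zeta_{_{\beta,\Lambda}})) = 0$, I would combine Lemmas \ref{le6} and \ref{le8}, which show $A_\beta \to r_*$ and $B_\beta \to r_*$, with the diameter bound to control the $z$-coordinate: for any point $(r,z) \in \operatorname{supp}(\zeta_{_{\beta,\Lambda}})$ one has $|r - r_*| \to 0$ by the $A_\beta,B_\beta$ estimates, while $|z| \le \operatorname{diam}(\operatorname{supp}(\zeta_{_{\beta,\Lambda}})) \to 0$ by Lemma \ref{diam1}. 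Since the distance from the corresponding point in $\mathbb{R}^3$ to $\mathcal{C}_{r_*}$ is bounded by $\sqrt{(r-r_*)^2 + z^2}$, the supremum over the support tends to $0$.

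There is no real obstacle here since the hard analysis has already been carried out in the preceding lemmas; the only thing to be careful about is that Lemma \ref{le8} already combines the two one-sided bounds into the axisymmetric distance statement, so one simply cites it rather than re-deriving it. The proposition is stated as a single convenient package to be invoked later when proving the main theorem.
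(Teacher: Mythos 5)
Your proposal is correct and matches the paper exactly: the paper gives no separate proof of Proposition \ref{le10}, introducing it with "We can summarize what we have proved as the following proposition," so its content is precisely the consolidation of Lemma \ref{diam1} (diameter bound), Lemma \ref{le9} (logarithmic ratio, via the lower bound $diam \ge C\beta$ from $0\le\zeta_{_{\beta,\Lambda}}\le\Lambda/\beta^2$ and unit circulation), and Lemmas \ref{le6}--\ref{le8} (convergence of the support to $\mathcal{C}_{r_*}$). Your extra remark on controlling the $z$-coordinate through the diameter bound is a harmless (indeed slightly more explicit) justification of the "consequently" already asserted in Lemma \ref{le8}.
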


The following lemma shows that  $\psi_{_{\beta,\Lambda}}$ has a prior upper bound when $W>1/(4\pi d)$.
\begin{lemma}\label{ad2}
  Suppose $W>1/(4\pi d)$, then for all sufficiently small $\beta$, we have
\begin{equation*}
  (\psi_{_{\beta,\Lambda}})_+ \le C_1 \log \Lambda +C_2,
\end{equation*}
where the positive constants $C_1$, $C_2$ are independent of $\beta$, $\Lambda$.
\end{lemma}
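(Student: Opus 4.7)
The plan is to combine a sharp pointwise upper bound for $\mathcal{K}\zeta_{_{\beta,\Lambda}}$ at a point of $\mathrm{supp}(\zeta_{_{\beta,\Lambda}})$ with the sharp lower bound on $\mu_{_{\beta,\Lambda}}$ from Lemma \ref{le5}, and observe that the leading $\log(1/\beta)$ contributions cancel up to a vanishing error because $r_*=1/(4\pi W)$ maximizes $\Gamma_1$ on $[0,d]$. The hypothesis $W>1/(4\pi d)$ enters exactly here: it guarantees $r_*<d$, which allows Lemma \ref{le5} to be applied with $a=r_*$. From the representation \eqref{207} one has $\{\psi_{_{\beta,\Lambda}}>0\}=\mathrm{supp}(\zeta_{_{\beta,\Lambda}})$ up to a null set; since $\psi_{_{\beta,\Lambda}}$ is continuous and this support is compact in $D$ by Lemma \ref{le3}, the positive maximum is attained at some $(r_\beta,z_\beta)$ in the support, so it suffices to bound $\psi_{_{\beta,\Lambda}}(r_\beta,z_\beta)$.

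First I would establish the pointwise estimate
\[
  \mathcal{K}\zeta_{_{\beta,\Lambda}}(r_\beta,z_\beta)\;\le\;\frac{r_\beta}{2\pi}\log\frac{1}{\beta}+\frac{r_\beta}{4\pi}\log\Lambda+C,
\]
with $C$ independent of $\beta,\Lambda$. Using $K\le G$ from Lemma \ref{le1} and the expansion in Lemma \ref{le2}, and exploiting that $\sigma$ is small on the support (diameter bound $O(\beta^\gamma)$ from Proposition \ref{le10}), the problem reduces to bounding $\int_D \log(1/\rho)\,\zeta_{_{\beta,\Lambda}}\,d\nu'$ with $\rho=|(r_\beta,z_\beta)-(r',z')|$, the prefactor $\sqrt{r_\beta r'}$ being $r_\beta+O(\beta^\gamma)$ on the support. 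The crucial improvement over Lemma \ref{le6} is to invoke the $L^\infty$ constraint $\zeta_{_{\beta,\Lambda}}\le\Lambda/\beta^2$: setting $\tilde\zeta(x')=\zeta_{_{\beta,\Lambda}}(x')\,r'$, one has $\int\tilde\zeta\,dx'=1$ and $\|\tilde\zeta\|_\infty\le d\Lambda/\beta^2$, so the Hardy--Littlewood rearrangement inequality gives
\[
  \int_D \log(1/\rho)\,\zeta_{_{\beta,\Lambda}}\,d\nu'\;\le\;\frac{d\Lambda}{\beta^2}\int_{B_R(0)}\log\frac{1}{|y|}\,dy\;=\;\log\frac{1}{\beta}+\tfrac{1}{2}\log\Lambda+C,
\]
where $R$ is chosen with $\pi R^2\,d\Lambda/\beta^2=1$. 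The $O(\beta^\gamma\log(1/\beta))$ and $O(\beta^\gamma\log\Lambda)$ correction terms are absorbed into $C$.

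Subtracting $\tfrac{Wr_\beta^2}{2}\log(1/\beta)$ from the previous estimate and invoking Lemma \ref{le5} with $a=r_*$ (admissible since $r_*<d$), the coefficient of $\log(1/\beta)$ in the resulting bound for $\psi_{_{\beta,\Lambda}}(r_\beta,z_\beta)$ collapses to
\[
  \bigl[\Gamma_1(r_\beta)-\Gamma_1(r_*)\bigr]+\frac{W}{2}\Bigl[r_\beta^2-\int_D \zeta_{_{\beta,\Lambda}}\,r^2\,d\nu\Bigr].
\]
The first bracket is nonpositive since $\Gamma_1$ is maximized on $[0,d]$ at $r_*$, and the second is $O(\beta^\gamma)$ by concentration of the support at $r_*$ (Proposition \ref{le10}); multiplied by $\log(1/\beta)$ the whole expression is $O(\beta^\gamma\log(1/\beta))=o(1)$. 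What survives is $\tfrac{r_\beta}{4\pi}\log\Lambda+C\le\tfrac{d}{4\pi}\log\Lambda+C_2$, which gives the stated bound.

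The main obstacle I anticipate is the precise form of the rearrangement estimate, which must yield \emph{exactly} the term $\tfrac{1}{2}\log\Lambda$: any looser bound would leave an extra factor of $\log(1/\beta)$ that the cancellation cannot absorb. If the direct single-ball rearrangement turns out not to be tight enough in practice, a refined partition into a small $L^\infty$-dominated core of radius $\sim\beta/\sqrt\Lambda$ plus an outer $L^1$-dominated annulus, treated by the same ideas as in the proof of Lemma \ref{le6}, will do the job. Beyond this, the proof is essentially bookkeeping, and it is only the strict inequality $r_*<d$ guaranteed by the hypothesis $W>1/(4\pi d)$ that makes the sharp bounds on $\mathcal{K}\zeta_{_{\beta,\Lambda}}$ and on $\mu_{_{\beta,\Lambda}}$ compatible.
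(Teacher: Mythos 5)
Your proposal is correct and follows essentially the same route as the paper's proof: a pointwise bound for $\mathcal{K}\zeta_{_{\beta,\Lambda}}$ on the support via Lemmas \ref{le1}--\ref{le2} and a rearrangement argument exploiting $\zeta_{_{\beta,\Lambda}}\le\Lambda/\beta^2$ (producing the $\tfrac12\log\Lambda$ per unit mass, i.e. $\log\frac{\sqrt\Lambda}{\beta}$), combined with the lower bound on $\mu_{_{\beta,\Lambda}}$ from Lemmas \ref{le4}--\ref{le5} applied with $a$ at (or near) $r_*<d$, which is exactly where $W>1/(4\pi d)$ enters, so that the $\log\frac1\beta$ terms cancel by maximality of $\Gamma_1$ at $r_*$. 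The only difference is cosmetic: the paper cancels using $\Gamma_1(A_\beta)\le\Gamma_1(r_*)$ and $\int_D\zeta_{_{\beta,\Lambda}}r^2\,d\nu\ge A_\beta^2$, while you compare at a general support point and absorb the discrepancy as $O(\beta^\gamma\log\frac1\beta)$ via the diameter bound of Proposition \ref{le10}.
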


\begin{proof}
  For any $(r,z)\in supp\,(\zeta_{_{\beta,\Lambda}})$, by Lemmas \ref{le1}, \ref{le2} and Proposition \ref{le10}, we have
\begin{equation*}
  \begin{split}
      \psi_{\beta,\Lambda}(r,z)& =\mathcal{K}\zeta_{_{\beta,\Lambda}}(r,z)-\frac{Wr^2}{2}\log{\frac{1}{\beta}}-\mu_{_{\beta,\Lambda}} \\
       & \le \int_D G(r,z,r',z')\zeta_{_{\beta,\Lambda}}d\nu-\frac{Wr^2}{2}\log{\frac{1}{\beta}}-\mu_{_{\beta,\Lambda}} \\
       & \le \frac{A_\beta}{2\pi}\int_D\log{\big[(r-r')^2+(z-z')^2 \big]^{-\frac{1}{2}}}\zeta_{_{\beta,\Lambda}}(r', z')r'dr'dz'-\frac{W(A_\beta)^2}{2}\log{\frac{1}{\beta}}-\mu_{_{\beta,\Lambda}}+C_3 \\
       & \le \frac{(A_\beta)^2}{2\pi}\int_D\log{\big[(r-r')^2+(z-z')^2 \big]^{-\frac{1}{2}}}\zeta_{_{\beta,\Lambda}}(r', z')dr'dz'-\frac{W(A_\beta)^2}{2}\log{\frac{1}{\beta}}-\mu_{_{\beta,\Lambda}}+C_4 \\
       & \le \frac{(A_\beta)^2}{2\pi}\log\frac{\sqrt{\Lambda}}{\beta}\int_D\zeta_{_{\beta,\Lambda}}(r', z')dr'dz'-\frac{W(A_\beta)^2}{2}\log{\frac{1}{\beta}}-\mu_{_{\beta,\Lambda}}+C_5 \\
       & \le \Big(\frac{A_\beta}{2\pi}-\frac{W(A_\beta)^2}{2}\Big)\log{\frac{1}{\beta}}-\mu_{_{\beta,\Lambda}}+C_6\log \Lambda+C_6, \\
  \end{split}
\end{equation*}
where the positive number $C_6$ does not depend on $\beta$ and $\Lambda$. On the other hand, since $\lim_{\beta\to 0^+}A_\beta= r_*\in(0,d)$, one can deduce from Lemmas \ref{le4} and \ref{le5} that
\begin{equation*}
  \mu_{_{\beta,\Lambda}}\ge \Big(\frac{A_\beta}{2\pi}-\frac{W(A_\beta)^2}{2}\Big)\log{\frac{1}{\beta}}-C_7,
\end{equation*}
where the positive number $C_7$ is independent of $\beta$ and $\Lambda$.
Therefore we obtain
\begin{equation*}
  (\psi_{_{\beta,\Lambda}})_+ \le C_6\log \Lambda+C_8,
\end{equation*}
which clearly implies the desired result, and the proof is thus completed.
\end{proof}

Using Lemma \ref{ad2}, we can further deduce the following result.
\begin{proposition}\label{ad3}
   Suppose $W>1/(4\pi d)$, then there exists $\Lambda_0>1$ such that
  \begin{equation*}
    \zeta_{_{\beta, \Lambda_0}}=\frac{1}{r^2\beta^2}{(\psi_{_{\beta, \Lambda_0}})}_{+}+\frac{\alpha}{\beta}\chi_{_{\{\psi_{_{\beta, \Lambda_0}}>0\}}}
  \end{equation*}
 provided $\beta$ is sufficiently small.
\end{proposition}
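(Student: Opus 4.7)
\medskip
\noindent\textbf{Proof plan for Proposition \ref{ad3}.}
The plan is to exploit the logarithmic–versus–linear gap between the uniform bound on $(\psi_{_{\beta,\Lambda}})_+$ provided by Lemma \ref{ad2} and the threshold $(\Lambda-\alpha\beta)r^2$ appearing in \eqref{207}. Concretely, I want to show that for a single choice $\Lambda_0$ of $\Lambda$, large enough but fixed independently of $\beta$, the ``saturation set''
\begin{equation*}
S_{\beta,\Lambda_0}:=\{\psi_{_{\beta,\Lambda_0}}\ge (\Lambda_0-\alpha\beta)r^2\}
\end{equation*}
is empty for all sufficiently small $\beta$. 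Once $S_{\beta,\Lambda_0}=\emptyset$, the second indicator in \eqref{207} vanishes identically, and since then $\{0<\psi_{_{\beta,\Lambda_0}}<(\Lambda_0-\alpha\beta)r^2\}=\{\psi_{_{\beta,\Lambda_0}}>0\}$, expression \eqref{207} collapses to
\begin{equation*}
\zeta_{_{\beta,\Lambda_0}}=\frac{\psi_{_{\beta,\Lambda_0}}}{r^2\beta^2}\chi_{_{\{\psi_{_{\beta,\Lambda_0}}>0\}}}+\frac{\alpha}{\beta}\chi_{_{\{\psi_{_{\beta,\Lambda_0}}>0\}}}
=\frac{1}{r^2\beta^2}(\psi_{_{\beta,\Lambda_0}})_+ +\frac{\alpha}{\beta}\chi_{_{\{\psi_{_{\beta,\Lambda_0}}>0\}}},
\end{equation*}
which is exactly the desired formula.

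To realize this, I would first localize the support. Since $S_{\beta,\Lambda_0}\subseteq\{\psi_{_{\beta,\Lambda_0}}>0\}\subseteq \mathrm{supp}(\zeta_{_{\beta,\Lambda_0}})$, and by Proposition \ref{le10} together with Lemma \ref{le8} the support of $\zeta_{_{\beta,\Lambda_0}}$ concentrates at the circle $\mathcal{C}_{r_*}$ (with $r_*=1/(4\pi W)\in(0,d)$ since $W>1/(4\pi d)$), there exists $\beta_1=\beta_1(\Lambda_0)>0$ such that every point in $S_{\beta,\Lambda_0}$ satisfies $r\ge r_*/2$ for all $\beta\in(0,\beta_1)$. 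Hence on $S_{\beta,\Lambda_0}$ we have the lower bound
\begin{equation*}
\psi_{_{\beta,\Lambda_0}}\ge (\Lambda_0-\alpha\beta)(r_*/2)^2.
\end{equation*}
On the other hand, Lemma \ref{ad2} supplies the universal upper bound $(\psi_{_{\beta,\Lambda}})_+\le C_1\log\Lambda+C_2$, where, crucially, $C_1,C_2$ are independent of both $\beta$ and $\Lambda$.

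Combining the two inequalities, a nonempty $S_{\beta,\Lambda_0}$ would force
\begin{equation*}
C_1\log\Lambda_0+C_2\ \ge\ (\Lambda_0-\alpha\beta)\,(r_*/2)^2.
\end{equation*}
Now I fix $\Lambda_0>1$ so large that $C_1\log\Lambda_0+C_2 < \tfrac12\Lambda_0(r_*/2)^2$; this is possible because the right side grows linearly in $\Lambda_0$ while the left side grows only logarithmically. Then choosing $\beta_2>0$ with $\alpha\beta_2\le \Lambda_0/2$ and setting $\beta_0:=\min(\beta_1,\beta_2)$, the displayed inequality fails for every $\beta\in(0,\beta_0)$, contradicting the assumption that $S_{\beta,\Lambda_0}$ is nonempty. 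Thus $S_{\beta,\Lambda_0}=\emptyset$ for all small $\beta$, and the reduction in the first paragraph gives the conclusion.

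The substantive content has already been extracted in Lemma \ref{ad2} and Proposition \ref{le10}; the only delicate point to verify at this stage is that the constants $C_1,C_2$ in Lemma \ref{ad2} do not degrade with $\Lambda$, so that $\Lambda_0$ can be selected before choosing the threshold $\beta_0$. Once this is in hand, the remainder is a clean logarithm-versus-linear comparison. The main conceptual obstacle is therefore not in this proposition itself but in the uniform bound of Lemma \ref{ad2}; everything here is an immediate consequence.
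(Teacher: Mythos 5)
Your argument is correct and is essentially the paper's own proof: both compare the linear-in-$\Lambda$ lower bound for $(\psi_{_{\beta,\Lambda}})_+$ on the saturation set (obtained because the support concentrates near $r_*>0$, so $r$ is bounded below) with the $C_1\log\Lambda+C_2$ upper bound of Lemma \ref{ad2}, and fix $\Lambda_0$ so the comparison fails for small $\beta$. The only cosmetic difference is that you conclude the saturation set is empty while the paper only records that it has measure zero, which is all that is needed for the a.e.\ identity.
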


\begin{proof}
  Notice that
\begin{equation*}
  (\psi_{_{\beta,\Lambda}})_+\ge (\Lambda-\alpha\beta)r^2\ \ \text{on}\ \  \{\zeta_{_{\beta,\Lambda}}=\frac{\Lambda}{\beta^2}\} ,
\end{equation*}
from which we deduce that
\begin{equation}\label{0-4}
  (\psi_{_{\beta,\Lambda}})_+\ge C_1\Lambda-C_2,
\end{equation}
for some positive numbers $C_1$, $C_2$ independent of $\beta$, $\Lambda$.
Combining \eqref{0-4} with Lemma \ref{ad2}, we conclude that
\begin{equation*}
  C_3\log\Lambda\ge C_1\Lambda-C_4,
\end{equation*}
which is impossible when $\Lambda$ is chosen to be sufficiently large, say $\Lambda \ge \Lambda_0$. Hence $$m_2(\{\zeta_{_{\beta,\Lambda_0}}=\frac{\Lambda}{\beta^2}\})=0,$$ when $\beta$ is small enough, the proof is completed.
\end{proof}

\begin{remark}\label{rem1}
For all sufficiently small $\beta$, the stream-function $\psi_{_{\beta,\Lambda_0}}$ satisfies
\begin{equation*}
  \mathcal{L}\psi_{_{\beta,\Lambda_0}}=f(r,\psi_{_{\beta,\Lambda_0}})\ \ \ \  in\  D,
\end{equation*}
where $f(r,t)=\frac{t_+}{r^2\beta^2}+\frac{\alpha}{\beta}\chi_{\{t>0\}}$.
\end{remark}

For the rest of this section, we shall abbreviate $(\zeta_{_{\beta,\Lambda_0}},\psi_{_{\beta,\Lambda_0}}, \mu_{_{\beta,\Lambda_0}})$ to $(\zeta_{_\beta}, \psi_{_\beta}, \mu_{_\beta})$. When the concentrated location is inside the domain, we may have the following sharp estimates.
\begin{lemma}\label{le11}
Suppose $W>1/(4\pi d)$, then as $\beta \to 0^+$,
\begin{equation}\label{219}
E_\beta(\zeta_{_\beta})=(\frac{r_*}{4\pi}-\frac{Wr_*^2}{2})\log{\frac{1}{\beta}}+O(1),
\end{equation}
\begin{equation}\label{220}
\mu_{_\beta}=(\frac{r_*}{2\pi}-\frac{Wr_*^2}{2})\log{\frac{1}{\beta}}+O(1).
\end{equation}
\end{lemma}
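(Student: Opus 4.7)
The lower bound in~\eqref{219} is immediate from Lemma~\ref{le4} with the admissible choice $a=r_*=1/(4\pi W)$, which lies in $(0,d)$ because the assumption $W>1/(4\pi d)$ guarantees $r_*<d$; this yields
\[
E_\beta(\zeta_\beta)\ge\Bigl(\frac{r_*}{4\pi}-\frac{Wr_*^2}{2}\Bigr)\log\frac{1}{\beta}-C.
\]

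For the matching upper bound I would route the argument through the Lagrange multiplier $\mu_\beta$. By Proposition~\ref{ad3}, $\operatorname{supp}(\zeta_\beta)=\{\psi_\beta>0\}$, so $\psi_\beta$ vanishes on the topological boundary of its support; in particular $\psi_\beta(A_\beta,0)=0$, giving the identity
\[
\mu_\beta=\mathcal K\zeta_\beta(A_\beta,0)-\frac{WA_\beta^2}{2}\log\frac{1}{\beta}.
\]
Combining Lemmas~\ref{le1}--\ref{le2}, the concentration from Proposition~\ref{le10} (so that $\sqrt{A_\beta r'}=A_\beta+O(\beta^\gamma)$ on the support), and the bathtub principle applied to the density $r'\zeta_\beta\le C/\beta^2$ of unit $\nu$-mass, one obtains the sharp upper bound $\mathcal K\zeta_\beta(A_\beta,0)\le(A_\beta/2\pi)\log(1/\beta)+O(1)$; Lemma~\ref{le5} with $a=A_\beta$, combined with the trivial $\int r^2\zeta_\beta\,d\nu\ge A_\beta^2$, supplies the reverse inequality. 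Hence
\[
\mu_\beta=\Bigl(\frac{A_\beta}{2\pi}-\frac{WA_\beta^2}{2}\Bigr)\log\frac{1}{\beta}+O(1).
\]
Substituting this into Lemma~\ref{ad1} gives $2E_\beta\le\Gamma_1(A_\beta)\log(1/\beta)+O(1)$. The critical identity $\Gamma_1'(r_*)=0$---forced by $r_*=1/(4\pi W)$---rewrites $\Gamma_1(A_\beta)=\Gamma_1(r_*)-W(A_\beta-r_*)^2$, so $2E_\beta\le\Gamma_1(r_*)\log(1/\beta)+O(1)$; together with the lower bound this closes~\eqref{219} and simultaneously forces the quadratic deficit $W(A_\beta-r_*)^2\log(1/\beta)=O(1)$.

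For~\eqref{220}, I plug~\eqref{219} into Lemma~\ref{ad1} to reduce the claim to the moment estimate $\int r^2\zeta_\beta\,d\nu=r_*^2+O(1/\log(1/\beta))$. Matching this with the sharp formula for $\mu_\beta$ above yields $\int r^2\zeta_\beta\,d\nu=A_\beta^2+O(1/\log(1/\beta))$, and the twin computation at the other extremal point $(B_\beta,0)$ gives $\int r^2\zeta_\beta\,d\nu=B_\beta^2+O(1/\log(1/\beta))$; in particular $B_\beta-A_\beta=O(1/\log(1/\beta))$. A final bootstrap using Lemma~\ref{le5} with $a=r_*$ together with the non-degeneracy of the first-order coefficient of $\mu_\beta$ in $(A_\beta-r_*)$ upgrades the individual localisations to $|A_\beta-r_*|,|B_\beta-r_*|=O(1/\log(1/\beta))$, which then gives~\eqref{220}.

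The main obstacle will be this final bootstrap. The quadratic vanishing $\Gamma_1'(r_*)=0$ directly yields only $|A_\beta-r_*|=O(1/\sqrt{\log(1/\beta)})$, which is not sharp enough to produce the $O(1)$ remainder in~\eqref{220}. Extracting the sharp linear rate forces one to pair the sharp formulas for $\mu_\beta$ at both extremal support points $(A_\beta,0)$ and $(B_\beta,0)$, to exploit the non-vanishing derivative of the local profile $t\mapsto t/(2\pi)-Wt^2/2$ at $r_*$, and to keep track of all $O(1)$ constants through the bathtub estimate for $\mathcal K\zeta_\beta$ carefully---this is the most delicate step of the argument.
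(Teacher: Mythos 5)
Your treatment of \eqref{219} is correct and essentially the paper's argument: the lower bound is Lemma \ref{le4} with $a\to r_*$ (admissible since $W>1/(4\pi d)$ gives $r_*<d$), and your upper bound --- Green's function expansion via Lemmas \ref{le1}--\ref{le2}, the bathtub bound $\le\log\frac{1}{\beta}+O(1)$ for the logarithmic potential of the unit-mass density $r'\zeta_{_\beta}\le C\beta^{-2}$, and the constraint $\int_D r^2\zeta_{_\beta}\,d\nu\ge A_\beta^2$ --- is the same computation the paper performs, except that the paper bounds the quadratic form $\int_D\zeta_{_\beta}\mathcal K\zeta_{_\beta}\,d\nu$ directly (quoting Lemma 4.2 of \cite{Tur83}) rather than evaluating $\mathcal K\zeta_{_\beta}$ at the single point $(A_\beta,0)$ and using $\psi_{_\beta}(A_\beta,0)=0$. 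Your identity $\psi_{_\beta}(A_\beta,0)=0$ is legitimate (points $(r,0)$ with $r<A_\beta$ lie outside $\operatorname{supp}\zeta_{_\beta}=\overline{\{\psi_{_\beta}>0\}}$, and $\psi_{_\beta}$ is continuous), with the minor caveat that the constant in Lemma \ref{le5} depends on $a$, so taking $a=A_\beta$ needs the easy remark that it is uniform for $a$ in compact subsets of $(0,d)$, which holds because $A_\beta\to r_*$.

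The genuine gap is \eqref{220}, and you have diagnosed it yourself: writing $L=\log\frac1\beta$, everything you actually establish yields only $|A_\beta-r_*|=O(L^{-1/2})$, hence $\mu_{_\beta}=(\frac{r_*}{2\pi}-\frac{Wr_*^2}{2})L+O(L^{1/2})$ rather than $O(1)$, and the bootstrap you sketch cannot repair this. First, the ``twin computation'' at $(B_\beta,0)$ gives only the trivial direction $\int_D r^2\zeta_{_\beta}d\nu\le B_\beta^2$; the reverse would require the sharp lower bound $\mathcal K\zeta_{_\beta}(B_\beta,0)\ge\frac{B_\beta}{2\pi}L-O(1)$, which needs $\operatorname{diam}(\operatorname{supp}\zeta_{_\beta})=O(\beta)$ --- but that is Lemma \ref{sharp}, proved in the paper \emph{after} and by means of Lemma \ref{le11}; at this stage only $\operatorname{diam}\le 4d\beta^{\gamma}$ is available, which loses a factor $\gamma$. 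Second, and more fundamentally, $B_\beta-A_\beta\le 4d\beta^{\gamma}\ll L^{-1}$ already, so comparing the two extremal support points carries no information about the distance of $A_\beta$ to $r_*$. Third, in your final step (Lemma \ref{le5} with $a=r_*$ played against $\mu_{_\beta}=h(A_\beta)L+O(1)$, where $h(t)=\frac{t}{2\pi}-\frac{Wt^2}{2}$) the first-order terms cancel identically, because $h'(r_*)=\frac1{4\pi}=Wr_*$ is exactly the coefficient generated by $\int_D r^2\zeta_{_\beta}d\nu\ge A_\beta^2$; only the quadratic deficit survives, which is precisely how you obtained $O(L^{-1/2})$ and is all these inequalities can ever give. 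Note also that the paper does not attempt any such bootstrap: it deduces \eqref{220} in one line by substituting \eqref{219} and the moment $\int_D r^2\zeta_{_\beta}\,d\nu$ (which converges to $r_*^2$) into Lemma \ref{ad1}. To justify the claimed $O(1)$ remainder along your route you would need the quantitative estimate $\int_D r^2\zeta_{_\beta}\,d\nu=r_*^2+O(L^{-1})$, equivalently $|A_\beta-r_*|=O(L^{-1})$, and neither your argument nor the lemmas you invoke provide it; so the proof of \eqref{220} in your proposal is not closed.
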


\begin{proof}
We first prove $\eqref{219}$. According to Proposition $\ref{le10}$, there holds
\begin{equation*}
  supp\,(\zeta_{_\beta})\subseteq B_{4d\beta^{\frac{1}{2}}}((A_\beta,0))
\end{equation*}
for all sufficiently small $\beta$. Hence, by Lemmas $\ref{le1}$ and $\ref{le2}$, we have
\begin{equation*}
  \int_D\zeta_{_\beta} \mathcal{K} \zeta_{_\beta} d\nu \le\frac{(B_\beta)^3}{2\pi}\iint_{D\times D}\log[(r-r')^2+(z-z')^2]^{-\frac{1}{2}}\zeta_{_\beta}(r,z)\zeta_{_\beta}(r',z')drdzdr'dz'+O(1).
\end{equation*}
By Lemma 4.2 of \cite{Tur83}, we have
\begin{equation*}
\iint_{D\times D}\log[(r-r')^2+(z-z')^2]^{-\frac{1}{2}}\zeta_{_\beta}(r,z)\zeta_{_\beta}(r',z')drdzdr'dz'\le \log\frac{1}{\beta}\{\int_D\zeta_{_\beta} drdz\}^2+O(1).
\end{equation*}
Notice that
\begin{equation*}
 \int_D\zeta_{_\beta} drdz\le \frac{1}{A_\beta}.
\end{equation*}
Therefore
\begin{equation*}
   \int_D\zeta_{_\beta} \mathcal{K} \zeta_{_\beta} d\nu\le \frac{(B_\beta)^3}{2\pi (A_\beta)^2}\log\frac{1}{\beta}+O(1)\le \frac{A_\beta}{2\pi}\log\frac{1}{\beta}+O(1),
\end{equation*}
from which we conclude that
\begin{equation*}
  E_\beta(\zeta_{_\beta})\le (\frac{A_\beta}{4\pi}-\frac{W(A_\beta)^2}{2})\log{\frac{1}{\beta}}+O(1)\le (\frac{r_*}{4\pi}-\frac{Wr_*^2}{2})\log{\frac{1}{\beta}}+O(1).
\end{equation*}
Combining this with Lemma $\ref{le4}$, we get $\eqref{219}$. Now $\eqref{220}$ clearly follows from Lemma \ref{ad1}, the proof is completed.
\end{proof}

Using Lemma $\ref{le11}$, we can sharpen the first estimate in Proposition $\ref{le10}$ .
\begin{lemma}\label{sharp}
 Suppose $W>1/(4\pi d)$, then there exists a constant $R_1>1$ independent of $\beta$ such that $diam(supp(\zeta_{_\beta}))\le R_1\beta$.
\end{lemma}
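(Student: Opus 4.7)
The approach is to leverage the uniform bound $0 \le \psi_\beta \le C$ on $\operatorname{supp}(\zeta_\beta)$ from Lemma~\ref{ad2}, together with the sharp asymptotic $\mu_\beta = (\tfrac{r_*}{2\pi} - \tfrac{W r_*^2}{2})\log\tfrac{1}{\beta} + O(1)$ from Lemma~\ref{le11}, and to compare $\mathcal{K}\zeta_\beta$ evaluated simultaneously at two points of the support.

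Take any $x_0, x_1 \in \operatorname{supp}(\zeta_\beta)$ and set $L := |x_0 - x_1|$. From $\psi_\beta \ge 0$ at both points,
\[
\mathcal{K}\zeta_\beta(x_0) + \mathcal{K}\zeta_\beta(x_1) \ge \tfrac{W}{2}(r_{x_0}^2 + r_{x_1}^2)\log\tfrac{1}{\beta} + 2\mu_\beta.
\]
The sharp energy estimate of Lemma~\ref{le11}, combined with the rearrangement upper bound on $\int \zeta_\beta \mathcal{K}\zeta_\beta\, d\nu$, refines Lemma~\ref{le7} to $\int r^2 \zeta_\beta\, d\nu = r_*^2 + O(1/\log(1/\beta))$. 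Using this to select $x_0, x_1$ with $r_{x_i}^2 = r_*^2 + O(1/\log(1/\beta))$, one obtains
\[
\mathcal{K}\zeta_\beta(x_0) + \mathcal{K}\zeta_\beta(x_1) \ge \tfrac{r_*}{\pi}\log\tfrac{1}{\beta} + O(1).
\]

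For the matching upper bound, Lemmas~\ref{le1}--\ref{le2} (with $r_y \to r_*$ on the support) give
\[
\mathcal{K}\zeta_\beta(x_0) + \mathcal{K}\zeta_\beta(x_1) \le \tfrac{r_*^2}{2\pi}\int_D \log\tfrac{1}{|x_0-y|\,|x_1-y|}\,\zeta_\beta(y)\,dy + O(1).
\]
Apply the bathtub principle to $f(y) := \log(|x_0-y|^{-1}|x_1-y|^{-1})$ under the constraints $0 \le \zeta_\beta \le \Lambda_0/\beta^2$, $\int \zeta_\beta\, dy = K_\beta$. For $L \gg \beta$, the super-level set $\{f \ge t\}$ at the mass-critical threshold (where the area equals $K_\beta \beta^2 / \Lambda_0$) is a Cassini oval decomposing into two near-disks of radius of order $\beta$ centred at $x_0, x_1$. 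An explicit integration yields
\[
\int_D \log\tfrac{1}{|x_0-y|\,|x_1-y|}\,\zeta_\beta(y)\,dy \le K_\beta\bigl(\log\tfrac{1}{\beta} + \log\tfrac{1}{L}\bigr) + O(1),
\]
so that, using $K_\beta \to 1/r_*$,
\[
\mathcal{K}\zeta_\beta(x_0) + \mathcal{K}\zeta_\beta(x_1) \le \tfrac{r_*}{2\pi}\log\tfrac{1}{\beta} + \tfrac{r_*}{2\pi}\log\tfrac{1}{L} + O(1).
\]

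Comparing the two bounds gives $\log(L/\beta) = O(1)$, i.e., $L \le R_1\beta$ for an absolute constant $R_1$. Since this holds for any suitable pair of points in the support, $\operatorname{diam}(\operatorname{supp}(\zeta_\beta)) \le R_1\beta$. The chief technical hurdle is the bathtub calculation on the Cassini-oval super-level sets: in the regime $L \gg \beta$, one must make precise the decomposition of $\{f \ge t\}$ into two nearly circular lobes of radius $\sim \beta$ and track the $O(1)$ error in the resulting integral formula. A secondary issue is establishing the refined moment estimate $\int r^2 \zeta_\beta\, d\nu = r_*^2 + O(1/\log(1/\beta))$, needed to place the two reference points on the ``equator'' $r \approx r_*$.
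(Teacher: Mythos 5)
Your central idea --- comparing $\mathcal{K}\zeta_\beta$ at two support points $x_0,x_1$ against a two-point bathtub (Cassini-oval) upper bound to force $\log(L/\beta)=O(1)$ --- is sound and genuinely different from the paper's argument, but as written the proof has a real gap where you pass from the actual radial coordinates $r_{x_i}$ to $r_*$. The sharp estimates available (Lemma \ref{le4} with $a$ near $r_*$ together with the upper bound on $E_\beta(\zeta_\beta)$ in the proof of Lemma \ref{le11}) only give $\Gamma_1(A_\beta)\ge \Gamma_1(r_*)-O(1/\log(1/\beta))$, and since $\Gamma_1(t)=\Gamma_1(r_*)-W(t-r_*)^2$, this localizes the support only to $|r-r_*|=O(1/\sqrt{\log(1/\beta)})$; nothing gives a faster rate. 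Consequently your claimed refinement $\int r^2\zeta_\beta\,d\nu=r_*^2+O(1/\log(1/\beta))$ is not justified (one gets $A_\beta^2+O(1/\log(1/\beta))$, hence only $r_*^2+O(1/\sqrt{\log(1/\beta)})$), there need not exist any support points with $r_{x_i}^2=r_*^2+O(1/\log(1/\beta))$, and the error $\tfrac{W}{2}(r_{x_i}^2-r_*^2)\log\tfrac1\beta$ in your lower bound can be of size $\sqrt{\log(1/\beta)}$, which destroys the $O(1)$ bookkeeping. Moreover, even granting the selection, bounding $|x_0-x_1|$ for specially chosen ``equatorial'' points does not bound $\mathrm{diam}(\mathrm{supp}\,\zeta_\beta)$: the closing sentence is a non sequitur, because an arbitrary support point need not be one of your selected points, and a priori the support could still spread over a scale $\sim 1/\sqrt{\log(1/\beta)}\gg\beta$ in $r$.

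Both defects are repairable, and the repair makes $r_*$ disappear from the argument. At arbitrary support points use $\psi_\beta\ge 0$ together with $\mu_\beta\ge(\tfrac{A_\beta}{2\pi}-\tfrac{W A_\beta^2}{2})\log\tfrac1\beta-C$ (from Lemma \ref{le11}) and $r_{x_i}\ge A_\beta$, so the two-point lower bound becomes $\tfrac{A_\beta}{\pi}\log\tfrac1\beta-C$; on the upper side use Proposition \ref{le10} (so $r=A_\beta+O(\beta^{1/2})$ on the support, making the kernel prefactor $A_\beta^2+o(1)$ and $\int\zeta_\beta\,d\textit{m}_2\le 1/A_\beta+o(1)$), so your Cassini/bathtub estimate (which is fine in the only relevant regime $L\ge K\beta$) yields $\tfrac{A_\beta}{2\pi}(\log\tfrac1\beta+\log\tfrac1L)+C$; comparing gives $\log(L/\beta)=O(1)$ for \emph{every} pair of support points and hence the diameter bound. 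The paper's own proof avoids the two-point kernel altogether: for each single support point it splits $\mathcal{K}\zeta_\beta$ into the regions $\{\sigma>R\beta\}$ and $\{\sigma\le R\beta\}$, uses \eqref{220}--\eqref{223} to deduce $\int_{\{\sigma\le R\beta\}}\zeta_\beta\,d\nu\ge 1-C/\log R>1/2$ for a fixed large $R$, i.e.\ more than half of the unit vorticity mass lies within $2dR\beta$ of every support point, and concludes since two disjoint balls each carrying more than half of the mass are impossible; this is more elementary, while your (corrected) route buys a direct pairwise distance estimate at the cost of the Cassini-oval computation.
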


\begin{proof}
We follow the strategy of \cite{Tur83}. Let us use the same notation as in the proof of Lemma $\ref{le6}$. According to Proposition $\ref{le10}$, there holds
\begin{equation*}
  supp(\zeta_{_\beta})\subseteq B_{4d\beta^{\frac{1}{2}}}\big((A_\beta,0)\big)
\end{equation*}
for all sufficiently small $\beta$.
Let $R>1$ to be determined. By Lemma $\ref{le2}$, we have
\begin{equation}\label{221}
\begin{split}
   I_1 &=\int\limits_{D\cap\{\sigma>R\beta\}}G(r_\beta,z_\beta\,r',z')\zeta_{_\beta}(r',z')r'dr'dz'\\
       &\le \frac{(A_\beta)^2+O(\beta^{\frac{1}{2}})}{2\pi}\int\limits_{D\cap\{\sigma>R\beta\}}\log{\frac{1}{\sigma}}\zeta_{_\beta}(r',z') dr'dz'+C\\
       &\le \frac{(A_\beta)^2+O(\beta^{\frac{1}{2}})}{2\pi}\log{\frac{1}{R\beta}}\int\limits_{D\cap\{\sigma>R\beta\}}\zeta_{_\beta}(r',z') dr'dz'+C\\
       &\le \frac{A_\beta}{2\pi}\log{\frac{1}{R\beta}}\int\limits_{D\cap\{\sigma>R\beta\}}\zeta_{_\beta} d\nu+C,
\end{split}
\end{equation}
and
\begin{equation}\label{222}
\begin{split}
   I_2 &=\int\limits_{D\cap\{\sigma \le R\beta\}}G(r_\beta,z_\beta,r',z')\zeta_{_\beta}(r',z')r'dr'dz'\\
       &\le \frac{(A_\beta)^2+O(\beta^\frac{1}{2})}{2\pi}\int\limits_{D\cap\{\sigma \le R\beta\}}\log\frac{1}{[(r_\beta-r')^2+(z_\beta-z')^2]^{\frac{1}{2}}}\zeta_{_\beta}(r',z')dr'dz'+C\\
       &\le \frac{(A_\beta)^2+O(\beta^\frac{1}{2})}{2\pi}\log\frac{1}{\beta}\int\limits_{D\cap\{\sigma \le R\beta\}}\zeta_{_\beta} dr'dz'+C\\
       & \le \frac{A_\beta}{2\pi}\log\frac{1}{\beta}\int\limits_{D\cap\{\sigma \le R\beta\}}\zeta_{_\beta} d\nu+C.
\end{split}
\end{equation}
On the other hand, by Lemma $\ref{le11}$, one has
\begin{equation}\label{223}
  \mu_{_\beta}\ge(\frac{A_\beta}{2\pi}-\frac{W(A_\beta)^2}{2})\log\frac{1}{\beta}-C.
\end{equation}
Combining $\eqref{220}$, $\eqref{221}$ and $\eqref{222}$, we obtain
\begin{equation*}
  \frac{A_\beta}{2\pi}\log\frac{1}{\beta} \le \frac{A_\beta}{2\pi}\log{\frac{1}{R\beta}}\int\limits_{D\cap\{\sigma>R\beta\}}\zeta_{_\beta} d\nu+\frac{A_\beta}{2\pi}\log\frac{1}{\beta}\int\limits_{D\cap\{\sigma \le R\beta\}}\zeta_{_\beta} d\nu+C.
\end{equation*}
Hence
\begin{equation*}
  \int\limits_{D\cap\{\sigma \le R\beta\}}\zeta_{_\beta} d\nu\ge 1-\frac{C}{\log R}.
\end{equation*}
Taking $R$ large enough such that $C(\log R)^{-1}<1/2$, we obtain
\begin{equation*}
  \int\limits_{D\cap{B_{2dR\beta}((r_\beta,z_\beta))}}\zeta_{_\beta} d\nu\ge\int\limits_{D\cap\{\sigma \le R\beta\}}\zeta_{_\beta} d\nu> \frac{1}{2}.
\end{equation*}
Taking $R_1=4dR$, we clearly get the desired result.
\end{proof}

We now investigate the asymptotic shape of the optimal vortices when $W>1/(4\pi d)$.
We define the center of vorticity to be
\begin{equation}\label{224}
  X_\beta=\int x\zeta_\beta(x)d\textit{m}_2(x)
\end{equation}
Then by Proposition $\ref{le10}$ we know that $X_\beta \to (r_*,0)$ as $\beta \to 0^+$. Let $g_{_\beta} \in L^\infty(B_{R_1}(0))$ be defined by
\begin{equation}\label{225}
  g_{_\beta}(x)={\beta^2}\zeta_{_\beta}(X_\beta+\beta x)
\end{equation}
for fixed $R_1$ as in Lemma $\ref{sharp}$. We denote by $g^\bigtriangleup_{_\beta}$ the symmetric radially nonincreasing Lebesgue-rearrangement of $g_\beta$ centered on 0. It is clearly that $supp(g^\bigtriangleup_{_\beta})\subseteq B_{R_2\beta}(0)$ for some $R_2\ge R_1$.

The following result determines the asymptotic nature of $\zeta_{_\beta}$ in terms of its scaled version.

\begin{lemma}\label{le12}
Suppose $W>1/(4\pi d)$, then every accumulation point of the family $\{g_{_\beta}:\beta>0\}$ in the weak topology of $L^2$ must be a radially nonincreasing function.
\end{lemma}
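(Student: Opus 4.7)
The approach is a rearrangement-based comparison that exploits the maximality of $\zeta_\beta$ together with Riesz's rearrangement inequality and its rigidity. Let $g_\beta^\triangle$ be the Schwarz symmetric decreasing rearrangement of $g_\beta$ in $\mathbb{R}^2$ (centred at the origin) and define
\[
\hat\zeta_\beta(x) := \beta^{-2}\, g_\beta^\triangle\big((x - X_\beta)/\beta\big).
\]
Since $X_\beta \to (r_*, 0) \in D$ by Proposition \ref{le10} and $\text{supp}(g_\beta^\triangle) \subseteq \overline{B_{R_2}(0)}$, the support of $\hat\zeta_\beta$ lies in $D$ for all sufficiently small $\beta$. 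Equimeasurability gives $0 \leq \hat\zeta_\beta \leq \Lambda_0/\beta^2$, and a change of variables yields $\int \hat\zeta_\beta\, d\nu = 1 + O(\beta)$; multiplying by a normalising factor $1 + O(\beta)$ produces an element of $\mathcal{A}_{\beta, \Lambda_0}$ whose energy differs from $E_\beta(\hat\zeta_\beta)$ by only $o(1)$.

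Using the Green-function expansion of Lemmas \ref{le1}--\ref{le2}, the diameter estimate of Lemma \ref{sharp}, and the change of variables $x = X_\beta + \beta y$, one derives, for any competitor $\tilde\zeta$ concentrated near $X_\beta$ with scaled profile $\tilde g(y) := \beta^2 \tilde\zeta(X_\beta + \beta y)$, the asymptotic expansion
\[
E_\beta(\tilde\zeta) = \Big(\frac{r_*}{4\pi} - \frac{Wr_*^2}{2}\Big)\log\frac{1}{\beta} + \frac{r_*^3}{4\pi} L[\tilde g] - \frac{r_*^3}{2}\int \tilde g^2\, dy + C(X_\beta, m_\beta) + o(1),
\]
where $L[g] := -\iint g(y)g(y')\log|y - y'|\, dy\, dy'$, $m_\beta := \int \tilde g\, dy$, and the smooth remainder $C(X_\beta, m_\beta)$ depends only on the localisation point and the mass. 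Because $g_\beta$ and $g_\beta^\triangle$ are equimeasurable, the quadratic and smooth contributions cancel in the comparison, so
\[
E_\beta(\hat\zeta_\beta) - E_\beta(\zeta_\beta) = \frac{r_*^3}{4\pi}\big(L[g_\beta^\triangle] - L[g_\beta]\big) + o(1).
\]
The maximality of $\zeta_\beta$ forces the left-hand side to be at most $o(1)$, while Riesz's rearrangement inequality supplies $L[g_\beta^\triangle] - L[g_\beta] \ge 0$; consequently $L[g_\beta^\triangle] - L[g_\beta] \to 0$.

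Extract a subsequence along which $g_\beta \rightharpoonup g$ in $L^2$ weak. The family $\{g_\beta^\triangle\}$ is uniformly bounded, compactly supported, and radially nonincreasing, so Helly's selection theorem on the radial profiles yields a further subsequence converging pointwise a.e.---and hence strongly in every $L^p$, $1 \le p < \infty$---to some radially nonincreasing $h$. By Calder\'on--Zygmund theory the log potentials $-\log|\cdot| * g_\beta$ are uniformly bounded in $W^{2,p}_{\text{loc}}$ and converge strongly in $C^{0,\alpha}_{\text{loc}}$; combined with the weak $L^2$-convergence of $g_\beta$ this gives $L[g_\beta] \to L[g]$ and $L[g_\beta^\triangle] \to L[h]$, hence $L[g] = L[h]$. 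Now repeat the construction of $\hat\zeta_\beta$ with $g_\beta^\triangle$ replaced by the \emph{fixed} Schwarz rearrangement $g^\triangle$ of the weak limit $g$ itself; since $g^\triangle$ has the same $L^\infty$-bound as $g_\beta^\triangle$ and the same mass in the limit, the resulting competitor is admissible up to $o(1)$. Maximality and the expansion yield
\[
\frac{r_*^3}{4\pi}\big(L[g^\triangle] - L[g_\beta]\big) - \frac{r_*^3}{2}\int\big((g^\triangle)^2 - g_\beta^2\big)\, dy \le o(1).
\]
Taking $\limsup_{\beta \to 0^+}$ and using $\int (g^\triangle)^2 = \int g^2 \le \liminf \int g_\beta^2$ (equimeasurability of $g$ with $g^\triangle$ and weak lower semicontinuity of the $L^2$-norm), one obtains $L[g^\triangle] \le L[g]$; combined with Riesz's $L[g^\triangle] \ge L[g]$, this forces $L[g] = L[g^\triangle]$. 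The equality case of Riesz's rearrangement inequality for the strictly radially decreasing planar log kernel (Lieb's rigidity theorem) then implies that $g$ is a translate of $g^\triangle$, i.e.\ $g$ is a radially nonincreasing function about some centre, as asserted.

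The main obstacle is the precise bookkeeping of the $O(1)$ remainders in the energy expansion: one must verify that the smooth pieces $C(X_\beta, m_\beta)$ really do cancel between $\zeta_\beta$ and the equimeasurable competitor $\hat\zeta_\beta$, and differ by only $o(1)$ when the masses merely agree in the limit (as for the competitor built from $g^\triangle$). The final rigidity step also requires care, since one must convert the equality of log energies into pointwise (up to translation) equality of functions via the strict form of Riesz's rearrangement inequality applied to the planar log kernel.
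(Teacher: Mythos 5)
Your first comparison is exactly the paper's argument: build the competitor from the rearranged profile $g^\bigtriangleup_{_\beta}$, use exact equimeasurability so that all terms except the logarithmic interaction cancel, and combine Riesz's inequality with maximality to force the interaction energies to coincide in the limit, i.e.\ $L[g]=L[h]$ where $h$ is the limit of $g^\bigtriangleup_{_\beta}$ (your Helly/strong-convergence detour is harmless but unnecessary; weak convergence suffices, as in the paper). The divergence is in the rigidity step: the paper invokes Lemma 3.2 of \cite{BG}, which converts $L[g]=L[h]$ directly into ``$g$ is a translate of $h$'', and then kills the translation because $\int xg\,d\textit{m}_2=\int xh\,d\textit{m}_2=0$; you instead run a second energy comparison against a competitor built from the fixed rearrangement $g^\bigtriangleup$ of the weak limit and appeal to the equality case of Riesz (Lieb's rigidity). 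That alternative is viable in principle, but as written it has a genuine gap.

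The gap is the one you flag but do not close. Your expansion assigns to every localized competitor the universal leading term $(\tfrac{r_*}{4\pi}-\tfrac{Wr_*^2}{2})\log\tfrac1\beta$ plus a remainder $C(X_\beta,m_\beta)$ of order one. In fact the coefficient of $\log\tfrac1\beta$ depends on the competitor's mass (before any normalization it is roughly $\tfrac{X_{\beta,1}^3m^2}{4\pi}-\tfrac{W X_{\beta,1}^3 m}{2}$), so comparing $\zeta_{_\beta}$ (profile $g_{_\beta}$, mass $m_\beta$) with the $g^\bigtriangleup$-competitor (mass $m=\int g$) produces an error of size $|m-m_\beta|\log\tfrac1\beta$; since $m_\beta\to m$ at an uncontrolled rate, ``masses agreeing in the limit'' gives only $o(1)\cdot\log\tfrac1\beta$, not $o(1)$. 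This is fixable, but the fix must be stated: rescale the competitor by $c_\beta$ so that its $\nu$-mass is exactly $1$ (then $c_\beta X_{\beta,1}\int g^\bigtriangleup=1+O(\beta)$, hence $c_\beta\to1$); after this normalization the $\log\tfrac1\beta$-coefficients of both the quadratic and the linear terms depend only on $X_{\beta,1}$ (they become $\tfrac{X_{\beta,1}}{4\pi}$ and $-\tfrac{WX_{\beta,1}^2}{2}$) and cancel against those of $\zeta_{_\beta}$ up to $O(\beta\log\tfrac1\beta)$, while $c_\beta^2L[g^\bigtriangleup]\to L[g^\bigtriangleup]$ and $c_\beta^2\int(g^\bigtriangleup)^2\to\int(g^\bigtriangleup)^2$, so your limsup/lower-semicontinuity argument then goes through. (Note this exact-mass issue is why the paper only ever compares with the rearrangement of $g_{_\beta}$ itself, for which the discrepancy is $O(\beta)$.) Finally, your conclusion ``radially nonincreasing about some centre'' is weaker than the statement of Lemma \ref{le12}: to place the centre at the origin you still need the paper's last observation that the blow-up is taken about the centre of vorticity, so $\int xg\,d\textit{m}_2=\int xg^\bigtriangleup\,d\textit{m}_2=0$ and the translation produced by the rigidity theorem is trivial.
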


\begin{proof}
Let us assume that $g_{_\beta} \to g$ and $g^\bigtriangleup_\beta \to h$ weakly in $L^2$ for some functions $g$ and $h$ as $\beta \to 0^+$. Firstly, by virtue of Riesz' rearrangement inequality, we have
\begin{equation*}
     \iint\limits_{B_{R_2}(0)\times B_{R_2}(0)}\log\frac{1}{|x-x'|}g_{_\beta}(x)g_{_\beta}(x')d(x,x')
      \le \iint\limits_{B_{R_2}(0)\times B_{R_2}(0)}\log\frac{1}{|x-x'|}g^\bigtriangleup_{_\beta}(x)g^\bigtriangleup_{_\beta}(x')d(x,x').
\end{equation*}
Hence
\begin{equation}\label{226}
  \iint\limits_{B_{R_2}(0)\times B_{R_2}(0)}\log\frac{1}{|x-x'|}g(x)g(x')d(x,x')\le \iint\limits_{B_{R_2}(0)\times B_{R_2}(0)}\log\frac{1}{|x-x'|}h(x)h(x')d(x,x').
\end{equation}
Let $\tilde{\zeta}_\beta$ be defined as
\[
\tilde{\zeta}_\beta(x)=\left\{
   \begin{array}{lll}
        \beta^2g^\bigtriangleup_{_\beta}(\beta^{-1}(x-X_\beta)) &    \text{if} & x\in B_{R_1\beta}(X_\beta), \\
         0                  &    \text{if} & x\in D\backslash B_{R_1\beta}(X_\beta).
    \end{array}
   \right.
\]
A direct calculation then yields as $\beta\to 0$,
\begin{equation*}
\begin{split}
     E_\beta(\zeta_{_\beta}) =&\frac{(A_\beta)^3}{4\pi}\iint\limits_{B_{R_2}(0)\times B_{R_2}(0)}\log\frac{1}{|x-x'|}g_\beta(x)g_\beta(x')d(x,x') \\
     & +\frac{(A_\beta)^3}{4\pi}\big{\{}\int_D \zeta_{_\beta} d\textit{m}_2\big{\}}^2-\frac{W(A_\beta)^3}{2}\log\frac{1}{\beta}\int_D\zeta_{_\beta} d\textit{m}_2+\mathcal{R}_\beta(\zeta_{_\beta})+o(1),
\end{split}
\end{equation*}
and
\begin{equation*}
\begin{split}
     E_\beta(\tilde{\zeta}_{_\beta}) =&\frac{(A_\beta)^3}{4\pi}\iint\limits_{B_{R_2}(0)\times B_{R_2}(0)}\log\frac{1}{|x-x'|}g^\bigtriangleup_{_\beta} (x)g^\bigtriangleup_{_\beta} (x')d(x,x') \\
     & +\frac{(A_\beta)^3}{4\pi}\big{\{}\int_D \zeta_{_\beta} d\textit{m}_2\big{\}}^2-\frac{W(A_\beta)^3}{2}\log\frac{1}{\beta}\int_D\zeta_{_\beta} d\textit{m}_2+\mathcal{R}_\beta(\tilde{\zeta}_{_\beta})+o(1),
\end{split}
\end{equation*}
where
\begin{equation*}
  \lim_{\beta\to 0^+}\mathcal{R}_\beta(\zeta_{_\beta})=\lim_{\beta\to 0^+}\mathcal{R}_\beta(\tilde{\zeta}_{_\beta})< \infty.
\end{equation*}
Recalling that $E_\beta(\tilde{\zeta}_{_\beta})\le E_\beta(\zeta_{_\beta})$, we conclude that
\begin{equation*}
  \iint\limits_{B_{R_2}(0)\times B_{R_2}(0)}\log\frac{1}{|x-x'|}g(x)g(x')d(x,x')\le \iint\limits_{B_{R_2}(0)\times B_{R_2}(0)}\log\frac{1}{|x-x'|}h(x)h(x')d(x,x'),
\end{equation*}
which together with $\eqref{226}$ yield to
\begin{equation*}
    \iint\limits_{B_{R_2}(0)\times B_{R_2}(0)}\log\frac{1}{|x-x'|}g(x)g(x')d(x,x')= \iint\limits_{B_{R_2}(0)\times B_{R_2}(0)}\log\frac{1}{|x-x'|}h(x)h(x')d(x,x').
\end{equation*}
By Lemma 3.2 of \cite{BG}, we know that there exists a translation $T$ in $\mathbb{R}^2$ such that $Tg=h$. Note that
\begin{equation*}
  \int_{B_{R_2}(0)}xg(x)d\textit{m}_2=\int_{B_{R_2}(0)}xh(x)d\textit{m}_2=0.
\end{equation*}
Thus $g=h$, the proof is completed.
\end{proof}

\begin{proof}[Proof of Theorem \ref{thm1}]
Let $(\psi_{_\beta}, \zeta_{_\beta}, \mu_{_\beta})=(\psi_{_{\beta, \Lambda_0}}, \zeta_{_{\beta, \Lambda_0}}, \mu_{_{\beta, \Lambda_0}})$ as obtained in Lemma \ref{le3}, where $\Lambda_0$ is the large number given in Proposion \ref{ad3}.
  We now set $\xi_{_\beta}=\frac{1}{\beta}(\psi_{_\beta})_+$.  Then (i) and (ii) follow from Lemmas \ref{le3}, \ref{ad2} and Proposion \ref{ad3}. And (iii) follows from Proposition \ref{le10} and Lemmas \ref{le11}, \ref{sharp}. To prove (iv), we refer to \cite{B1}. (v) follows from Lemma \ref{le12}. It remains to prove that $(\psi_{_\beta}, \xi_{_\beta}, \zeta_{_\beta})$ satisfies \eqref{1-22} and \eqref{1-23}. Note that \eqref{1-22} is clear, we prove \eqref{1-23} here. For any $\varphi\in C_0^\infty(D)$, we have
\begin{equation*}
\begin{split}
     \int_D \Big[\zeta_{_\beta} \partial\big(\psi_{_\beta},\varphi\big)-\frac{1}{2r^2}\partial\big((\xi_{_\beta})^2,\varphi\big)\Big]drdz & =\int_D \Big[\zeta_{_\beta} \partial\big(\psi_{_\beta},\varphi \big)-\frac{(\psi_{_\beta})_+}{r^2\beta^2}\partial\big(\psi_{_\beta},\varphi\big)\Big]drdz \\
     & =\int_D\Big[\big(\zeta_{_\beta}-\frac{(\psi_{_\beta})_+}{r^2\beta^2}\big)\partial\big(\psi_{_\beta},\varphi\big)  \Big]drdz \\
     & =\int_D \frac{\alpha}{\beta}\chi_{\{\psi_{_\beta}>0\}}\partial(\psi_{_\beta},\varphi)drdz \\
     & =0,
\end{split}
\end{equation*}
where the last equality follows from Lemma \ref{burton}. The proof is completed.
\end{proof}

\section{Vortex Rings Outside a Ball}

In this section, we investigate vortex rings outside a ball. Although the idea is the same as in Section 3, the approach here is somewhat different.

Let $D=\{(r,z)\in \Pi ~|~ r^2+z^2>d^2\}$ for some fixed $d>0$.

Let $q(r,z)=r^2/(r^2+z^2)^{\frac{3}{2}}$, it is easy to check that $\mathcal{L}q=0$. For fixed $\alpha\ge0$, $\beta>0$ and $W>0$, we consider the energy as follows
\[E_\beta(\zeta)=\frac{1}{2}\int_D{\zeta \mathcal{K}\zeta}d\nu-\frac{{W}}{2}\log{\frac{1}{\beta}}\int_{D}r^2\zeta d\nu+\frac{W}{2}\log\frac{1}{\beta}\int_{D}\frac{r^2d^3}{(r^2+z^2)^{\frac{3}{2}}}\zeta d\nu-\frac{\beta^2}{2}\int_D r^2(\zeta-\frac{\alpha}{\beta})_{+}^2d\nu.\]

We introduce the function $\Gamma_2$ as follows
\begin{equation*}
  \Gamma_2(t)=\frac{t}{2\pi}-Wt^2+\frac{Wd^3}{t},\ t\in(0,+\infty).
\end{equation*}
Let $r_*\in[d,+\infty)$ such that $\Gamma_2(r_*)=\max_{t\in[d,+\infty)}\Gamma_2(t)$. It is easy to check that $r_*$ is unique. Moreover, $r_*=d$ if $W\ge1/(6\pi d)$; $r_*>d$ if $W<1/(6\pi d)$.
Let $D_1=\{(r,z)\in D~|~0<r<r_*+1, |z|<d+1\}$. We adopt the class of admissible functions $\mathcal{A}_{\beta,\Lambda}(D_1)$ as follows
\begin{equation*}
\mathcal{A}_{\beta,\Lambda}(D_1)=\{\zeta\in L^\infty(D)~|~ supp(\zeta)\subseteq D_1,\  0\le \zeta \le \frac{\Lambda}{\beta^2}~ \text{a.e.}, \int_{D}\zeta d\nu \le1\},
\end{equation*}
where the positive number $\Lambda>\max\{\alpha\beta,1\}$.

For any $\zeta\in \mathcal{A}_{\beta,\Lambda}(D_1)$, one can check that $\mathcal{K}\zeta \in W^{2,p}_{\text{loc}}(D)\cap C_{\text{loc}}^{1,\gamma}(\bar{D})$ for any $p>1$, $0<\gamma<1$.

\begin{lemma}\label{le13}
For every prescribed  $\alpha\ge0$, $1>\beta>0$, $W>0$ and $\Lambda>\max\{\alpha \beta,1\}$, there exists $\zeta_{_{\beta,\Lambda}} \in \mathcal{A}_{\beta,\Lambda}$ such that
\begin{equation}\label{400}
 E_\beta(\zeta_{_{\beta,\Lambda}})= \max_{\tilde{\zeta} \in \mathcal{A}_{\beta,\Lambda}}E_\beta(\tilde{\zeta})<+\infty.
\end{equation}
Moreover, there exists a Lagrange multiplier $\mu_{_{\beta,\Lambda}}\in \mathbb{R}$ such that
\begin{equation}\label{401}
\zeta_{_{\beta,\Lambda}}=\Big(\frac{\psi_{_{\beta,\Lambda}}}{r^2\beta^2}+\frac{\alpha}{\beta}\Big){\chi}_{_{D_1\cap\{0<\psi_{_{\beta,\Lambda}}<(\Lambda-\alpha\beta)r^2\}}}+\frac{\Lambda}{\beta^2}\chi_{_{D_1\cap\{\psi_{_{\beta,\Lambda}} \ge (\Lambda-\alpha\beta)r^2\}}} \ \ a.e.\  \text{in}\  D,
\end{equation}
where
\begin{equation*}
 \psi_{_{\beta,\Lambda}}=\mathcal{K}\zeta_{_{\beta,\Lambda}}-\frac{Wr^2}{2}\log{\frac{1}{\beta}}+\frac{Wr^2d^3}{2(r^2+z^2)^{\frac{3}{2}}}\log\frac{1}{\beta}-\mu_{_{\beta,\Lambda}}. 
\end{equation*}
Furthermore, whenever $E(\zeta_{_{\beta,\Lambda}})>0$ and $\mu_{_{\beta,\Lambda}}>0$ there holds $\int_D\zeta_{_{\beta,\Lambda}}d\nu=1$ and $\zeta_{_{\beta,\Lambda}}$ has compact support in $D$.
\end{lemma}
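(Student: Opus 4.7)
The plan is to mirror the argument of Lemma \ref{le3}, but account for three novelties of the exterior-ball setting: the extra linear term carrying the harmonic factor $q(r,z)=r^2/(r^2+z^2)^{3/2}$, the support constraint $\supp(\zeta)\subseteq D_1$, and the fact that $\partial D$ is no longer a straight line so we cannot freely translate in $z$. Since $D_1$ is bounded, every $\zeta\in \mathcal{A}_{\beta,\Lambda}(D_1)$ is bounded in $L^p(D,\nu)$ for every $p\in[1,\infty]$, so a maximizing sequence $\{\zeta_j\}$ admits a weakly convergent subsequence (say in $L^2$) with limit $\zeta\in \mathcal{A}_{\beta,\Lambda}(D_1)$, because the admissible class is closed under weak $L^2$ convergence (pointwise inequalities pass via Mazur/Fatou, and the support constraint is preserved since $D_1$ is bounded). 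Compactness of $\mathcal{K}\colon L^{10/7}(D,\nu)\to L^{10/3}(D,\nu)$ (cf.\ Lemma 4.8 of \cite{Dou2}) yields $\int \zeta_j\mathcal{K}\zeta_j\,d\nu\to\int\zeta\mathcal{K}\zeta\,d\nu$, while the two linear terms are weakly continuous since their coefficients $r^2$ and $r^2d^3/(r^2+z^2)^{3/2}$ are bounded on $D_1$ (note $r^2+z^2\ge d^2$ on $D$, so the $q$-term is even $L^\infty$ on all of $D$). The penalty $\int r^2(\zeta-\alpha/\beta)_+^2\,d\nu$ is weakly lower semicontinuous by convexity. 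Altogether $E_\beta(\zeta)\ge\limsup_j E_\beta(\zeta_j)$, which gives \eqref{400}.

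For the Euler–Lagrange characterization \eqref{401}, I would consider the one-sided variation $\zeta_{(s)}=\zeta+s(\tilde\zeta-\zeta)$ with $\tilde\zeta\in\mathcal{A}_{\beta,\Lambda}(D_1)$ and $s\in[0,1]$. The maximality yields
\[
\int_{D_1}(\tilde\zeta-\zeta)\Bigl(\mathcal{K}\zeta-\tfrac{Wr^2}{2}\log\tfrac{1}{\beta}+\tfrac{Wr^2d^3}{2(r^2+z^2)^{3/2}}\log\tfrac{1}{\beta}-\beta^2r^2(\zeta-\tfrac{\alpha}{\beta})_+\Bigr)d\nu\le 0.
\]
An adaptation of the bathtub principle \cite{Lieb}, exactly as in Lemma \ref{le3}, produces a real number $\mu_{_{\beta,\Lambda}}$ — namely $\mu_{_{\beta,\Lambda}}=\inf\{t:|\{F>t\}\cap D_1|\le 1\}$ where $F$ is the bracketed quantity — and the three-regime pointwise relations
\[
\psi_{_{\beta,\Lambda}} \ge \beta^2r^2(\zeta-\tfrac{\alpha}{\beta})_+ \text{ on }\{\zeta=\Lambda/\beta^2\},\quad
\psi_{_{\beta,\Lambda}}=\beta^2r^2(\zeta-\tfrac{\alpha}{\beta})_+ \text{ on }\{0<\zeta<\Lambda/\beta^2\},\quad
\psi_{_{\beta,\Lambda}}\le 0 \text{ on }\{\zeta=0\}\cap D_1.
\]
To rewrite these as \eqref{401}, I would argue that $m_2(\{0<\zeta\le\alpha/\beta\})=0$: on this set $\psi_{_{\beta,\Lambda}}=0$, hence $\mathcal{L}\psi_{_{\beta,\Lambda}}=0$ a.e.\ there; but since $\mathcal{L}q=0$, we have $\mathcal{L}\psi_{_{\beta,\Lambda}}=\zeta$ a.e., which forces $\zeta=0$ on that set — a contradiction unless it has measure zero.

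Finally, when $\mu_{_{\beta,\Lambda}}>0$ and $E_\beta(\zeta_{_{\beta,\Lambda}})>0$, the saturation $\int\zeta\,d\nu=1$ follows by the same contradiction argument as in Lemma \ref{le3}: if the mass constraint were slack one could admit two-sided variations $\zeta+s\phi$, dropping the multiplier and producing the identity $\{\zeta>0\}=\{\psi_{_{\beta,\Lambda}}+\mu_{_{\beta,\Lambda}}>0\}=\{\psi_{_{\beta,\Lambda}}+\mu_{_{\beta,\Lambda}}>\mu_{_{\beta,\Lambda}}\}$, which is impossible. For compactness of $\supp(\zeta_{_{\beta,\Lambda}})$ in $D$, the essential observation is that on $\partial D=\{r^2+z^2=d^2\}$ the $q$-correction exactly cancels the $-\tfrac{Wr^2}{2}\log\tfrac{1}{\beta}$ term and $\mathcal{K}\zeta=0$, so $\psi_{_{\beta,\Lambda}}\equiv-\mu_{_{\beta,\Lambda}}<0$ on the sphere; likewise $\mathcal{K}\zeta\to 0$ as $r\to 0$ or $|x|\to\infty$, so $\psi_{_{\beta,\Lambda}}<0$ on a neighbourhood of $\partial D$ and at infinity, which keeps the support in a compact subset of $D$. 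The main technical obstacle is the last point — verifying that the support also stays away from $\partial D_1\cap D$ so that $\zeta_{_{\beta,\Lambda}}$ is an interior critical point and not an artefact of the artificial cutoff — but for the present existence lemma one only needs compactness of the support in $D$, which is secured by the boundary behaviour of $\psi_{_{\beta,\Lambda}}$ just described; the issue of the $D_1$-cutoff being inactive will be revisited in the asymptotic analysis.
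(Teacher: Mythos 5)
Your proposal is correct and follows essentially the same route as the paper: the bounded artificial domain $D_1$ supplies the compactness (the paper phrases it as $K\in L^1(D_1\times D_1)$, via Lemma \ref{le2}, giving weak-star continuity of the quadratic term in $L^\infty(D_1)$, while you use weak $L^2$ convergence plus compactness of the restricted operator, which is equivalent since the kernel is integrable on $D_1\times D_1$), and the Euler--Lagrange/bathtub step, the measure-zero argument for $\{0<\zeta\le\alpha/\beta\}$ using $\mathcal{L}q=0$, the mass saturation, and the compact support via $\psi_{_{\beta,\Lambda}}=-\mu_{_{\beta,\Lambda}}$ on $\partial D$ are all handled exactly as in Lemma \ref{le3}, which is what the paper's short proof defers to. The one small imprecision is attributing the needed convergence to compactness of $\mathcal{K}$ on the whole exterior domain (citing Douglas), which is exactly what fails there without symmetrization; it is the restriction of supports to the bounded set $D_1$ that makes the quadratic form continuous along the maximizing sequence, and your own remarks about $D_1$ already contain the correct justification.
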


\begin{proof}
  By Lemma $\ref{le2}$, we deduce that $K(r,z,r',z')\in L^1(D_1\times D_1)$, which further implies that the functional $\int_D \zeta \mathcal{K} \zeta d\nu$ is weakly-star continuous in $L^\infty(D_1\times D_1)$. With this in hand, it is not hard to obtain the existence of $\zeta_\beta$. Since the rest of the proof is similar to that in Lemma $\ref{le3}$, we omit the details here and complete our proof
  of  Lemma $\ref{le13}$.
\end{proof}

 Arguing as in the proof of Lemma $\ref{le4}$, we can obtain the lower bound of the energy.
\begin{lemma}\label{le14}
  For any  $a\in(d,r_*+1)$, there exists $C>0$ such that for all $\beta$ sufficiently small, we have
\begin{equation*}
  E_\beta(\zeta_{_{\beta,\Lambda}})\ge (\frac{a}{4\pi}-\frac{Wa^2}{2}+\frac{Wd^3}{2a})\log{\frac{1}{\beta}}-C,
\end{equation*}
where the positive number $C$ depends on $a$, but not on $\beta$, $\Lambda$.
\end{lemma}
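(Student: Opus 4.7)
The plan is to mimic the proof of Lemma~\ref{le4} while carefully tracking the contribution of the new $q$-weighted term in $E_\beta$. First I would fix $a\in(d,r_*+1)$ and take the test function
\[
\tilde{\zeta}=\frac{1}{\beta^2}\chi_{B_{\beta/\sqrt{a\pi}}((a,0))}.
\]
Because $d<a<r_*+1$, the supporting ball lies inside $D_1$ for all small $\beta$, and a direct change of variables gives $\int_D\tilde{\zeta}\,d\nu=1+O(\beta^2)$; if needed I would shrink the radius slightly so that $\int_D\tilde\zeta\,d\nu\le 1$ holds exactly, leaving $\tilde{\zeta}\in\mathcal{A}_{\beta,\Lambda}(D_1)$. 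By the maximality of $\zeta_{_{\beta,\Lambda}}$ (Lemma~\ref{le13}), $E_\beta(\zeta_{_{\beta,\Lambda}})\ge E_\beta(\tilde{\zeta})$, and the task reduces to bounding $E_\beta(\tilde{\zeta})$ from below.

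Next I would estimate the four summands of $E_\beta(\tilde\zeta)$ separately. For the kinetic term $\tfrac{1}{2}\int\tilde\zeta\,\mathcal{K}\tilde\zeta\,d\nu$ I invoke the decomposition $K=G-H$ of Lemma~\ref{le1} with the expansion of $G$ in Lemma~\ref{le2}. Because $\tilde\zeta$ is supported in a ball of radius $O(\beta)$ centered at $(a,0)$ with $a>d$, the factors $\sqrt{rr'}$, $\log(1+\sqrt{\sigma^2+1})$, $f$ and $H$ are all uniformly bounded there, so only the term $\tfrac{1}{2\pi}\sqrt{rr'}\log(1/\sigma)$ contributes to leading order and yields
\[
\tfrac{1}{2}\int\tilde\zeta\,\mathcal{K}\tilde\zeta\,d\nu \ge \frac{a}{4\pi}\log\frac{1}{\beta}-C_1,
\]
exactly as in Lemma~\ref{le4}. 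Expanding $r^2=a^2+O(\beta)$ on the support gives $-\tfrac{W}{2}\log(1/\beta)\int r^2\tilde\zeta\,d\nu=-\tfrac{Wa^2}{2}\log(1/\beta)+O(\beta\log(1/\beta))$. The new term is handled the same way: since $a>d$, the weight $q(r,z)=r^2/(r^2+z^2)^{3/2}$ is smooth on $\mathrm{supp}(\tilde\zeta)$ with $q(a,0)=1/a$, so
\[
\tfrac{W}{2}\log\tfrac{1}{\beta}\int\tfrac{r^2 d^3}{(r^2+z^2)^{3/2}}\tilde\zeta\,d\nu = \tfrac{Wd^3}{2a}\log\tfrac{1}{\beta}+O(\beta\log\tfrac{1}{\beta}).
\]
Finally, the penalty $-\tfrac{\beta^2}{2}\int r^2(\tilde\zeta-\alpha/\beta)_+^2\,d\nu$ is $O(1)$, since $\tilde\zeta=1/\beta^2$ on a set of $m_2$-measure $O(\beta^2)$ on which $r$ is bounded.

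Summing the four estimates and absorbing the vanishing corrections $O(\beta\log(1/\beta))$ into a single constant $C=C(a)$ yields
\[
E_\beta(\tilde\zeta)\ge \Bigl(\tfrac{a}{4\pi}-\tfrac{Wa^2}{2}+\tfrac{Wd^3}{2a}\Bigr)\log\tfrac{1}{\beta}-C,
\]
which is precisely the claimed bound (note that $\tfrac{1}{2}\Gamma_2(a)=\tfrac{a}{4\pi}-\tfrac{Wa^2}{2}+\tfrac{Wd^3}{2a}$). The main technical nuisance is the kinetic-energy lower bound in the presence of the boundary correction $H$ coming from the ball obstacle; the key observation, which makes the exterior-domain case no worse than the cylinder case, is that the restriction $a>d$ forces $\tilde\zeta$ to concentrate strictly inside $D$, so $H$ remains bounded on $\mathrm{supp}(\tilde\zeta)\times\mathrm{supp}(\tilde\zeta)$ and does not affect the leading term.
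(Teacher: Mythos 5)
Your proposal is correct and follows essentially the same route as the paper, which proves this lemma verbatim ``arguing as in the proof of Lemma \ref{le4}'': same test function $\tilde\zeta=\beta^{-2}\chi_{B_{\beta/\sqrt{a\pi}}((a,0))}$, same use of Lemmas \ref{le1}--\ref{le2} for the kinetic term, with the only new ingredient being the smoothness of $q$ near $(a,0)$ giving the extra $\frac{Wd^3}{2a}\log\frac1\beta$ term. (Minor remark: $\int_D\tilde\zeta\,d\nu=1$ exactly, since $\int_{B_\delta((a,0))}r\,drdz=a\pi\delta^2$ by symmetry, so no shrinking of the radius is needed.)
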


We now turn to study the multiplier $\mu_{_{\beta,\Lambda}}$. The following lemma is a counterpart of Lemma \ref{ad1}.
\begin{lemma}\label{ad4}
For all sufficiently small $\beta$, there holds
\begin{equation*}
\begin{split}
   \mu_{_{\beta,\Lambda}} =&2E_\beta(\zeta_{_{\beta,\Lambda}})+\frac{W}{2}\log\frac{1}{\beta}\int_D\zeta_{_{\beta,\Lambda}} r^2 d\nu-\frac{Wd^3}{2}\log\frac{1}{\beta}\int_{D}q(r,z)\zeta_{_{\beta,\Lambda}}d\nu  \\
     & -\int_D{\alpha \beta r^2(\zeta_{_{\beta,\Lambda}}-\frac{\alpha}{\beta})_+}d\nu-\frac{\Lambda}{\beta^2}\int_D U_{\beta,\Lambda}d\nu,
\end{split}
\end{equation*}
where $U_{\beta,\Lambda}:=\big[\psi_{_{\beta,\Lambda}}-(\Lambda-\alpha\beta)r^2\big]_+$.
Moreover, one has
\begin{equation*}
  \mu_{_{\beta,\Lambda}}=2E_\beta(\zeta_{_{\beta,\Lambda}})+\frac{W}{2}\log\frac{1}{\beta}\int_D\zeta_{_{\beta,\Lambda}} r^2 d\nu-\frac{Wd^3}{2}\log\frac{1}{\beta}\int_{D}q(r,z)\zeta d\nu+O(1),
\end{equation*}
 where the bounded quantity $O(1)$ does not depend on $\beta$ and $\Lambda$.
\end{lemma}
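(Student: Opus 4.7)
The plan is to mirror the proof of Lemma~\ref{ad1} step by step, treating the extra harmonic term $\frac{Wr^2d^3}{2(r^2+z^2)^{3/2}}\log\frac{1}{\beta}$ in $\psi_{_{\beta,\Lambda}}$ as a passive spectator. The essential new point is that, since $\mathcal{L}(r^2)=0$ and $\mathcal{L}q=0$, this correction does not alter the identity $\mathcal{L}\psi_{_{\beta,\Lambda}}=\zeta_{_{\beta,\Lambda}}$ a.e.\ in $D$; it simply introduces the additional $-\frac{Wd^3}{2}\log\frac{1}{\beta}\int_D q\,\zeta_{_{\beta,\Lambda}}\,d\nu$ term on the right-hand side of the identity for $\mu_{_{\beta,\Lambda}}$.

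First, I would start from $2E_\beta(\zeta_{_{\beta,\Lambda}})$ and substitute the relation
\[
\mathcal{K}\zeta_{_{\beta,\Lambda}}=\psi_{_{\beta,\Lambda}}+\frac{Wr^2}{2}\log\frac{1}{\beta}-\frac{Wr^2d^3}{2(r^2+z^2)^{3/2}}\log\frac{1}{\beta}+\mu_{_{\beta,\Lambda}}
\]
into the quadratic part. Next, using the explicit profile \eqref{401}, I split $\int_D\zeta_{_{\beta,\Lambda}}\psi_{_{\beta,\Lambda}}\,d\nu$ over the two sets $D_1\cap\{0<\psi_{_{\beta,\Lambda}}<(\Lambda-\alpha\beta)r^2\}$ and $D_1\cap\{\psi_{_{\beta,\Lambda}}\ge(\Lambda-\alpha\beta)r^2\}$; on the first piece $\psi_{_{\beta,\Lambda}}=r^2\beta^2(\zeta_{_{\beta,\Lambda}}-\alpha/\beta)_+$, while on the second $\zeta_{_{\beta,\Lambda}}=\Lambda/\beta^2$ and $\psi_{_{\beta,\Lambda}}=(\Lambda-\alpha\beta)r^2+U_{\beta,\Lambda}$. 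A direct algebraic manipulation produces
\[
\int_D\zeta_{_{\beta,\Lambda}}\psi_{_{\beta,\Lambda}}\,d\nu=\int_D r^2\beta^2(\zeta_{_{\beta,\Lambda}}-\alpha/\beta)^2_+\,d\nu+\int_D\alpha\beta r^2(\zeta_{_{\beta,\Lambda}}-\alpha/\beta)_+\,d\nu+\frac{\Lambda}{\beta^2}\int_D U_{\beta,\Lambda}\,d\nu,
\]
where the last integral is effectively restricted to $D_1$ because, for small $\beta$, the combined term $-\frac{Wr^2}{2}\log\frac{1}{\beta}\bigl(1-d^3/(r^2+z^2)^{3/2}\bigr)$ drives $\psi_{_{\beta,\Lambda}}$ strongly negative outside $D_1$. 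Rearranging and combining with Lemma~\ref{le14} (which ensures $E_\beta(\zeta_{_{\beta,\Lambda}})>0$ and hence, via the bathtub-type variational argument of Lemma~\ref{le3}, $\int_D\zeta_{_{\beta,\Lambda}}\,d\nu=1$ and $\mu_{_{\beta,\Lambda}}>0$) yields the first identity.

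For the $O(1)$ refinement, the term $\int_D\alpha\beta r^2(\zeta_{_{\beta,\Lambda}}-\alpha/\beta)_+\,d\nu$ is bounded by $\alpha\beta(r_*+1)^2$ using $\mathrm{supp}(\zeta_{_{\beta,\Lambda}})\subseteq D_1$. The uniform control of $\frac{\Lambda}{\beta^2}\int_D U_{\beta,\Lambda}\,d\nu$ is the main technical point and proceeds exactly as in Lemma~\ref{ad1}: on $\partial D=\{r^2+z^2=d^2\}$ the two harmonic $r^2$ pieces cancel, so $\psi_{_{\beta,\Lambda}}=-\mu_{_{\beta,\Lambda}}<0$ there, and together with the strong decay noted above this gives $U_{\beta,\Lambda}\in H(D)$ with compact support. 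Testing $\mathcal{L}\psi_{_{\beta,\Lambda}}=\zeta_{_{\beta,\Lambda}}$ with $U_{\beta,\Lambda}$ yields
\[
\int_D\frac{|\nabla U_{\beta,\Lambda}|^2}{r^2}\,d\nu=\frac{\Lambda}{\beta^2}\int_D U_{\beta,\Lambda}\,d\nu,
\]
and then H\"older's inequality, Sobolev's inequality and the measure bound $|\{U_{\beta,\Lambda}>0\}|\le C\beta^2/\Lambda$ (which follows from $\zeta_{_{\beta,\Lambda}}\le\Lambda/\beta^2$ and $\int_D\zeta_{_{\beta,\Lambda}}\,d\nu=1$) deliver the desired uniform $O(1)$ bound independent of $\beta$ and $\Lambda$. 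The principal obstacle in the exterior setting is precisely confirming that $\{U_{\beta,\Lambda}>0\}$ is compactly contained in $D$ so that $U_{\beta,\Lambda}$ is an admissible test function in $H(D)$; once this reduction is made, the argument transposes verbatim from the cylindrical case.
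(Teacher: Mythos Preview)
Your algebraic derivation of the identity and the Sobolev/H\"older chain for the $O(1)$ bound are both correct in outline, but there is a genuine circularity in the way you establish that $U_{\beta,\Lambda}$ is an admissible test function in $H(D)$. You assert that on $\partial D=\{r^2+z^2=d^2\}$ one has $\psi_{_{\beta,\Lambda}}=-\mu_{_{\beta,\Lambda}}<0$, invoking Lemma~\ref{le14} and ``the bathtub-type variational argument of Lemma~\ref{le3}'' to get $\mu_{_{\beta,\Lambda}}>0$. However, in the exterior setting Lemma~\ref{le13} only yields $\mu_{_{\beta,\Lambda}}\in\mathbb{R}$, not $\mu_{_{\beta,\Lambda}}\ge 0$: the bathtub is taken over the \emph{bounded} set $D_1$, so the level sets no longer have infinite $\nu$-measure for negative levels. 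The route ``$E_\beta>0\Rightarrow\mu_{_{\beta,\Lambda}}>0$'' via the identity itself also fails, because the lower bound $\mu_{_{\beta,\Lambda}}\int_D\zeta_{_{\beta,\Lambda}}\,d\nu\ge 2E_\beta-\int_D\alpha\beta r^2(\zeta_{_{\beta,\Lambda}}-\alpha/\beta)_+\,d\nu-\frac{\Lambda}{\beta^2}\int_D U_{\beta,\Lambda}\,d\nu$ already requires the uniform control of $\frac{\Lambda}{\beta^2}\int_D U_{\beta,\Lambda}\,d\nu$, which is precisely what you are trying to prove using $\mu_{_{\beta,\Lambda}}>0$.

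The paper breaks this loop by first proving directly that $\mu_{_{\beta,\Lambda}}>-\tfrac12$ for small $\beta$: if $\mu_{_{\beta,\Lambda}}<0$ then, using $\mathcal{K}\zeta_{_{\beta,\Lambda}}\ge 0$ and $r^2-d^3q(r,z)\le r^2$, the set $\{(r,z)\in D_1:\tfrac{Wr^2}{2}\log\tfrac{1}{\beta}<|\mu_{_{\beta,\Lambda}}|\}$ is contained in $\{\zeta_{_{\beta,\Lambda}}>\alpha/\beta\}$, whose $\nu$-measure is controlled by $\int_D\zeta_{_{\beta,\Lambda}}\,d\nu\le 1$; this forces $|\mu_{_{\beta,\Lambda}}|\le C\beta\log\tfrac{1}{\beta}$. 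With only $\mu_{_{\beta,\Lambda}}>-\tfrac12$ in hand, one cannot use $U_{\beta,\Lambda}$ itself (it may be positive on $\partial D$), so the paper instead tests with
\[
\tilde U_{\beta,\Lambda}:=\bigl[\psi_{_{\beta,\Lambda}}-(\Lambda-\alpha\beta)r^2-1\bigr]_+,
\]
which does vanish on $\partial D$ since there $\psi_{_{\beta,\Lambda}}=-\mu_{_{\beta,\Lambda}}<\tfrac12<1$. The Sobolev/H\"older argument then bounds $\frac{\Lambda}{\beta^2}\int_D\tilde U_{\beta,\Lambda}\,d\nu$, and the elementary inequality $U_{\beta,\Lambda}\le\tilde U_{\beta,\Lambda}+1$ on $\{U_{\beta,\Lambda}>0\}$ together with $|\{U_{\beta,\Lambda}>0\}|\le\beta^2/\Lambda$ transfers this to $\frac{\Lambda}{\beta^2}\int_D U_{\beta,\Lambda}\,d\nu$. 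Only after this step does one conclude $\mu_{_{\beta,\Lambda}}>0$ and $\int_D\zeta_{_{\beta,\Lambda}}\,d\nu=1$. Your proposal needs this extra ingredient.
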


\begin{proof}
We first claim that $\mu_{_{\beta,\Lambda}}>-1/2$ when $\beta$ is small enough. Indeed, if $\mu_{_{\beta,\Lambda}}<0$, then
\begin{equation*}
  \{(r,z)\in D_1~|~\frac{Wr^2}{2}\log\frac{1}{\beta}<|\mu_{_{\beta,\Lambda}}|\}\subseteq \{\zeta_{_{\beta,\Lambda}}>\frac{\alpha}{\beta}\}
\end{equation*}
Since $|\{\zeta_{_{\beta,\Lambda}}>\frac{\alpha}{\beta}\}|\le \beta^2/\Lambda$, we obtain $|\mu_{_{\beta,\Lambda}}|\le \frac{W\beta^2}{2\Lambda}\log\frac{1}{\beta}$, which clearly implies the claim. Set
\begin{equation*}
  \tilde{U}_{{\beta,\Lambda}}=\big[\psi_{_{\beta,\Lambda}}-(\Lambda-\alpha\beta)r^2-1\big]_+.
\end{equation*}
Now we can argue as in the proof of \eqref{0-3}, with ${U}_{{\beta,\Lambda}}$ replaced by $\tilde{U}_{{\beta,\Lambda}}$, to obtain
\begin{equation*}
  \frac{\Lambda}{\beta^2}\int_D U_{\beta,\Lambda}d\nu \le \frac{\Lambda}{\beta^2}\int_D \tilde{U}_{\beta,\Lambda}d\nu+1\le C,
\end{equation*}
where the positive number $C$ does not depend on $\beta$ and $\Lambda$. The rest of proof is the same as before, we omit it here. The proof is completed.
\end{proof}

Combining Lemmas \ref{le14} and \ref{ad4}, we immediately obtain:
\begin{lemma}\label{le15}
  For any  $a\in(d,r_*+1)$, there exists $C>0$ such that for all $\beta$ sufficiently small, we have
 \begin{equation}\label{227}
    \mu_{_{\beta,\Lambda}}\ge(\frac{a}{2\pi}-{Wa^2}+\frac{Wd^3}{a})\log{\frac{1}{\beta}}+\frac{W}{2}\log{\frac{1}{\beta}}\int_D\zeta_{\beta} r^2d\nu-\frac{Wd^3}{2}\log\frac{1}{\beta}\int_{D}q(r,z)\zeta d\nu-C,
 \end{equation}
 where the positive number $C$ depends on $a$, but not on $\beta$, $\Lambda$.
\end{lemma}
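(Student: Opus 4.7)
The plan is to obtain Lemma \ref{le15} as an immediate combination of the two preceding results, in direct parallel with how Lemma \ref{le5} was deduced from Lemmas \ref{le4} and \ref{ad1} in the cylinder case. The key observation is that Lemma \ref{ad4} already provides an \emph{exact} (up to $O(1)$) expression for $\mu_{_{\beta,\Lambda}}$ in terms of $E_\beta(\zeta_{_{\beta,\Lambda}})$ together with the two linear moments $\int_D\zeta_{_{\beta,\Lambda}} r^2\,d\nu$ and $\int_D q(r,z)\zeta_{_{\beta,\Lambda}}\,d\nu$, while Lemma \ref{le14} supplies a lower bound for the energy of the precise shape $\frac{1}{2}\Gamma_2(a)\log\frac{1}{\beta}-C$. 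So the only work is to substitute one into the other.

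Concretely, I would start from the identity
\begin{equation*}
 \mu_{_{\beta,\Lambda}}=2E_\beta(\zeta_{_{\beta,\Lambda}})+\frac{W}{2}\log\frac{1}{\beta}\int_D\zeta_{_{\beta,\Lambda}} r^2 \,d\nu-\frac{Wd^3}{2}\log\frac{1}{\beta}\int_{D}q(r,z)\zeta_{_{\beta,\Lambda}}\,d\nu+O(1),
\end{equation*}
valid for all sufficiently small $\beta$ by Lemma \ref{ad4}, with the bounded remainder independent of $\beta$ and $\Lambda$. Replacing $2E_\beta(\zeta_{_{\beta,\Lambda}})$ by the lower bound
\begin{equation*}
 2E_\beta(\zeta_{_{\beta,\Lambda}}) \ge \Bigl(\tfrac{a}{2\pi}-Wa^2+\tfrac{Wd^3}{a}\Bigr)\log\tfrac{1}{\beta}-2C
\end{equation*}
furnished by Lemma \ref{le14} (for any fixed $a\in(d,r_*+1)$) yields exactly the claimed inequality \eqref{227}, after absorbing the two additive constants into a single constant $C$ depending on $a$ but not on $\beta$ or $\Lambda$.

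Since every ingredient has already been established, there is essentially no obstacle here; the only thing to check is the bookkeeping of signs and the observation that the $O(1)$ term in Lemma \ref{ad4} and the constant in Lemma \ref{le14} are both uniform in $\beta$ and $\Lambda$, so their sum may be absorbed into a single $C=C(a)$. I would conclude with a one-line remark that the permissible range $a\in(d,r_*+1)$ comes from the fact that test functions in Lemma \ref{le14} must be supported in $D_1$, which distinguishes this case from the cylinder result where $a$ ranged over $(0,d)$.
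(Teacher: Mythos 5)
Your proposal is correct and is exactly the paper's argument: the authors obtain Lemma \ref{le15} by combining the multiplier identity of Lemma \ref{ad4} with the energy lower bound of Lemma \ref{le14} and absorbing the uniform constants, just as you do. Nothing further is needed.
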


We now study the asymptotic behaviour of the vortex core. Define
\[A_{\beta}=\inf\{r_\beta~|~(r_\beta,z_\beta)\in supp(\zeta_{_{\beta,\Lambda}})\ \text{for some}\  z_{\beta} \in \mathbb{R}\},\]
\[B_{\beta}=\sup\{r_\beta~|~(r_\beta,z_\beta)\in supp(\zeta_{_{\beta,\Lambda}})\ \text{for some}\  z_{\beta} \in \mathbb{R} \}.\]
We may assume that $(A_\beta,Z_{\beta1})$, $(B_\beta,Z_{\beta2})\in supp(\zeta_{_{\beta,\Lambda}})$ for some $Z_{\beta1}$, $Z_{\beta2} \in \mathbb{R}$.

\begin{lemma}\label{le16}
  $\lim_{\beta \to 0^+}A_\beta=r_*$.
\end{lemma}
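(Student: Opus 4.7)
My plan is to parallel the proof of Lemma~\ref{le6}, with extra care for the new $q$-term and for the possibility that $A_\beta$ drifts below $d$. First, pick $(A_\beta, Z_{\beta 1}) \in \mathrm{supp}(\zeta_{_{\beta,\Lambda}})$; the relation $\psi_{_{\beta,\Lambda}}(A_\beta, Z_{\beta 1}) \ge 0$ then gives the pointwise lower bound
\[
\mathcal{K}\zeta_{_{\beta,\Lambda}}(A_\beta, Z_{\beta 1}) \ge \Bigl(\tfrac{WA_\beta^2}{2} - \tfrac{WA_\beta^2 d^3}{2(A_\beta^2 + Z_{\beta 1}^2)^{3/2}}\Bigr)\log\tfrac{1}{\beta} + \mu_{_{\beta,\Lambda}}.
\]
For the matching upper bound I would split $\mathcal{K}\zeta_{_{\beta,\Lambda}}(A_\beta, Z_{\beta 1})$ into near-field ($\sigma \le \beta^\gamma$) and far-field ($\sigma > \beta^\gamma$) pieces via Lemmas~\ref{le1}--\ref{le2}, exactly as in the estimates (212)--(213), obtaining a bound of the form $\tfrac{A_\beta}{2\pi}\log\tfrac{1}{\beta}\int_{B}\zeta_{_{\beta,\Lambda}}\,d\nu + O(\gamma\log\tfrac{1}{\beta}) + O(1)$, where $B$ is a ball of radius $O(\beta^\gamma)$ around $(A_\beta,Z_{\beta 1})$.

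Second, I would feed in the lower bound for $\mu_{_{\beta,\Lambda}}$ from Lemma~\ref{le15} with any fixed $a \in (d, r_* + 1)$ and apply the two a priori estimates
\[
\int_D \zeta_{_{\beta,\Lambda}}\, r^2 \, d\nu \ge A_\beta^2, \qquad \int_D q\,\zeta_{_{\beta,\Lambda}}\,d\nu \le \frac{1}{A_\beta},
\]
which follow from $r \ge A_\beta$ on $\mathrm{supp}(\zeta_{_{\beta,\Lambda}})$ and the pointwise bound $q(r,z) \le 1/r$. Dividing through by $\log(1/\beta)$, using $\int_B \zeta_{_{\beta,\Lambda}}\,d\nu \le 1$, and simplifying, everything should collapse cleanly: the combination $-\tfrac{W}{2}\!\int\!\zeta r^2\,d\nu + \tfrac{Wd^3}{2}\!\int\!q\zeta\,d\nu$ is dominated by $-\tfrac{WA_\beta^2}{2} + \tfrac{Wd^3}{2A_\beta}$, which together with the geometric terms reassembles into $\Gamma_2(A_\beta)$, yielding
\[
\Gamma_2(A_\beta) \ge \Gamma_2(a) + \tfrac{Wd^3}{2}\Bigl(\tfrac{1}{A_\beta} - q(A_\beta, Z_{\beta 1})\Bigr) - O(\gamma) - o_\beta(1).
\]
Since $q(A_\beta, Z_{\beta 1}) \le 1/A_\beta$ the middle correction is nonnegative, and letting $\beta \to 0^+$, then $a \to r_*$ and $\gamma \to 0$, we obtain $\liminf_{\beta \to 0^+} \Gamma_2(A_\beta) \ge \Gamma_2(r_*)$.

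Finally, I would convert this lower-limit into $A_\beta \to r_*$. Since $r_*$ is the unique maximizer of $\Gamma_2$ on $[d, +\infty)$ and $\Gamma_2$ is continuous, it suffices to know $A_\beta \ge d$ eventually. This is the main obstacle: $\Gamma_2(t) \to +\infty$ as $t \to 0^+$, so the superlevel set $\{\Gamma_2 \ge \Gamma_2(r_*)\}$ also contains an interval $(0, r_{\mathrm{low}}]$ with $r_{\mathrm{low}} < d$, and one must separately rule out $A_\beta$ drifting there. I would do this using the Green's function: for small $A_\beta$ the $\sqrt{rr'}$ factor in $G$ forces $\mathcal{K}\zeta_{_{\beta,\Lambda}}(A_\beta, Z_{\beta 1}) = O(\sqrt{A_\beta}\log(1/\beta))$, whereas the pointwise lower bound gives $\mathcal{K}\zeta_{_{\beta,\Lambda}}(A_\beta, Z_{\beta 1}) \ge \mu_{_{\beta,\Lambda}} \ge c\log(1/\beta)$ for some definite $c > 0$ (using $W < 1/(6\pi d)$ via the cruder bound $\int q\zeta\,d\nu \le 1/d$ in Lemma~\ref{le15}). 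This comparison forces $A_\beta \ge c' > 0$ for $\beta$ small; refined, it can be pushed to $A_\beta \ge d - \epsilon$ for arbitrary $\epsilon > 0$. Once $A_\beta \ge d$ eventually, $\Gamma_2(A_\beta) \le \Gamma_2(r_*)$ combined with the $\liminf$ estimate forces $\Gamma_2(A_\beta) \to \Gamma_2(r_*)$, and uniqueness of the maximum gives $A_\beta \to r_*$.
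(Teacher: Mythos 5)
Your main estimate is exactly the paper's: evaluating at $(A_\beta,Z_{\beta1})$, bounding $\mathcal{K}\zeta_{_{\beta,\Lambda}}$ above by the near/far-field split of Lemma \ref{le6} and below via $\psi_{_{\beta,\Lambda}}\ge 0$ together with Lemma \ref{le15}, then using $\int_D\zeta_{_{\beta,\Lambda}}r^2d\nu\ge A_\beta^2$, $q(r,z)\le 1/r$ and $\int_D q\zeta_{_{\beta,\Lambda}}d\nu\le 1/A_\beta$, is precisely how \eqref{228} is specialized in the paper (whose line ``$q(r,z)\ge q(r,0)$'' is a typo for $\le$), and it yields $\liminf_{\beta\to0^+}\Gamma_2(A_\beta)\ge\Gamma_2(r_*)$ just as in the text. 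You are also right, and more careful than the paper, in flagging that this inequality alone does not give $A_\beta\to r_*$, since $\Gamma_2\to+\infty$ as $t\to0^+$ and $D_1$ does contain points with $r<d$ (above and below the ball); the paper jumps from the $\liminf$ inequality to the conclusion without comment.

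The gap is in your patch for that issue. The comparison ``$\mathcal{K}\zeta_{_{\beta,\Lambda}}(A_\beta,Z_{\beta1})=O(\sqrt{A_\beta}\log\frac1\beta)$ versus $\mu_{_{\beta,\Lambda}}\ge c\log\frac1\beta$'' only gives $A_\beta\ge c'$ for some uncontrolled constant $c'=(c/C)^2$, and the decisive claim that this ``can be pushed to $A_\beta\ge d-\epsilon$ for arbitrary $\epsilon$'' is asserted, not proved; the $\sqrt{rr'}$ bound \eqref{Tadiewrong} is genuinely lossy and no refinement of that particular comparison yields $d-\epsilon$. The point matters quantitatively: the crossing point $t_0<d$ with $\Gamma_2(t_0)=\Gamma_2(r_*)$ can be arbitrarily close to $d$ as $W\uparrow 1/(6\pi d)$ (at the threshold one checks $\Gamma_2(5d/6)>\Gamma_2(d)$), so neither a fixed positive lower bound nor even $A_\beta\gtrsim 5d/6$ suffices. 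A correct completion uses ingredients you already have, assembled differently: keep the sharp upper bound $\mathcal{K}\zeta_{_{\beta,\Lambda}}(A_\beta,Z_{\beta1})\le\bigl(\frac{A_\beta}{2\pi}+C\gamma\bigr)\log\frac1\beta+C$ from \eqref{212}--\eqref{213}, and in the lower bound use the geometric facts $q(A_\beta,Z_{\beta1})\le A_\beta^2/d^3$ (valid because $A_\beta^2+Z_{\beta1}^2\ge d^2$), $\int_D q\zeta_{_{\beta,\Lambda}}d\nu\le 1/d$ (since $q\le 1/d$ on $D$), and $\int_D\zeta_{_{\beta,\Lambda}}r^2d\nu\ge A_\beta^2$. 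Dividing by $\log\frac1\beta$ then gives $\frac{A_\beta}{2\pi}-\frac{WA_\beta^2}{2}\ge\Gamma_2(a)-\frac{Wd^2}{2}-C\gamma-o(1)\ge\frac{d}{2\pi}-\frac{Wd^2}{2}-C\gamma-o(1)$, i.e.\ $\phi(A_\beta)\ge\phi(d)-C\gamma-o(1)$ with $\phi(t)=\frac{t}{2\pi}-\frac{Wt^2}{2}$; since $\phi$ is strictly increasing on $[0,\frac{1}{2\pi W}]\supseteq[0,3d]$ with $\phi'\ge\frac{1}{3\pi}$ on $[0,d]$, this forces $\liminf_{\beta\to0^+}A_\beta\ge d$, after which your $\liminf\Gamma_2(A_\beta)\ge\Gamma_2(r_*)$ and the strict monotonicity of $\Gamma_2$ on $[d,r_*]$ give $A_\beta\to r_*$. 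With this step supplied, your argument is complete and in fact repairs the step the paper leaves implicit.
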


\begin{proof}
  Let $\sigma$ be defined by $\eqref{201}$ and $\gamma\in(0,1)$. Let $(r_\beta,z_\beta)\in {supp}\,(\zeta_{_{\beta,\Lambda}})$, arguing as in the proof of Lemma $\ref{le6}$, we may obtain that for some positive number $C_0$ not depending on $\beta$, $\Lambda$ and $\gamma$,
\begin{equation*}
 \mathcal{K}\zeta_{_{\beta,\Lambda}}(r_\beta,z_\beta)\le \frac{r_\beta}{2\pi}\log\frac{1}{\beta}\int\limits_{B_{C_0\beta^\gamma}\big((r_\beta,z_\beta)\big)}\zeta_{_{\beta,\Lambda}} d\nu+C\sinh^{-1}(\frac{1}{\beta^\gamma})+C.
\end{equation*}
On the other hand, for any $a\in(1,r_*+1)$, we have
\begin{equation*}
\begin{split}
   \mathcal{K}\zeta_{_{\beta,\Lambda}}&(r_\beta,z_\beta)-\frac{W(r_\beta)^2}{2}\log\frac{1}{\beta}+\frac{Wd^3}{2}\log \frac{1}{\beta}\,g(r_\beta,z_\beta) \\
    & \ge (\frac{a}{2\pi}-{Wa^2}+\frac{Wd^3}{a})\log{\frac{1}{\beta}}+\frac{W}{2}\log{\frac{1}{\beta}}\int_D\zeta_{_{\beta,\Lambda}} r^2d\nu-\frac{Wd^3}{2}\log\frac{1}{\beta}\int_{D}q(r,z)\zeta_{_{\beta,\Lambda}} d\nu-C.
\end{split}
\end{equation*}
Hence
\begin{equation*}
\begin{split}
   \frac{r_\beta}{2\pi}\log\frac{1}{\beta}+C\sinh^{-1}&(\frac{1}{\beta^\gamma})-\frac{W(r_\beta)^2}{2}\log\frac{1}{\beta}+\frac{Wd^3}{2}\log \frac{1}{\beta}\,q(r_\beta,z_\beta)\ge  \\
     & \Gamma_2(a)\log{\frac{1}{\beta}}+\frac{W}{2}\log{\frac{1}{\beta}}\int_D\zeta_{_{\beta,\Lambda}} r^2d\nu-\frac{Wd^3}{2}\log\frac{1}{\beta}\int_{D}q(r,z)\zeta_{_{\beta,\Lambda}} d\nu-C.
\end{split}
\end{equation*}
Dividing both sides of the above inequality by $\log\frac{1}{\beta}$, we get
\begin{equation}\label{228}
\begin{split}
   \Gamma_2(a)\le ~&\Gamma_2(r_\beta)+\frac{W}{2}\big{\{}(r_\beta)^2-\int_D\zeta_{_{\beta,\Lambda}} r^2d\nu \big{\}}  +\frac{Wd^3}{2}\big{\{}q(r_\beta,z_\beta)+\int_Dq(r,z)\zeta_{_{\beta,\Lambda}}d\nu -\frac{2}{r_\beta}\big{\}} \\
     &  +\frac{C\sinh^{-1}(\frac{1}{\beta^{\gamma}})}{\log\frac{1}{\beta}}+\frac{C}{\log\frac{1}{\beta}}.
\end{split}
\end{equation}
Note that $q(r,z)\ge q(r,0)=1/r$. Taking $(r_\beta,z_\beta)=(A_\beta,Z_{\beta1})$, we obtain
\begin{equation*}
  \Gamma_2(a)\le \Gamma_2(A_\beta)+\frac{C\sinh^{-1}(\frac{1}{\beta^{\gamma}})}{\log\frac{1}{\beta}}+\frac{C}{\log\frac{1}{\beta}}.
\end{equation*}
Letting $\beta \to 0^+$, it follows that
\begin{equation*}
  \Gamma_2(a)\le \liminf_{\beta\to 0^+}\Gamma_2(A_\beta)+C\gamma.
\end{equation*}
Letting $a\to r_*$ and $\gamma \to 0^+$, we deduce that
\begin{equation*}
  \Gamma_2(r_*)\le \liminf_{\beta\to 0^+}\Gamma_2(A_\beta),
\end{equation*}
which implies the desired result. The proof is completed.
\end{proof}

Arguing as before, one can easily obtain
\begin{lemma}\label{le17}
There holds
\begin{equation*}
\begin{split}
   \lim_{\beta\to 0^+}\int_D\zeta_{_{\beta,\Lambda}} r^2d\nu &=r_*^2,  \\
   \lim_{\beta\to 0^+}\int_D q(r,z)\zeta_{_{\beta,\Lambda}} d\nu &=\frac{1}{r_*},\\
   \lim_{\beta\to 0^+}B_\beta&=r_*.
\end{split}
\end{equation*}
\end{lemma}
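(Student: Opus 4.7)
My plan is to follow the pattern of Lemmas \ref{le7}--\ref{le8} from the cylinder case, with the additional harmonic-correction term $Wd^3 q(r,z)/2 \log(1/\beta)$ handled via the pointwise bound $q(r,z) = r^2/(r^2+z^2)^{3/2} \le 1/r$. All three limits should fall out of inequality \eqref{228}, the estimate already driving Lemma \ref{le16}.

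For the first two limits, I would apply \eqref{228} at $(r_\beta, z_\beta) = (A_\beta, Z_{\beta 1})$, the leftmost support point. The crucial observation is that both correction terms $A_\beta^2 - \int r^2 \zeta_{_{\beta,\Lambda}} d\nu$ and $\int q\,\zeta_{_{\beta,\Lambda}} d\nu - 1/A_\beta$ are non-positive: the first because $r \ge A_\beta$ on the support and $\int \zeta_{_{\beta,\Lambda}} d\nu = 1$; the second because $q(r,z) \le 1/r \le 1/A_\beta$ on the support and in particular at $(A_\beta, Z_{\beta 1})$. Setting $a = r_*$ in \eqref{228}, invoking $A_\beta \to r_*$ from Lemma \ref{le16} and continuity of $\Gamma_2$, the leading terms cancel and one is reduced to
\[
0 \;\le\; \tfrac{W}{2}\bigl[\textstyle\int r^2 \zeta_{_{\beta,\Lambda}}d\nu - A_\beta^2\bigr] + \tfrac{Wd^3}{2}\bigl[1/A_\beta - \int q\,\zeta_{_{\beta,\Lambda}}d\nu\bigr] \;\le\; C\gamma + o(1).
\]
Both bracketed quantities are non-negative; since $\gamma \in (0,1)$ is arbitrary, each must tend to $0$. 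Combined with $A_\beta \to r_*$, this yields $\int r^2 \zeta_{_{\beta,\Lambda}}d\nu \to r_*^2$ and $\int q\,\zeta_{_{\beta,\Lambda}}d\nu \to 1/r_*$.

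For $B_\beta \to r_*$, the lower bound $\liminf B_\beta \ge r_*$ is immediate from $\int r^2 \zeta_{_{\beta,\Lambda}}d\nu \le B_\beta^2$ together with the previous step. For the matching upper bound, I would apply \eqref{228} at $(r_\beta, z_\beta) = (B_\beta, Z_{\beta 2})$, substitute the newly proved limits, use $q(B_\beta, Z_{\beta 2}) \le 1/B_\beta$, and, after passing to a subsequence $B_\beta \to L$ and sending $\gamma \to 0$, reduce the inequality to
\[
\tfrac{L}{2\pi}\liminf_{\beta\to 0^+}\!\!\int_{B_{C_0\beta^\gamma}((B_\beta, Z_{\beta 2}))}\!\! \zeta_{_{\beta,\Lambda}}d\nu \;\ge\; \tfrac{r_*}{2\pi} + \tfrac{W}{2}(L^2 - r_*^2) + \tfrac{Wd^3}{2}\bigl(1/r_* - 1/L\bigr).
\]
If $L > r_*$, both of the last two terms on the right are strictly positive, so a definite mass is forced into a shrinking ball around $(B_\beta, Z_{\beta 2})$. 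On the other hand, the squeeze in the previous step (together with $r \ge A_\beta$ on the support and $\int (r^2 - r_*^2)\zeta_{_{\beta,\Lambda}} \to 0$) gives $\int_{\{r \ge L''\}}\zeta_{_{\beta,\Lambda}}d\nu \to 0$ for any $L'' \in (r_*, L)$; since the shrinking ball eventually sits inside $\{r \ge L''\}$, this is a contradiction, and therefore $L \le r_*$.

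The routine pieces follow the cylinder analysis verbatim; the only genuinely new point — and the step I expect to require the most care — is that the extra $q$-contribution in \eqref{228} must appear with the \emph{same} sign as the $r^2$-contribution at $(A_\beta, Z_{\beta 1})$, so that the first-step squeeze produces two non-negative terms both forced to vanish. This sign-coherence is precisely what the inequality $q(r,z) \le 1/r$ delivers, and once it is secured the concentration-versus-diffusion contradiction at $(B_\beta, Z_{\beta 2})$ closes exactly as in Lemma \ref{le8}.
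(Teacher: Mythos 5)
Your proposal is correct and follows essentially the route the paper intends: it fills in the omitted "arguing as before" by adapting the cylinder-case pattern (Lemmas \ref{le7}--\ref{le8}), squeezing the two non-negative correction terms in \eqref{228} at $(A_\beta,Z_{\beta1})$ and then ruling out $\limsup B_\beta>r_*$ by the local-mass concentration versus vanishing-mass-in-$\{r\ge L''\}$ contradiction. Your sign observation $q(r,z)\le 1/r$ is indeed the one actually needed (the paper's line ``$q(r,z)\ge q(r,0)=1/r$'' in the proof of Lemma \ref{le16} is a typo), so no changes are required.
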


We may further prove that the vortex core will shrink to the circle $\mathcal{C}_{r_*}$.
\begin{lemma}\label{le18}
$\lim_{\beta \to 0^+}dist_{\mathcal{C}_{r_{_*}}}\big(supp(\zeta_{_{\beta,\Lambda}})\big)=0.$
\end{lemma}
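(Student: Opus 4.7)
The plan is to argue by contradiction using the pointwise inequality \eqref{228} together with Lemmas \ref{le16} and \ref{le17}. Suppose the conclusion fails: then there exist $\eta>0$, a sequence $\beta_n\to 0^+$, and points $(r_n,z_n)\in {supp}(\zeta_{_{\beta_n,\Lambda}})$ with
\begin{equation*}
(r_n-r_*)^2+z_n^2\ge \eta^2.
\end{equation*}
Since $A_{\beta_n}\le r_n\le B_{\beta_n}$ and both $A_{\beta_n},B_{\beta_n}\to r_*$ by Lemmas \ref{le16} and \ref{le17}, we have $r_n\to r_*$, hence $z_n^2\ge \eta^2/2$ for large $n$. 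Because ${supp}(\zeta_{_{\beta_n,\Lambda}})\subseteq D_1$ is bounded, we may pass to a subsequence so that $z_n\to z_\infty$ with $|z_\infty|\ge \eta/\sqrt{2}>0$.

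Next, I would apply \eqref{228} at the points $(r_\beta,z_\beta)=(r_n,z_n)$ with $a=r_*$ and a fixed $\gamma\in(0,1)$, getting
\begin{equation*}
\begin{split}
\Gamma_2(r_*)\le ~& \Gamma_2(r_n)+\tfrac{W}{2}\bigl(r_n^2-\textstyle\int_D r^2\zeta_{_{\beta_n,\Lambda}}d\nu\bigr)\\
& +\tfrac{Wd^3}{2}\bigl(q(r_n,z_n)+\textstyle\int_D q(r,z)\zeta_{_{\beta_n,\Lambda}}d\nu-\tfrac{2}{r_n}\bigr)+o(1)+O(\gamma),
\end{split}
\end{equation*}
where I used $\sinh^{-1}(\beta_n^{-\gamma})/\log(1/\beta_n)\to \gamma$ to absorb the first error term. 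Letting $n\to\infty$ and invoking Lemma \ref{le17} for $\int r^2\zeta_{_{\beta_n,\Lambda}}d\nu\to r_*^2$ and $\int q(r,z)\zeta_{_{\beta_n,\Lambda}}d\nu\to 1/r_*$, the right-hand side tends to
\begin{equation*}
\Gamma_2(r_*)+\tfrac{Wd^3}{2}\bigl(q(r_*,z_\infty)-\tfrac{1}{r_*}\bigr)+O(\gamma).
\end{equation*}
Since $\gamma\in(0,1)$ is arbitrary, I would then let $\gamma\to 0^+$, which forces $q(r_*,z_\infty)\ge 1/r_*$.

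This yields the contradiction: the function $z\mapsto q(r_*,z)=r_*^2/(r_*^2+z^2)^{3/2}$ attains its strict maximum $1/r_*$ only at $z=0$, so $|z_\infty|>0$ gives $q(r_*,z_\infty)<1/r_*$. Hence the assumption fails, and $\lim_{\beta\to 0^+}dist_{\mathcal{C}_{r_*}}\bigl({supp}(\zeta_{_{\beta,\Lambda}})\bigr)=0$. The main subtlety is simply that the estimate \eqref{228} is pointwise in $(r_\beta,z_\beta)$, which is exactly what is needed to extract a uniform statement about the support via a subsequence argument; the role of the term $\int q\,\zeta\,d\nu$, already shown to converge to $1/r_*$, is precisely to pin the $z$-coordinate of any accumulation point to $0$.
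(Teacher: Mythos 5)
Your proposal is correct and follows essentially the same route as the paper: both apply the pointwise inequality \eqref{228} at points of the support, use Lemma \ref{le17} to pass to the limits $\int r^2\zeta\,d\nu\to r_*^2$ and $\int q\,\zeta\,d\nu\to 1/r_*$, and conclude from the fact that $z\mapsto q(r_*,z)$ attains its maximum $1/r_*$ only at $z=0$ that the $z$-coordinates of support points must vanish (the paper states this directly as $\liminf_{\beta\to0^+}q(r_\beta,z_\beta)\ge 1/r_*-C\gamma/W$ rather than via your contradiction/subsequence framing). The only cosmetic point is that \eqref{228} is stated for $a$ strictly inside the admissible interval, so one should take $a\to r_*$ and use continuity of $\Gamma_2$ rather than setting $a=r_*$ outright, exactly as the paper does.
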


\begin{proof}
It suffices to prove that for any $(r_\beta, z_\beta)\in supp(\zeta_{_{\beta,\Lambda}})$, we have $\lim_{\beta\to 0^+}z_\beta=0$. Indeed, it follows from $\eqref{228}$  that for any $\gamma \in (0,1)$, there holds
\begin{equation*}
  \liminf_{\beta\to 0^+}q(r_\beta,z_\beta)\ge \frac{1}{r_*}-\frac{C\gamma}{W}.
\end{equation*}
Since $\lim_{\beta \to 0^+}r_\beta=r_*$, we must have $\lim_{\beta\to 0^+}z_\beta=0$. The proof is completed.
\end{proof}

With these results in hand, one can easily obtain the following results.
\begin{proposition}\label{le19}
For any $\gamma\in (0,1)$, there holds
\begin{equation*}
  \begin{split}
     diam\big(supp(\zeta_{_{\beta,\Lambda}})\big)  \le C\beta^\gamma,& \\
     dist\big(supp(\zeta_{_{\beta,\Lambda}}),\partial D_1\big)  > 0, &
  \end{split}
\end{equation*}
provided $\beta$ is small enough, where $C>0$ is independent of $\beta$, $\Lambda$ and $\gamma$. Moreover,
\begin{equation*}
  \lim_{\beta\to 0^+}\frac{\log\big[diam\big(supp(\zeta_{_{\beta,\Lambda}})\big)\big]}{\log{\beta}}=1.
\end{equation*}
\end{proposition}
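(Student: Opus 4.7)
The plan is to imitate the concentration arguments of Lemmas \ref{diam1} and \ref{le9} (summarized in Proposition \ref{le10}) in the exterior-of-ball setting, now using the already-established limits from Lemmas \ref{le16}--\ref{le18}. The role played by the auxiliary function $\Gamma_1$ in Section 3 is now played by $\Gamma_2$, and inequality \eqref{228} replaces \eqref{214} as the quantitative mechanism forcing the vortex support to collapse to $\mathcal{C}_{r_*}$.

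First I would establish that $dist\big(supp(\zeta_{_{\beta,\Lambda}}),\partial D_1\big)>0$ for all sufficiently small $\beta$. By Lemmas \ref{le16}--\ref{le18} we have $A_\beta,B_\beta\to r_*$ and $dist_{\mathcal{C}_{r_*}}\big(supp(\zeta_{_{\beta,\Lambda}})\big)\to 0$; since the hypothesis $W<1/(6\pi d)$ gives $r_*\in(d,r_*+1)$ and $D_1=\{(r,z)\in D:0<r<r_*+1,\,|z|<d+1\}$, the support is eventually contained in a fixed compact subset of the interior of $D_1$, separated from $\partial D_1$ by at least a positive constant. Next, for the diameter bound, fix any $(r_\beta,z_\beta)\in supp(\zeta_{_{\beta,\Lambda}})$ and split $\mathcal{K}\zeta_{_{\beta,\Lambda}}(r_\beta,z_\beta)$ into contributions from $\{\sigma\le\beta^\gamma\}$ and $\{\sigma>\beta^\gamma\}$ exactly as in Lemma \ref{le6}. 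Combining with the lower bound for $\mu_{_{\beta,\Lambda}}$ from Lemma \ref{le15} and the fact that $\psi_{_{\beta,\Lambda}}(r_\beta,z_\beta)\ge 0$, together with the convergences in Lemma \ref{le17}, one arrives at
\begin{equation*}
  \liminf_{\beta\to 0^+}\int_{B_{C_0\beta^\gamma}((r_\beta,z_\beta))}\zeta_{_{\beta,\Lambda}}\,d\nu\ge 1-C_1\gamma,
\end{equation*}
with $C_0,C_1$ independent of $\beta,\Lambda,\gamma$. Choosing $\gamma$ so small that $1-C_1\gamma>1/2$ and invoking the total-mass constraint $\int\zeta_{_{\beta,\Lambda}}\,d\nu=1$ forces $diam\big(supp(\zeta_{_{\beta,\Lambda}})\big)\le 2C_0\beta^\gamma$.

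To promote this bound from some small $\gamma$ to every $\gamma\in(0,1)$, I would iterate as in Lemma \ref{diam1}: with the preliminary estimate $diam\le C/\log(1/\beta)$ in hand, the far-field contribution $I_1$ can be sharpened to $\frac{r_\beta}{2\pi}\sinh^{-1}(\beta^{-\gamma})+O(1)$, and repeating the argument then yields $\liminf\int_{B_{C\beta^\gamma}}\zeta_{_{\beta,\Lambda}}\,d\nu\ge 1-\gamma/2$, hence $diam\le C\beta^\gamma$ for every $\gamma\in(0,1)$. Finally, the logarithmic limit follows the template of Lemma \ref{le9}: the $L^\infty$ bound $\zeta_{_{\beta,\Lambda}}\le\Lambda/\beta^2$ combined with $\int\zeta_{_{\beta,\Lambda}}\,d\nu=1$ gives $m_2\big(supp(\zeta_{_{\beta,\Lambda}})\big)\ge C\beta^2$, so $diam\ge C\beta$; together with the upper bound $diam\le C\beta^\gamma$ for $\gamma$ arbitrarily close to $1$, this produces $\log[diam]/\log\beta\to 1$.

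The main obstacle is the artificial confinement condition $supp(\zeta)\subseteq D_1$ built into the admissible class $\mathcal{A}_{\beta,\Lambda}(D_1)$: a priori, the Lagrange analysis of Lemma \ref{le13} could be polluted by boundary contact at $\partial D_1$, which is only an artificial cutoff rather than the physical boundary $\partial D$. The chain of estimates in Lemmas \ref{le16}--\ref{le18} must therefore be organised so that $dist\big(supp(\zeta_{_{\beta,\Lambda}}),\partial D_1\big)>0$ is actually proved, not merely assumed, before one can later (in the rest of Section 4) treat $\zeta_{_{\beta,\Lambda}}$ as an unconstrained critical point solving the full system \eqref{1-16}--\eqref{1-17} on all of $D$. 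This separation, although geometrically clear once the $\mathcal{C}_{r_*}$ concentration is in hand, is the one point where the exterior-of-ball analysis genuinely diverges from the infinite-cylinder case.
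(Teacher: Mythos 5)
Your proposal is correct and follows essentially the route the paper intends: the paper omits a written proof of Proposition \ref{le19} precisely because it is the straightforward adaptation of Lemmas \ref{le6}, \ref{diam1} and \ref{le9} with $\Gamma_1$, \eqref{214} replaced by $\Gamma_2$, \eqref{228} and the limits of Lemmas \ref{le16}--\ref{le18}, which is exactly what you carry out (including the bootstrap of $I_1$ and the measure lower bound $m_2(supp(\zeta_{_{\beta,\Lambda}}))\ge C\beta^2$ for the logarithmic limit). The only cosmetic remark is that your positive-distance step invokes $W<1/(6\pi d)$ (so that $r_*>d$ and the support stays in a fixed compact subset of $D_1$), which is the standing hypothesis of Theorem \ref{thm2} and the setting in which the proposition is actually used.
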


Now, we further study the profile of $\zeta_{_{\beta,\Lambda}}$.
\begin{lemma}\label{lex1}
  For all sufficiently small $\beta$, there holds
\begin{equation}\label{x1}
  \{\zeta_{_{\beta,\Lambda}}>0\}=\{(r,z)\in D\,|\,\psi_{_{\beta,\Lambda}}>0\}.
\end{equation}
\end{lemma}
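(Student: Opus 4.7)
\medskip

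\noindent\textbf{Proof proposal.} The inclusion $\{\zeta_{_{\beta,\Lambda}}>0\}\subseteq\{\psi_{_{\beta,\Lambda}}>0\}$ is immediate from the formula \eqref{401}, which forces $\zeta_{_{\beta,\Lambda}}$ to vanish wherever $\psi_{_{\beta,\Lambda}}\le 0$. So the task is to prove the reverse inclusion, i.e.\ to show that for small $\beta$,
\[
\{\psi_{_{\beta,\Lambda}}>0\}\subseteq D_1\cap\{\psi_{_{\beta,\Lambda}}>0\}=\{\zeta_{_{\beta,\Lambda}}>0\}.
\]
Equivalently, I need to verify that $\psi_{_{\beta,\Lambda}}\le 0$ everywhere on $D\setminus D_1$.

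The plan is to control the three pieces in
\[
\psi_{_{\beta,\Lambda}}(r,z)=\mathcal{K}\zeta_{_{\beta,\Lambda}}(r,z)-F(r,z)\log\tfrac{1}{\beta}-\mu_{_{\beta,\Lambda}},\qquad F(r,z):=\tfrac{Wr^2}{2}\Bigl(1-\tfrac{d^3}{(r^2+z^2)^{3/2}}\Bigr),
\]
on the region $D\setminus D_1$. First I observe that $F\ge 0$ throughout $D$, since $r^2+z^2>d^2$ there. Second, by Proposition \ref{le19} the support of $\zeta_{_{\beta,\Lambda}}$ eventually lies in a small neighborhood of $(r_*,0)$ whose distance from $D\setminus D_1$ is bounded below by a positive constant independent of $\beta$; combining this with the Green's-function estimates of Lemmas \ref{le1} and \ref{le2} (in particular using that $\sigma$ stays bounded away from $0$ and that $G$ decays at infinity) yields a uniform bound $\mathcal{K}\zeta_{_{\beta,\Lambda}}(r,z)\le C_0$ on $D\setminus D_1$, with $C_0$ independent of $\beta$ and $\Lambda$.

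Third, and this is the main point, I will show that the Lagrange multiplier satisfies
\[
\mu_{_{\beta,\Lambda}}\ge \mu_*\log\tfrac{1}{\beta}-C,\qquad \mu_*:=\tfrac{r_*}{2\pi}-\tfrac{Wr_*^2}{2}+\tfrac{Wd^3}{2r_*}>0.
\]
The lower bound follows from Lemma \ref{le15} applied with $a=r_*$ together with Lemma \ref{le17} (which gives $\int r^2\zeta_{_{\beta,\Lambda}}d\nu\to r_*^2$ and $\int q\,\zeta_{_{\beta,\Lambda}}d\nu\to 1/r_*$). The positivity of $\mu_*$ is the delicate point: since $W<1/(6\pi d)$ places $r_*$ strictly inside $(d,+\infty)$, we have $\Gamma_2'(r_*)=0$, i.e.\ $\tfrac{1}{2\pi}=2Wr_*+\tfrac{Wd^3}{r_*^2}$, and substituting $\tfrac{r_*}{2\pi}=2Wr_*^2+\tfrac{Wd^3}{r_*}$ into $\mu_*$ gives the clean identity $\mu_*=\tfrac{3Wr_*^2}{2}+\tfrac{3Wd^3}{2r_*}>0$.

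Combining these three ingredients, for $(r,z)\in D\setminus D_1$,
\[
\psi_{_{\beta,\Lambda}}(r,z)\le C_0-0\cdot\log\tfrac{1}{\beta}-\mu_*\log\tfrac{1}{\beta}+C \longrightarrow -\infty
\]
as $\beta\to 0^+$, uniformly in $(r,z)\in D\setminus D_1$. Hence $\psi_{_{\beta,\Lambda}}\le 0$ on $D\setminus D_1$ for all sufficiently small $\beta$, which together with \eqref{401} yields \eqref{x1}. The only real obstacle is the algebraic verification that $\mu_*>0$; everything else is a straightforward application of the earlier asymptotics and the uniform Green's-function bound on the complement of a fixed neighborhood of the concentration point.
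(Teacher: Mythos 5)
Your argument is correct in substance, but it takes a genuinely different route from the paper. The paper's proof is purely qualitative: from \eqref{401} one has $\{\zeta_{_{\beta,\Lambda}}>0\}=D_1\cap\{\psi_{_{\beta,\Lambda}}>0\}$, and then the maximum principle is applied on $D\setminus \mathrm{supp}(\zeta_{_{\beta,\Lambda}})$, where $\mathcal{L}\psi_{_{\beta,\Lambda}}=0$, with $\psi_{_{\beta,\Lambda}}\le 0$ on $\partial D\cup\partial\big(\mathrm{supp}(\zeta_{_{\beta,\Lambda}})\big)$ and at infinity; the only asymptotic input is $\mu_{_{\beta,\Lambda}}\ge 0$ for small $\beta$ (Lemmas \ref{le14}, \ref{ad4}), and no restriction on $W$ or on the location of $r_*$ is needed. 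You instead prove the quantitative bound $\psi_{_{\beta,\Lambda}}\le C-c\log\frac{1}{\beta}<0$ uniformly on $D\setminus D_1$, via the uniform estimate on $\mathcal{K}\zeta_{_{\beta,\Lambda}}$ away from the core, the sign of $F$, and the logarithmic divergence of $\mu_{_{\beta,\Lambda}}$ from Lemmas \ref{le15} and \ref{le17}, with the nice identity $\Gamma_2'(r_*)=0$ giving $\mu_*=\tfrac{3Wr_*^2}{2}+\tfrac{3Wd^3}{2r_*}>0$. This yields a stronger conclusion ($\psi_{_{\beta,\Lambda}}\to-\infty$ uniformly off $D_1$) at the cost of more machinery and three small points to tidy: Lemma \ref{le15} requires $a\in(d,r_*+1)$, so you should let $a\to r_*$ and absorb the $o(1)\log\frac{1}{\beta}$ errors, getting $(\mu_*-\varepsilon)\log\frac{1}{\beta}-C_\varepsilon$ rather than the cleaner bound stated; your positivity argument for $\mu_*$ uses $W<1/(6\pi d)$, while the lemma as stated allows any $W>0$ (when $r_*=d$ one has $\mu_*=\tfrac{d}{2\pi}>0$ directly); and the uniform bound $\mathcal{K}\zeta_{_{\beta,\Lambda}}\le C_0$ on $D\setminus D_1$ really rests on Lemma \ref{le18} (the core stays a fixed distance from $D\setminus D_1$, not just from $\partial D_1$ as in Proposition \ref{le19}) together with the decay $G\lesssim rr'/|x-x'|$ from \eqref{Tadiewrong} for large $r$, which deserves a line of proof.
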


\begin{proof}
By virtue of \eqref{401}, we first know that $ \{\zeta_{_{\beta,\Lambda}}>0\}=\{(r,z)\in D_1\,|\,\psi_{_{\beta,\Lambda}}>0\}$.
Note that
\begin{equation*}
  \begin{split}
      \mathcal{L}\psi_{_{\beta,\Lambda}}&=0\ \ \text{in}\  D\backslash  supp(\zeta_{_{\beta,\Lambda}}), \\
      \psi_{_{\beta,\Lambda}}&\le 0\ \ \text{on}\  \partial D \cup \partial \big(supp(\zeta_{_{\beta,\Lambda}})\big), \\
     \psi_{_{\beta,\Lambda}} &\le 0 \ \ \text{at}\  \infty .
  \end{split}
\end{equation*}
By the maximum principle, we conclude that $\psi_{_{\beta,\Lambda}}\le 0$ in $D\backslash  \{\zeta_{_{\beta,\Lambda}}>0\}$. Hence,
\begin{equation*}
  \{\zeta_{_{\beta,\Lambda}}>0\}=\{(r,z)\in D\,|\,\psi_{_{\beta,\Lambda}}>0\}.
\end{equation*}
The proof is completed.
\end{proof}

Arguing as in the proof of Proposition \ref{ad3}, we get

\begin{proposition}\label{ad5}
   Suppose $W<1/(6\pi d)$, then there exists $\Lambda_0>1$ such that
  \begin{equation*}
    \zeta_{_{\beta, \Lambda_0}}=\frac{1}{r^2\beta^2}{(\psi_{_{\beta, \Lambda_0}})}_{+}+\frac{\alpha}{\beta}\chi_{_{\{\psi_{\beta,\Lambda_0}>0\}}}
  \end{equation*}
 provided $\beta$ is sufficiently small.
\end{proposition}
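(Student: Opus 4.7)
The plan is to adapt the argument of Proposition \ref{ad3} to the exterior-of-a-ball setting. The strategy has two parts: (i) establish an upper bound on $(\psi_{_{\beta,\Lambda}})_+$ on $supp(\zeta_{_{\beta,\Lambda}})$ that grows only like $\log\Lambda$ (uniformly in $\beta$), an analog of Lemma \ref{ad2}; (ii) compare it to the linear-in-$\Lambda$ lower bound forced on any point of the saturation set $\{\zeta_{_{\beta,\Lambda}} = \Lambda/\beta^2\}$, thereby making the saturation set empty once $\Lambda$ is taken large enough.

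For step (i), I would fix $(r,z) \in supp(\zeta_{_{\beta,\Lambda}})$ and expand $\psi_{_{\beta,\Lambda}}$ via its defining identity. The hypothesis $W < 1/(6\pi d)$ is what forces $r_* > d$, so by Lemmas \ref{le16}--\ref{le18} and Proposition \ref{le19} the support concentrates strictly inside $D_1$ at $(r_*,0)$ with $diam(supp(\zeta_{_{\beta,\Lambda}})) \le C\beta^\gamma$. Using the Green's function expansion (Lemmas \ref{le1}, \ref{le2}) together with a rearrangement bound based on $0 \le \zeta_{_{\beta,\Lambda}} \le \Lambda/\beta^2$ and $\int \zeta_{_{\beta,\Lambda}}\,d\nu = 1$, one gets
$$\mathcal{K}\zeta_{_{\beta,\Lambda}}(r,z) \le \frac{A_\beta}{2\pi}\log\frac{1}{\beta} + C\log\Lambda + C,$$
with $C$ independent of $\beta$ and $\Lambda$. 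Since $r_* > d > 0$, the function $r^2 d^3/(r^2+z^2)^{3/2}$ is smooth at $(r_*,0)$, and its oscillation over a ball of radius $C\beta^\gamma$ around $(A_\beta,0)$ is $O(\beta^\gamma)$, contributing only an $o(1)$ error after multiplication by $\log\tfrac{1}{\beta}$; the same applies to $r^2$.

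For the lower bound on $\mu_{_{\beta,\Lambda}}$, I would apply Lemma \ref{le15} with $a = A_\beta$ (valid for small $\beta$ since $A_\beta \to r_* \in (d, r_*+1)$), combined with the pointwise estimates $\int_D \zeta_{_{\beta,\Lambda}} r^2\,d\nu \ge A_\beta^2$ and $q(r,z) \le 1/r \le 1/A_\beta$ on the support, which give $\int_D q\,\zeta_{_{\beta,\Lambda}}\,d\nu \le 1/A_\beta$. This yields
$$\mu_{_{\beta,\Lambda}} \ge \Bigl(\frac{A_\beta}{2\pi} - \frac{WA_\beta^2}{2} + \frac{Wd^3}{2A_\beta}\Bigr)\log\frac{1}{\beta} - C.$$
Inserting these into the formula for $\psi_{_{\beta,\Lambda}}$ cancels the $\log\frac{1}{\beta}$ contributions exactly, leaving $(\psi_{_{\beta,\Lambda}})_+ \le C_1\log\Lambda + C_2$ on $supp(\zeta_{_{\beta,\Lambda}})$, with $C_1, C_2$ independent of $\beta$ and $\Lambda$.

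Step (ii) then follows routinely: on $\{\zeta_{_{\beta,\Lambda}} = \Lambda/\beta^2\}$ one has $\psi_{_{\beta,\Lambda}} \ge (\Lambda - \alpha\beta)r^2 \ge C_3\Lambda - C_4$ for small $\beta$ (using $r \ge A_\beta \to r_* > 0$). Combining with step (i) gives $C_3\Lambda - C_4 \le C_1\log\Lambda + C_2$, which fails for $\Lambda \ge \Lambda_0$ sufficiently large. Hence the saturation set has zero measure for all small $\beta$, and the formula \eqref{401} reduces to the stated form, with the characteristic function extended from $D_1$ to $D$ via Lemma \ref{lex1} together with the interior-concentration statement of Proposition \ref{le19}. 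The main technical obstacle is ensuring that every appearance of $\log\frac{1}{\beta}$ cancels in step (i): this forces the obstacle term $\frac{Wr^2 d^3}{2(r^2+z^2)^{3/2}}\log\frac{1}{\beta}$ to be treated with exactly the same point-evaluation approximation ($\to \frac{Wd^3}{2A_\beta}\log\frac{1}{\beta}$) as appears in the lower bound for $\mu_{_{\beta,\Lambda}}$, which in turn requires the bound $\int q\,\zeta\,d\nu \le 1/A_\beta$ rather than just the limit $1/r_*$, so that the cancellation is exact rather than approximate.
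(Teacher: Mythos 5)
Your proposal is correct and follows essentially the same route as the paper, which proves this proposition simply by repeating the argument of Proposition \ref{ad3} in the exterior setting (an analogue of Lemma \ref{ad2} giving $(\psi_{_{\beta,\Lambda}})_+\le C_1\log\Lambda+C_2$ on the support via the Green's function expansion and the lower bound on $\mu_{_{\beta,\Lambda}}$, then the linear-in-$\Lambda$ lower bound on the saturation set, with Lemma \ref{lex1} used to pass from $\chi_{_{D_1\cap\{\psi>0\}}}$ to $\chi_{_{\{\psi>0\}}}$). The only cosmetic point is that rather than invoking Lemma \ref{le15} with the $\beta$-dependent choice $a=A_\beta$ (its constant depends on $a$), it is cleaner to take the fixed admissible value $a=r_*$ — possible precisely because $W<1/(6\pi d)$ forces $r_*\in(d,r_*+1)$ — and use $\Gamma_2(r_*)\ge\Gamma_2(A_\beta)$, which yields exactly the bound you state and makes the cancellation of the $\log\frac{1}{\beta}$ terms exact.
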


\begin{remark}\label{rem2}
For all sufficiently small $\beta$, the stream-function $\psi_{_{\beta,\Lambda_0}}$ satisfies
\begin{equation*}
  \mathcal{L}\psi_{_{\beta,\Lambda_0}}=f(r,\psi_{_{\beta,\Lambda_0}})\ \ in\  D,\ \ \ \psi=-\mu_\beta\ \ on\ \partial D,
\end{equation*}
where $f(r,t)=\frac{t_+}{r^2\beta^2}+\frac{\alpha}{\beta}\chi_{\{t>0\}}$.
\end{remark}

For the rest of this section, we shall abbreviate $(\zeta_{_{\beta, \Lambda_0}},\psi_{_{\beta, \Lambda_0}}, \mu_{_{\beta, \Lambda_0}})$ to $(\zeta_{_\beta}, \psi_{_\beta}, \mu_{_\beta})$. As before, we have the following sharp estimates when the vortices concentrate in the interior of the domain. Moreover, let $g_\beta$ be defined as in Section 3, then the asymptotic nature of $\zeta_{_\beta}$ can also be obtained.
\begin{proposition}\label{le20}
Suppose $W<1/(6\pi d)$, then as $\beta \to 0^+$,
\begin{equation*}
\begin{split}
   E_\beta(\zeta_{_\beta}) & =(\frac{r_*}{4\pi}-\frac{Wr_*^2}{2}+\frac{Wd^3}{2r_*})\log{\frac{1}{\beta}}+O(1),\\
   \mu_{_\beta}  & =(\frac{r_*}{2\pi}-\frac{Wr_*^2}{2}+\frac{Wd^3}{2r_*})\log{\frac{1}{\beta}}+O(1).
\end{split}
\end{equation*}
Also, there exists a constant $R_1>1$ independent of $\beta$ such that
 $$diam\big(supp(\zeta_{_\beta})\big)\le R_1\beta,$$
 provided $\beta$ is sufficiently small. Moreover, every accumulation point of the family $\{g_{_\beta}:\beta>0\}$ in the weak topology of $L^2$ must be a radially nonincreasing function.
\end{proposition}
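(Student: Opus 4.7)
The plan is to parallel the arguments of Section~3 (Lemmas \ref{le11}, \ref{sharp}, \ref{le12}), with the necessary modifications coming from the extra potential $\tfrac{Wr^2d^3}{2(r^2+z^2)^{3/2}}\log\tfrac{1}{\beta}$ and the fact that the concentration point $r_*$ lies strictly outside the ball (this is exactly what $W<1/(6\pi d)$ buys us, since it forces $r_*>d$, so by Lemma \ref{le18} we have $\mathrm{dist}(\mathrm{supp}(\zeta_{_\beta}),\partial D)$ bounded away from zero, and in particular $q(r,z)$ is smooth and bounded on $\mathrm{supp}(\zeta_{_\beta})$).

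First I would pin down the energy estimate. Lemma \ref{le14} already gives the lower bound $E_\beta(\zeta_{_\beta})\ge(\tfrac{r_*}{4\pi}-\tfrac{Wr_*^2}{2}+\tfrac{Wd^3}{2r_*})\log\tfrac{1}{\beta}-C$ (taking $a\to r_*$). For the matching upper bound, I would use Proposition \ref{le19} to place $\mathrm{supp}(\zeta_{_\beta})\subseteq B_{4d\beta^{1/2}}((A_\beta,Z_{\beta1}))$ for small $\beta$, then insert the Green-function expansion from Lemma \ref{le2} into $\int_D\zeta_{_\beta}\mathcal{K}\zeta_{_\beta}\,d\nu$ and use Lemma 4.2 of \cite{Tur83} to compare $\iint\log\tfrac{1}{|x-x'|}\zeta_{_\beta}(x)\zeta_{_\beta}(x')\,drdzdr'dz'$ with $\log\tfrac{1}{\beta}\{\int_D\zeta_{_\beta}\,drdz\}^2+O(1)$. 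Combining this with Lemma \ref{le17} ($\int_D r^2\zeta_{_\beta}\,d\nu\to r_*^2$, $\int_D q\,\zeta_{_\beta}\,d\nu\to 1/r_*$) yields the upper bound $E_\beta(\zeta_{_\beta})\le(\tfrac{r_*}{4\pi}-\tfrac{Wr_*^2}{2}+\tfrac{Wd^3}{2r_*})\log\tfrac{1}{\beta}+O(1)$. The claimed expansion of $\mu_{_\beta}$ is then immediate from Lemma \ref{ad4} together with Lemma \ref{le17}.

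Next, I would upgrade the $\beta^\gamma$ diameter bound of Proposition \ref{le19} to the linear-in-$\beta$ bound, following the bootstrap in Lemma \ref{sharp}. Let $R>1$ be a parameter, split $\mathcal{K}\zeta_{_\beta}(r_\beta,z_\beta)$ at $\sigma=R\beta$ as before, and use Lemma \ref{le2} to get
\[
\mathcal{K}\zeta_{_\beta}(r_\beta,z_\beta)\le \frac{r_\beta}{2\pi}\log\tfrac{1}{R\beta}\int_{\{\sigma>R\beta\}}\zeta_{_\beta}\,d\nu+\frac{r_\beta}{2\pi}\log\tfrac{1}{\beta}\int_{\{\sigma\le R\beta\}}\zeta_{_\beta}\,d\nu+C.
\]
Plugging this into the lower bound from the sharp $\mu_{_\beta}$ asymptotics (and noting $q(r_\beta,z_\beta)=1/r_*+o(1)$ on the support by Lemma \ref{le18}) and dividing by $\log\tfrac{1}{\beta}$ produces
\[
\int_{\{\sigma\le R\beta\}}\zeta_{_\beta}\,d\nu \ge 1-\frac{C}{\log R}.
\]
Choosing $R$ large, more than half the mass sits in a ball of radius $O(R\beta)$ around every point of $\mathrm{supp}(\zeta_{_\beta})$, which together with $\int_D\zeta_{_\beta}\,d\nu=1$ forces $\mathrm{diam}(\mathrm{supp}(\zeta_{_\beta}))\le R_1\beta$.

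Finally, for the radially-nonincreasing asymptotic profile I would copy the proof of Lemma \ref{le12} verbatim. Set $g_{_\beta}(x)=\beta^2\zeta_{_\beta}(X_\beta+\beta x)$ and let $g^\triangle_{_\beta}$ be its Lebesgue symmetric decreasing rearrangement about $0$; both families are supported in a fixed ball by the $O(\beta)$ diameter bound. The competing function $\tilde\zeta_{_\beta}$ obtained by undoing the rescaling from $g^\triangle_{_\beta}$ about $X_\beta$ is still admissible for $\beta$ small, because $X_\beta\to(r_*,0)$ with $r_*>d$ and the $R_1\beta$-ball around $X_\beta$ stays inside $D_1$ (this uses the second part of Proposition \ref{le19}). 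The linear term $-\tfrac{W}{2}\log\tfrac{1}{\beta}\int r^2\zeta\,d\nu$ and the new potential term $+\tfrac{Wd^3}{2}\log\tfrac{1}{\beta}\int q\,\zeta\,d\nu$ are preserved to leading order under the rearrangement (both expand as $r_*^2\int\zeta\,d\nu+O(\beta)\log\tfrac{1}{\beta}\cdot\|\zeta\|_1$, and the same for $q$), and the nonlinear penalty $\tfrac{\beta^2}{2}\int r^2(\zeta-\tfrac{\alpha}{\beta})_+^2\,d\nu$ decreases under rearrangement. Comparing $E_\beta(\tilde\zeta_{_\beta})\le E_\beta(\zeta_{_\beta})$ via the Green-function expansion of Lemma \ref{le2} reduces, after passing to the limit, to
\[
\iint\log\tfrac{1}{|x-x'|}g(x)g(x')\,d(x,x')=\iint\log\tfrac{1}{|x-x'|}h(x)h(x')\,d(x,x'),
\]
where $g,h$ are any weak-$L^2$ limits of $g_{_\beta}$ and $g^\triangle_{_\beta}$. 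Riesz' inequality already gives $\le$; equality plus Lemma 3.2 of \cite{BG} produces a translation $T$ with $Tg=h$, and the center-of-mass normalization $\int x\,g_{_\beta}(x)\,d\textit{m}_2=0$ forces $T=\mathrm{id}$, whence $g=h$ is radially nonincreasing.

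The main obstacle is verifying that the linear perturbation $\tfrac{Wd^3}{2}\log\tfrac{1}{\beta}\int q(r,z)\zeta\,d\nu$ does not spoil either the sharp energy/multiplier expansion or the rearrangement comparison. This is where the hypothesis $W<1/(6\pi d)$ enters decisively: it ensures $r_*>d$, so $q$ is real-analytic and uniformly Lipschitz on a neighborhood of $\mathcal{C}_{r_*}$, and the Taylor expansion $q(r,z)=1/r_*+O(|{(r,z)-(r_*,0)}|)$ converts the linear functional $\int q\,\zeta\,d\nu$ into $\tfrac{1}{r_*}\int\zeta\,d\nu+O(\beta)\cdot\|\zeta\|_1$ on sets of diameter $O(\beta)$, which is exactly the error budget allowed by the $O(1)$-terms in the statement.
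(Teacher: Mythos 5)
Your proposal follows exactly the route the paper intends for Proposition \ref{le20}: the paper states it without proof, remarking that one argues ``as before,'' i.e.\ by transplanting Lemmas \ref{le11}, \ref{sharp} and \ref{le12} of Section 3, with the extra potential term $\frac{Wd^3}{2}\log\frac{1}{\beta}\int q\,\zeta\,d\nu$ controlled through Lemmas \ref{ad4}, \ref{le17}, \ref{le18} and Proposition \ref{le19}, and with the hypothesis $W<1/(6\pi d)$ used precisely to force $r_*>d$ so that the support stays away from $\partial D$ and $q$ is smooth there — which is exactly what you do. Your sketch is correct at the same level of detail as the paper's Section 3 arguments (including the same implicit step that $\int_D r^2\zeta_{_\beta}\,d\nu$ and $\int_D q\,\zeta_{_\beta}\,d\nu$ converge fast enough for the $O(1)$ remainder in the $\mu_{_\beta}$ expansion), so nothing further is needed.
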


 We are now ready to give proof of Theorem \ref{thm2}.
\begin{proof}[Proof of Theorem \ref{thm2}]
Having made all the above preparations, the proof is similar to Theorem \ref{thm1}; consequently, we omit them here.
\end{proof}

\section{Vortex Rings In The Whole Space}
In this section, we study vortex rings in the whole space. Because the approach used is similar to the previous two sections, we list only some key steps without providing detailed proof.

Let $D=\Pi$. For fixed $\alpha\ge0$, $\beta>0$ and $W>0$, we consider the energy functional as follows
\[E_\beta(\zeta)=\frac{1}{2}\int_D{\zeta \mathcal{K}\zeta}d\nu-\frac{{W}}{2}\log{\frac{1}{\beta}}\int_{D}r^2\zeta d\nu-\frac{\beta^2}{2}\int_D r^2(\zeta-\frac{\alpha}{\beta})_{+}^2d\nu.\]
Let $D_2=\{(r,z)\in D~|~0<r<1/(2\pi W)\}$. We adopt the class of admissible functions $\mathcal{A}_{\beta,\Lambda}$ as follows
\begin{equation*}
\mathcal{A}_{\beta,\Lambda}=\{\zeta\in L^\infty(D)~|~ 0\le \zeta \le \frac{\Lambda}{\beta^2}~ \text{a.e.}, \int_{D}\zeta d\nu \le1,~ supp(\zeta)\subseteq D_2 \},
\end{equation*}
where $\Lambda>\max\{\alpha \beta,1\}$ is a fixed positive number.

For any $\zeta\in \mathcal{A}_{\beta,\Lambda}$, one can check that $\mathcal{K}\zeta \in W^{2,p}_{\text{loc}}(D)\cap C_{\text{loc}}^{1,\gamma}(\bar{D})$ for any $p>1$, $0<\gamma<1$. Recall that ${\zeta}^*$ denotes the Steiner symmetrization of $\zeta$ with respect to the line $z=0$ in $D$.

\begin{lemma}\label{le5-1}
For every prescribed  $\alpha\ge0$, $1>\beta>0$, $W>0$ and $\Lambda>\max\{\alpha \beta,1\}$, there exists $\zeta=\zeta_{_{\beta,\Lambda}} \in \mathcal{A}_{\beta,\Lambda}$ such that
\begin{equation}\label{501}
 E_\beta(\zeta_{_{\beta,\Lambda}})= \max_{\tilde{\zeta} \in \mathcal{A}_{\beta,\Lambda}}E_\beta(\tilde{\zeta})<+\infty.
\end{equation}
Moreover, there exists a Lagrange multiplier $\mu_{_{\beta,\Lambda}} \ge 0$ such that
\begin{equation}\label{502}
\zeta_{_{\beta,\Lambda}}=\Big(\frac{\psi_{_{\beta,\Lambda}}}{r^2\beta^2}+\frac{\alpha}{\beta}\Big){\chi}_{_{D_2\cap\{0<\psi_{_{\beta,\Lambda}}<(\Lambda-\alpha\beta)r^2\}}}+\frac{\Lambda}{\beta^2}\chi_{_{D_2\cap\{\psi_{\beta,\Lambda} \ge (\Lambda-\alpha\beta)r^2\}}} \ \ a.e.\  \text{in}\  D,
\end{equation}
where
\begin{equation}\label{503}
 \psi_{_{\beta,\Lambda}}=\mathcal{K}\zeta_{_{\beta,\Lambda}}-\frac{Wr^2}{2}\log{\frac{1}{\beta}}-\mu_{_{\beta,\Lambda}}.
\end{equation}
Also, $\zeta_{_{\beta,\Lambda}}=(\zeta_{_{\beta,\Lambda}})^*$. Furthermore, whenever $E(\zeta_{_{\beta,\Lambda}})>0$ and $\mu_{_{\beta,\Lambda}}>0$ there holds $\int_D\zeta_{_{\beta,\Lambda}} d\nu=1$ and $\zeta_{_{\beta,\Lambda}}$ has compact support in $D$.
\end{lemma}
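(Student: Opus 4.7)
The plan is to follow the proof of Lemma \ref{le3} almost line by line, since the structure is identical once one restricts to $D_2$ in the admissibility class. I would take a maximizing sequence $\{\zeta_j\}\subset \mathcal{A}_{\beta,\Lambda}$ and replace each $\zeta_j$ by its Steiner symmetrization $(\zeta_j)^*$ about $\{z=0\}$. This preserves admissibility (since $D_2$ is $z$-symmetric and the pointwise and mass constraints are rearrangement-invariant) and does not decrease $E_\beta$: the quadratic term $\int\zeta\mathcal{K}\zeta\,d\nu$ does not decrease by a Riesz-type rearrangement inequality applied to the kernel $K(r,z,r',z')$, which is decreasing in $|z-z'|$ by Lemmas \ref{le1}--\ref{le2}, while the other three terms of $E_\beta$ are invariant because $r^2$, $(\zeta-\alpha/\beta)_+^2$ and the measure $\nu$ do not see the $z$-variable. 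The uniform bounds $0\le\zeta_j\le\Lambda/\beta^2$ and $\int_D\zeta_j\,d\nu\le 1$ then yield a subsequence converging weakly in $L^{10/7}(D,\nu)$ to some $\zeta\in\mathcal{A}_{\beta,\Lambda}$ with $\zeta=\zeta^*$. The quadratic form passes to the limit by the compactness lemma (Lemma 4.8 of \cite{Dou2}), and the remaining terms are lower semicontinuous, giving $E_\beta(\zeta)\ge\limsup_j E_\beta(\zeta_j)$, so $\zeta$ is the desired maximizer.

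The Euler--Lagrange characterization \eqref{502} then follows exactly as in Lemma \ref{le3}. Testing the first variation $\zeta_{(s)}=\zeta+s(\tilde\zeta-\zeta)$, $s\in[0,1]$, against arbitrary $\tilde\zeta\in\mathcal{A}_{\beta,\Lambda}$ produces the inequality
\[
\int_D(\tilde\zeta-\zeta)\Bigl[\mathcal{K}\zeta-\tfrac{Wr^2}{2}\log\tfrac{1}{\beta}-\beta^2 r^2(\zeta-\tfrac{\alpha}{\beta})_+\Bigr]\,d\nu\le 0,
\]
and the bathtub principle \cite{Lieb} identifies $\zeta$ on $D_2$ through a threshold $\mu_{\beta,\Lambda}\ge 0$, with $\zeta\equiv 0$ on $D\setminus D_2$ enforced by the support constraint. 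The identity $m_2(\{0<\zeta\le\alpha/\beta\})=0$, needed to rewrite the intermediate region in the precise form \eqref{502}, follows because $\mathcal{L}\psi_{\beta,\Lambda}=\zeta$ a.e., exactly as in Lemma \ref{le3}. When $E_\beta(\zeta)>0$ and $\mu_{\beta,\Lambda}>0$, I would prove the mass-saturation $\int_D\zeta\,d\nu=1$ by the same free-variation argument as in Lemma \ref{le3}: if the mass were strictly less than $1$, one could perturb by a signed compactly supported $\phi$ without violating any constraint, producing a second Euler--Lagrange system without the multiplier $\mu_{\beta,\Lambda}$, and comparison of the two systems would force $\mu_{\beta,\Lambda}=0$. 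Compact support of $\zeta$ then follows from the Steiner symmetry of $\mathcal{K}\zeta$ in $z$, the decay of $\mathcal{K}\zeta$ at infinity, and the strict inequality $\mathcal{K}\zeta-\tfrac{Wr^2}{2}\log\tfrac{1}{\beta}>\mu_{\beta,\Lambda}>0$ on $\{\zeta>0\}$.

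The only genuine obstacle is compactness: $D=\Pi$ is unbounded in both $r$ and $z$, and the admissibility class only localizes $\zeta$ in $r$ via $\operatorname{supp}(\zeta)\subseteq D_2$, so a priori mass could escape to $|z|=\infty$ along the maximizing sequence. This is exactly why the Steiner symmetrization in $z$ is carried out at the outset: combined with the $\nu$-mass bound it tightens the sequence symmetrically around $z=0$, so that the compactness result of \cite{Dou2} applies to the quadratic form. Once compactness is secured, every subsequent step is a direct transcription of Lemma \ref{le3}.
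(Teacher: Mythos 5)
Your proposal is correct and follows essentially the route the paper intends: the paper gives no separate proof of this lemma but defers to the argument of Lemma \ref{le3} (Steiner symmetrization of a maximizing sequence, weak $L^{10/7}(D,\nu)$ convergence, Lemma 4.8 of \cite{Dou2} for the quadratic term, lower semicontinuity of the remaining terms, then the bathtub/first-variation characterization, the free-variation argument for $\int_D\zeta\,d\nu=1$, and the standard compact-support argument), which is exactly what you transcribe, with the support constraint in $D_2$ handled correctly. The only blemishes are cosmetic (the energy here has two, not three, non-quadratic terms), so no gap to report.
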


The following result is a counterpart of Proposition \ref{ad5} in Section 4.
\begin{proposition}\label{le5-2}
   There exists $\Lambda_0>1$ such that
  \begin{equation*}
    \zeta_{_{\beta, \Lambda_0}}=\frac{1}{r^2\beta^2}{(\psi_{_{\beta, \Lambda_0}})}_{+}+\frac{\alpha}{\beta}\chi_{_{\{\psi_{\beta,\Lambda_0}>0\}}}
  \end{equation*}
 provided $\beta$ is sufficiently small.
\end{proposition}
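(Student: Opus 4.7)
The plan is to adapt the contradiction argument of Proposition \ref{ad3} verbatim. The strategy is to establish both an $L^\infty$ upper bound of the form $(\psi_{_{\beta,\Lambda}})_+\le C_1\log\Lambda+C_2$ and a pointwise lower bound $(\psi_{_{\beta,\Lambda}})_+\ge C_3\Lambda-C_4$ on the saturation set $\{\zeta_{_{\beta,\Lambda}}=\Lambda/\beta^2\}$; their incompatibility forces this set to have zero Lebesgue measure once $\Lambda$ is large.

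First I would record the whole-space analogues of the preliminary results of Section~3. A test function $\tilde\zeta=\beta^{-2}\chi_{_{B_{\beta/\sqrt{a\pi}}((a,0))}}\in\mathcal{A}_{\beta,\Lambda}$ with $a\in(0,1/(2\pi W))$ yields the energy lower bound $E_\beta(\zeta_{_{\beta,\Lambda}})\ge\bigl(a/(4\pi)-Wa^2/2\bigr)\log(1/\beta)-C$, and the Lagrange multiplier identity in the spirit of Lemma~\ref{ad1} gives $\mu_{_{\beta,\Lambda}}\ge\bigl(A_\beta/(2\pi)-WA_\beta^{2}/2\bigr)\log(1/\beta)-C$. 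The Green-function expansions of Lemmas~\ref{le1}--\ref{le2}, valid on the admissible domain $\Pi$, drive the concentration arguments of Lemmas~\ref{le6}--\ref{diam1}; since $r_*=1/(4\pi W)$ lies strictly in the interior of $D_2$, the support constraint $\mathrm{supp}(\zeta)\subseteq D_2$ eventually becomes inactive and those arguments transfer verbatim, producing $\mathrm{diam}\,\mathrm{supp}(\zeta_{_{\beta,\Lambda}})\le C\beta^\gamma$ for every $\gamma\in(0,1)$ and $\lim_{\beta\to 0^+}A_\beta=r_*$.

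The upper bound, which is the whole-space analogue of Lemma~\ref{ad2}, is obtained by expanding $\mathcal{K}\zeta_{_{\beta,\Lambda}}$ via Lemma~\ref{le2} and using a rearrangement estimate together with $\zeta_{_{\beta,\Lambda}}\le\Lambda/\beta^2$ to bound
\[
\int_D\log\bigl[(r-r')^2+(z-z')^2\bigr]^{-1/2}\zeta_{_{\beta,\Lambda}}(r',z')\,dr'dz'\le A_\beta^{-1}\log(\sqrt{\Lambda}/\beta)+O(1).
\]
Absorbing the leading term $\bigl(A_\beta/(2\pi)-WA_\beta^{2}/2\bigr)\log(1/\beta)$ into the lower bound on $\mu_{_{\beta,\Lambda}}$ established above yields $(\psi_{_{\beta,\Lambda}})_+\le C_1\log\Lambda+C_2$, with constants independent of $\beta$ and $\Lambda$. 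On the saturation set the formula \eqref{502} forces $(\psi_{_{\beta,\Lambda}})_+\ge(\Lambda-\alpha\beta)r^2$, and the concentration of the support at $(r_*,0)$ with $r_*=1/(4\pi W)>0$ implies $r\ge r_*/2$ there for $\beta$ small, so $(\psi_{_{\beta,\Lambda}})_+\ge C_3\Lambda-C_4$. Comparing with the upper bound gives $C_3\Lambda-C_4\le C_1\log\Lambda+C_2$, which fails once $\Lambda$ exceeds some threshold $\Lambda_0>1$; hence $m_2(\{\zeta_{_{\beta,\Lambda_0}}=\Lambda_0/\beta^2\})=0$ for all $\beta$ sufficiently small, and \eqref{502} collapses to
\[
\zeta_{_{\beta,\Lambda_0}}=\Bigl(\frac{\psi_{_{\beta,\Lambda_0}}}{r^2\beta^2}+\frac{\alpha}{\beta}\Bigr)\chi_{_{D_2\cap\{\psi_{_{\beta,\Lambda_0}}>0\}}}.
\]

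It remains to show $\{\psi_{_{\beta,\Lambda_0}}>0\}\subseteq D_2$, so that the cutoff $\chi_{D_2}$ can be dropped. On $D\setminus D_2$ one has $\mathcal{L}\psi_{_{\beta,\Lambda_0}}=0$ since $\zeta_{_{\beta,\Lambda_0}}\equiv 0$ there; at $r=0$, $\psi_{_{\beta,\Lambda_0}}=-\mu_{_{\beta,\Lambda_0}}<0$; at infinity, $\psi_{_{\beta,\Lambda_0}}\to-\infty$ because the quadratic term $-Wr^2\log(1/\beta)/2$ eventually dominates the bounded quantity $\mathcal{K}\zeta_{_{\beta,\Lambda_0}}$; and on $\partial D_2\cap D$ (at $r=1/(2\pi W)$) Lemmas~\ref{le1}--\ref{le2} give $\mathcal{K}\zeta_{_{\beta,\Lambda_0}}=O(1)$, since these points lie at a fixed positive distance from the concentration locus $\mathcal{C}_{r_*}$, whereas $\frac{Wr^2}{2}\log(1/\beta)+\mu_{_{\beta,\Lambda_0}}\gtrsim\log(1/\beta)$, so $\psi_{_{\beta,\Lambda_0}}<0$ there for $\beta$ small. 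The maximum principle on the unbounded region $D\setminus D_2$ then yields $\psi_{_{\beta,\Lambda_0}}\le 0$ throughout, completing the proof. The main obstacle is controlling $\mathcal{K}\zeta_{_{\beta,\Lambda_0}}$ on $\partial D_2\cap D$ and at infinity simultaneously, so that the maximum principle can be legitimately applied on the unbounded region; this is where the specific choice $D_2=\{0<r<1/(2\pi W)\}$ enters decisively, guaranteeing that $r_*$ lies strictly inside $D_2$ and that the support constraint plays no active role in the Euler--Lagrange analysis.
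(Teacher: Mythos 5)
Your proposal is correct and follows essentially the same route as the paper, which omits the details and simply invokes the argument of Proposition \ref{ad3} (the $\log\Lambda$ upper bound on $(\psi_{_{\beta,\Lambda}})_+$ versus the $\Lambda$-order lower bound on the saturation set) together with the maximum-principle step of Lemma \ref{lex1} to drop the $D_2$ cutoff. Your reconstruction of the whole-space analogues of Lemmas \ref{le4}--\ref{ad2} and the maximum-principle argument on $D\setminus D_2$ matches the intended proof, with only cosmetic differences (e.g.\ the paper's Lemma \ref{lex1} works on the complement of $\mathrm{supp}(\zeta_{_{\beta,\Lambda}})$ rather than of $D_2$).
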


As before, we now abbreviate $(\zeta_{_{\beta, \Lambda_0}},\psi_{_{\beta, \Lambda_0}}, \mu_{_{\beta, \Lambda_0}})$ to $(\zeta_{_{\beta}}, \psi_{_{\beta}}, \mu_{_{\beta}})$. Let $g_{_\beta}$ be defined as in Section 3. Set $r_*=1/(4\pi W)$.
\begin{proposition}\label{le20}
As $\beta \to 0^+$, we have
\begin{equation*}
\begin{split}
   E_\beta(\zeta_{_\beta}) & =(\frac{r_*}{4\pi}-\frac{Wr_*^2}{2})\log{\frac{1}{\beta}}+O(1),\\
   \mu_{_\beta} & =(\frac{r_*}{2\pi}-\frac{Wr_*^2}{2})\log{\frac{1}{\beta}}+O(1).
\end{split}
\end{equation*}
Also, there exists a constant $R_1>1$ independent of $\beta$ such that
 $$diam\big(supp(\zeta_{_\beta})\big)\le R_1\beta,$$
provided $\beta$ is small enough. Moreover,
$$\lim_{\beta \to 0^+}dist_{\mathcal{C}_{r_{_*}}}(supp(\zeta_{_\beta}))=0.$$
In addition, every accumulation point of the family $\{g_{_\beta}:\beta>0\}$ in the weak topology of $L^2$ must be a radially nonincreasing function.
\end{proposition}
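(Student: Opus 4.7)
The plan is to mirror the arguments of Sections 3 and 4, exploiting the fact that the artificial support constraint $supp(\zeta)\subseteq D_2=\{0<r<1/(2\pi W)\}$ built into the admissible class recovers compactness and keeps the Green-kernel estimates of Lemmas \ref{le1}--\ref{le2} uniformly applicable. Since $r_*=1/(4\pi W)$ is the unique maximizer of $\Gamma_1(t)=t/(2\pi)-Wt^2$ on $[0,1/(2\pi W)]$ and lies strictly in its interior, I expect the constraint to be inactive for small $\beta$ and the asymptotics to agree with the cylinder case of Theorem \ref{thm1}.

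First I would reproduce the chain Lemma \ref{le4} $\to$ Lemma \ref{ad1} $\to$ Lemma \ref{le5}: a test function $\tilde\zeta=\beta^{-2}\chi_{B_{\beta/\sqrt{a\pi}}((a,0))}$ with $a\in(0,1/(2\pi W))$ gives the lower bound $E_\beta(\zeta_{_\beta})\ge (a/(4\pi)-Wa^2/2)\log(1/\beta)-C$, and the Euler--Lagrange computation of Lemma \ref{ad1} combined with Proposition \ref{le5-2} and the Sobolev estimate \eqref{0-3} (to absorb the $(\Lambda/\beta^2)\int U_{\beta,\Lambda}\,d\nu$ term uniformly in $\beta$, $\Lambda$) yields the identity
\[
\mu_{_\beta}=2E_\beta(\zeta_{_\beta})+\frac{W}{2}\log\frac{1}{\beta}\int_D\zeta_{_\beta}\,r^2\,d\nu+O(1).
\]
Localization then proceeds verbatim from Lemma \ref{le6}: for $\gamma\in(0,1)$ and $(r_\beta,z_\beta)\in supp(\zeta_{_\beta})$, splitting the Green kernel at $\sigma=\beta^\gamma$ and using Lemma \ref{le2} gives
\[
\Gamma_1(r_\beta)\ge \Gamma_1(a)+\frac{W}{2}\Big(\int_D\zeta_{_\beta}\,r^2\,d\nu-r_\beta^2\Big)+o(1);
\]
specializing successively to $r_\beta=A_\beta$ and $r_\beta=B_\beta$ and letting $\beta\to 0^+$, $a\to r_*$, $\gamma\to 0^+$ forces $A_\beta,B_\beta\to r_*$, hence $dist_{\mathcal{C}_{r_*}}(supp(\zeta_{_\beta}))\to 0$ together with the coarse diameter bound $diam(supp(\zeta_{_\beta}))=O(\beta^\gamma)$ of Proposition \ref{le10}.

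With concentration at $r_*$ established, the sharp upper bound $E_\beta(\zeta_{_\beta})\le (r_*/(4\pi)-Wr_*^2/2)\log(1/\beta)+O(1)$ follows via Lemma~4.2 of \cite{Tur83} as in Lemma \ref{le11}, and combining it with the Lagrange identity produces the stated $O(1)$ asymptotics of $\mu_{_\beta}$. The linear diameter bound is the dichotomy of Lemma \ref{sharp}: splitting at $\sigma=R\beta$ with $R$ large and comparing the two resulting mass contributions against the sharp $\mu_{_\beta}$-estimate forces at least half the mass of $\zeta_{_\beta}$ to lie in a disk of radius $O(R\beta)$; together with $\zeta_{_\beta}\le\Lambda_0/\beta^2$ this yields a uniform bound $diam(supp(\zeta_{_\beta}))\le R_1\beta$. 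Finally, the weak-$L^2$ radial monotonicity of accumulation points of $g_{_\beta}$ is a verbatim transcription of Lemma \ref{le12}: compare $\zeta_{_\beta}$ with its Steiner rearrangement $\tilde\zeta_\beta$ around $X_\beta$ through $E_\beta(\tilde\zeta_\beta)\le E_\beta(\zeta_{_\beta})$, pass to the weak limit in Riesz's rearrangement inequality, and apply Lemma~3.2 of \cite{BG} combined with the centering identity $\int x\,g(x)\,dm_2=0$ to upgrade the resulting translation to the identity.

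The main obstacle is the noncompactness of the whole-space problem, which forced the artificial confinement $supp(\zeta)\subseteq D_2$ in the admissible class to begin with. The delicate point is checking \emph{a posteriori} that this constraint is not binding, so that the Euler--Lagrange relation \eqref{502} effectively reduces to the free form \eqref{503} with an honest multiplier $\mu_{_\beta}$ governing the boundary of the vortex core. This is automatic once the localization step is complete: the strict inequality $r_*<1/(2\pi W)$ and the $O(\beta^\gamma)$ diameter bound place $supp(\zeta_{_\beta})$ well inside $D_2$ for all small $\beta$, and every step above transfers from Section 3 without alteration.
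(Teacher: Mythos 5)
Your proposal is correct and follows essentially the same route as the paper, which deliberately omits the details of this proposition precisely because it is the Section~3 chain (energy lower bound, Lagrange-multiplier identity, kernel splitting at $\sigma=\beta^{\gamma}$, sharp bound via Lemma~4.2 of \cite{Tur83}, dichotomy at $\sigma=R\beta$, Riesz rearrangement plus Lemma~3.2 of \cite{BG}) transferred to $D_2$, with the key observation—also yours—that $r_*=1/(4\pi W)$ lies strictly inside $D_2$ so the support constraint is asymptotically inactive (via Proposition \ref{le5-2}) and the role of $d$ is played by $1/(2\pi W)$. The only cosmetic slip is calling the comparison profile in the last step a Steiner rearrangement: it is the symmetric radially nonincreasing (Schwarz) rearrangement $g^{\bigtriangleup}_{\beta}$ centered at $X_\beta$, exactly as in Lemma \ref{le12}.
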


\begin{proof}[Proof of Theorem \ref{thm3}]
Having made the above preparations, the proof is similar to Theorem \ref{thm1}; consequently, we omit them here.
\end{proof}

{\bf Acknowledgments.}
{ This work was supported by NNSF of China (No.11771469) and Chinese Academy of Sciences (
  No.QYZDJ-SSW-SYS021).
}

\phantom{s}
 \thispagestyle{empty}

\end{document}